\documentclass{article}

\usepackage[T2A]{fontenc}
\usepackage[utf8]{inputenc}
\usepackage[russian]{babel}
\usepackage{amsfonts}
\usepackage{amsthm}
\usepackage{amsmath}
\usepackage{amssymb}
\usepackage{cancel}
\usepackage{tabu}
\usepackage{array}
\usepackage{tikz}
\usepackage{hyperref}
\usepackage{makecell}
\usepackage{xcolor}
\usepackage{graphicx}
\usepackage[final]{pdfpages}

\urlstyle{same}

\graphicspath{ {./images/} }

\title{Перечисление путей в графе Юнга--Фибоначчи}

\author{В. Ю. Евтушевский}

\begin{document}

\maketitle


\tableofcontents

\newpage

\newtheorem{Lemma}{Лемма}

\newtheorem{Alg}{Алгоритм}

\newtheorem{Col}{Следствие}

\newtheorem{theorem}{Теорема}

\newtheorem{Def}{Определение}

\newtheorem{Prop}{Утверждение}

\newtheorem{Problem}{Задача}

\newtheorem{Zam}{Замечание}

\newtheorem{Oboz}{Обозначение}

\newtheorem{Ex}{Пример}

\newtheorem{Nab}{Наблюдение}

\renewcommand{\labelenumi}{\theenumi)}
\renewcommand{\labelenumii}{\arabic{enumii}$^\circ$}
\renewcommand{\labelenumiii}{\arabic{enumii}.\arabic{enumiii}$^\circ$}

\section{Введение}

Рассмотрим слова над алфавитом $\{1,2\}$ с данной суммой цифр $n$.
Как известно, их количество есть число Фибоначчи $F_{n+1}$
($F_0=0,F_1=1,F_{k+2}=F_{k+1}+F_k$), и это самая распространённая
комбинаторная
интерпретация чисел Фибоначчи. Также можно думать 
о разбиениях полосы $2\times n$ на домино $1\times 2$ и $2\times 1$,
сопоставляя двойки парам горизонтальных домино, а 
единицы вертикальным домино.

Введём на этом множестве слов частичный порядок: будем говорить,
что слово $x$ предшествует слову $y$, если после удаления 
общего суффикса в слове $y$ остаётся не меньше
двоек, чем в слове $x$ остаётся цифр. 

Это действительно частичный порядок, более того, 
соответствующее частично упорядоченное множество
является модулярной
решёткой, известной как решётка Юнга -- Фибоначчи.

\begin{center}
\includegraphics[width=12cm, height=10cm]{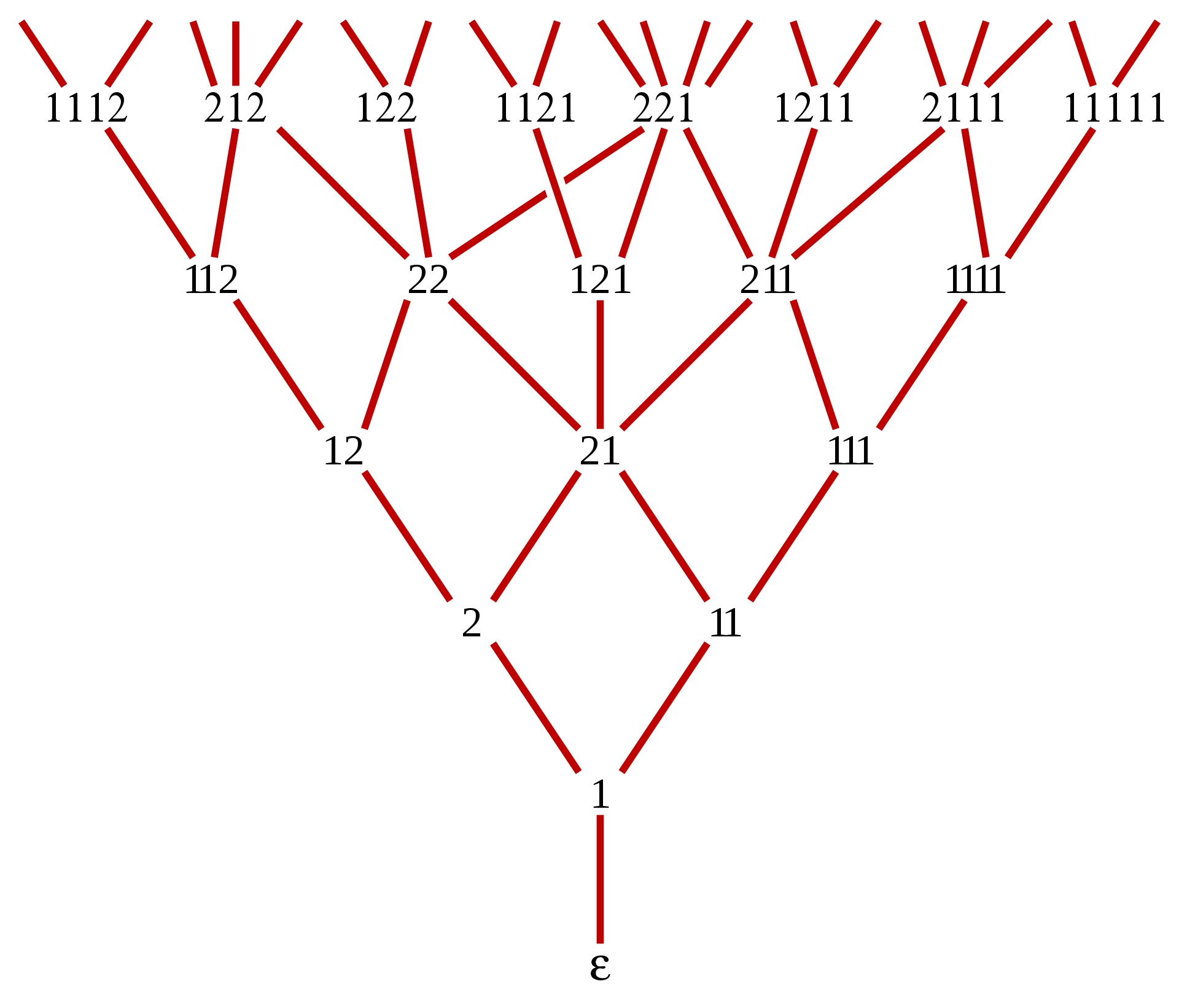}
\end{center}

Графом Юнга -- Фибоначчи (он изображён на рисунке выше) называют диаграмму Хассе этой
решётки. Это градуированный граф, который мы представляем
растущим снизу вверх начиная с пустого слова. 
Градуировкой служит функция суммы цифр. Опишем явно,
как устроены ориентированные
рёбра. Рёбра из данного
слова $x$ ведут в слова, получаемые из $x$ одной из двух операций:
\begin{enumerate}
    \item  заменить самую левую единицу на двойку;

    \item вставить единицу левее чем самая левая единица.
\end{enumerate}

Этот граф помимо модулярности является 1-дифференциальным, то есть
для каждой вершины исходящая степень на 1 превосходит 
входящую степень.

Изучение градуированного графа Юнга -- Фибоначчи было инициировано
в 1988 году одновременно и независимо такими математиками, как
Ричард Стенли \cite{St} и Сергей Владимирович Фомин \cite{Fo}.

Причина интереса к нему в том, что существует всего две 1-дифференциальных
модулярных решётки, вторая --- это решётка диаграмм Юнга, 
имеющая ключевое значение в теории представлений симметрической
группы. 

Центральные вопросы о градуированных графах касаются центральных мер на 
пространстве (бесконечных) путей в графе. Эта точка зрения
последовательно развивалась в работах Анатолия Моисеевича Вершика,
к недавнему обзору которого
\cite{Ver} и приводимой там литературе мы отсылаем читателя.

Среди центральных мер выделяют те, которые являются пределами 
мер, индуцированных путями в далёкие вершины --- так называемую
границу Мартина графа.

Граница пространства путей графа Юнга -- Фибоначчи изучалась в работе
Фредерика Гудмана и Сергея Васильевича Керова (2000) \cite{GK}.

Они использовали алгебраический формализм Окады \cite{Okada}.

Как следует из самого определения, 
асимптотический вопрос о границе напрямую связан с перечислительным
вопросом о числе путей между двумя вершинами графа. 
Отметим важную общую работу С. В. Фомина \cite{Fo1} о перечислении
путей в градуированных графах, в которой приводится
ряд общих тождеств и указывается связь помимо прочего с обобщением
алгоритма Робинсона -- Шенстеда -- Кнута .

Гудман и Керов обходятся без явных формул для числа путей, хотя, как
указал автору Павел Павлович Никитин, из
их рассуждений и можно их извлечь --- но количество слагаемых оказывается
экспоненциальным по длине меньшего из слов. 

В разделе 2 мы предлагаем явную
формулу с квадратичным (в худшем случае) числом слагаемых. 
В некотором смысле это также лучше, чем известная детерминантная формула
Фейта \cite{Feit}
для графа Юнга: в определителе, если его раскрыть очень много слагаемых.
Это обстоятельство позволяет рассчитывать на применимость
предлагаемых формул в асимптотических вопросах.

В разделе 3 приводится формула для числа путей между двумя вершинами,
если ходить можно в обоих направлениях. В ней кубическое (в худшем случае) число слагаемых. 

Подход, используемый в настоящей работе совершенно элементарен, большинство формулировок основаны
на изучении конкретных случаев и догадках, а доказательств --- на вложенной
индукции. 

\newpage
\section{Теорема о количестве путей ``вниз'' между двумя вершинами в графе Юнга-Фибоначчи}

\subsection{Подготовка к формулировке теоремы и её доказательству}

\begin{Oboz}
Пусть $\mathbb{YF}$ -- это граф Юнга-Фибоначчи.
\end{Oboz}

\begin{Def}
Пусть $x \in \mathbb{YF}$. Тогда номером $x$ будем называть слово из единиц и двоек, соответствующее вершине $x$.
\end{Def}

\begin{Oboz}
Если $x\in \mathbb{YF}$ и номер $x$ -- это $\alpha_1\alpha_2...\alpha_n$, где $\alpha_i \in \{1,2 \}$, то будем писать, что $x=\alpha_1\alpha_2...\alpha_n$.
\end{Oboz}

\begin{Oboz}
Если $x \in \mathbb{YF}$, то сумму цифр в номере $x$ обозначим за $|x|$.
\end{Oboz}

\begin{Zam}
$|x|$ -- это ранг вершины $x$ в графе Юнга-Фибоначчи.
\end{Zam}

\begin{Def}\label{vniz}
Пусть $x,y \in \mathbb{YF}: |y|\ge |x|$. Тогда путь 
$$y=y_0y_1y_2...y_n=x$$
в графе Юнга-Фибоначчи назовём $yx$-путём ``вниз'', если $\forall i \in \{0,...,n\} \;|y_i|=|y|-i$. Количество $yx$-путей ``вниз'' в $\mathbb{YF}$ будем обозначать как $d(x,y)$.
\end{Def}

Наша задача в этом параграфе заключается в нахождении $d(x,y)$ для любой пары $x,y\in\mathbb{YF}$.

\begin{Zam}
В Определении \ref{vniz} $n=|y|-|x|$.
\end{Zam}

\begin{Oboz}
Пусть $x \in \mathbb{YF}$. Тогда:
\begin{itemize} 
    \item Количество цифр в номере $x$ обозначим за $\#x$;
    \item Количество двоек в номере $x$ обозначим за $d(x)$;
    \item Количество двоек до первой единицы в номере $x$ обозначим за $d'(x)$;
    \item Множество предков $x$ обозначим за $R(x)$, множество потомков $x$ обозначим за $L(x)$, количество предков $x$ обозначим за $l(x)=|L(x)|$, количество потомков $x$ обозначим за $r(x)=|R(x)|$. 
\end{itemize}
\end{Oboz}    
    
\begin{Zam}
При $x \in \mathbb{YF}$ выполняется одно из двух условий:
\begin{enumerate}
    \item $x$ содержит хотя бы одну единицу, и $r(x)=d'(x)+1$;
    \item $x$ не содержит ни одной единицы, и $r(x)=d'(x)$.
\end{enumerate}
\end{Zam}

\begin{Oboz}
При $x \in \mathbb{YF}$ вершины из $R(x)$ обозначим следующим образом: 
\begin{itemize}
    \item Вершину с номером, получающимся заменой первой двойки на единицу, обозначим за $x_1$;
    \item Вершину с номером, получающимся заменой второй двойки на единицу, обозначим за $x_2$;
    \item ...
    \item Вершину с номером, получающимся заменой $d'(x)$-ой двойки на единицу, обозначим за $x_{d'(x)}$;
    \item Вершину с номером, получающимся удалением первой единицы (если она есть), обозначим за $x_{d'(x)+1}$.
\end{itemize}
\end{Oboz}

\begin{Def}
При $x \in \mathbb{YF}$ нижней функцией для $x$ назовём функцию
$$f(x,y,z) \quad (y \in \{0,...,|x|\}, \; z\in \{0,...,\#x\}),$$
определённую следующим образом:

При $z=0$:
\begin{itemize}
\item Если $x\in \mathbb{YF}$ представляется в виде $x=\alpha_1...\alpha_m\alpha_{m+1}...\alpha_n$, где $|\alpha_{m+1}...\alpha_n|=y, \alpha_i \in \{1,2 \}$, то
$$f(x,y,0):=\frac{1}{(\alpha_{m+1})(\alpha_{m+1}+\alpha_{m+2})...(\alpha_{m+1}+...+\alpha_{n})}\cdot(-1)^{n-m}\cdot$$
$$\cdot\frac{1}{\alpha_m(\alpha_m+\alpha_{m-1})(\alpha_m+\alpha_{m-1}+\alpha_{m-2})...(\alpha_m+...+\alpha_1)}=$$
$$=\frac{1}{(-\alpha_{m+1})(-\alpha_{m+1}-\alpha_{m+2})...(-\alpha_{m+1}-...-\alpha_{n})}\cdot$$
$$\cdot\frac{1}{(\alpha_m)(\alpha_m+\alpha_{m-1})(\alpha_m+\alpha_{m-1}+\alpha_{m-2})...(\alpha_m+...+\alpha_1)};$$
\item Если $x\in \mathbb{YF}$ не представляется в виде $x=\alpha_1...\alpha_m\alpha_{m+1}...\alpha_n$, где  $|\alpha_{m+1}...\alpha_n|=y,$ $\alpha_i \in \{1,2 \}$, то
$$f(x,y,0)=0.$$
\end{itemize}

При $z>0$ (рекурсивное определение):
\begin{itemize}
    \item Если $y=0$, то
$$f(x1,0,z)=f(x1,0,0);$$
    \item Если $y>0$, то
$$f(x1,y,z)=f(x1,y,0)+f(x,y-1,z-1);$$
    \item 
\begin{equation*}
f(x2,y,z)= 
 \begin{cases}
   $$\frac{f(x11,y,z+1)}{1-y}$$ &\text {если $y \ne 1$}\\
   0 &\text{если $y=1$.}
 \end{cases}
\end{equation*}
\end{itemize}
\end{Def}

\begin{Ex} \label{exf1}
Построение нижней функции $f(x,y,z)$ для $x=21221$ при $z=0$:

\begin{itemize}
\item $$f(21221,0,0)=\frac{1}{1\cdot3\cdot5\cdot6\cdot8}=\frac{1}{720};$$
\item $$f(21221,1,0)=\frac{1}{(-1)\cdot2\cdot4\cdot5\cdot7}=-\frac{1}{280};$$
\item $$f(21221,2,0)=0;$$
\item $$f(21221,3,0)=\frac{1}{(-2)\cdot(-3)\cdot2\cdot3\cdot5}=\frac{1}{180};$$
\item $$f(21221,4,0)=0;$$
\item $$f(21221,5,0)=\frac{1}{(-2)\cdot(-4)\cdot(-5)\cdot1\cdot3}=-\frac{1}{120};$$
\item $$f(21221,6,0)=\frac{1}{(-1)\cdot(-3)\cdot(-5)\cdot(-6)\cdot2}=\frac{1}{180};$$
\item $$f(21221,7,0)=0;$$
\item $$f(21221,8,0)=\frac{1}{(-2)\cdot(-3)\cdot(-5)\cdot(-7)\cdot(-8)}=-\frac{1}{1680}.$$
\end{itemize}
\end{Ex}

\begin{Ex} \label{exf2}
Значения $f(x,y,z)$ при $|x| \le 4$:

\begin{itemize}
    \item $x=\varepsilon$
\begin{center}
\begin{tabular}{ | m{0.5cm} || m{0.5cm} | } 
  \hline
 & $y=0$ \\
  \hline \hline
$z=0$ & $1$ \\ 
  \hline
\end{tabular}
\end{center}
    \item $x=1$
\begin{center}
\begin{tabular}{ | m{0.5cm} || m{0.5cm} | m{0.5cm} | } 
  \hline
 & $y=0$ & $y=1$ \\
  \hline \hline
$z=0$ & $1$ & $-1$ \\
  \hline
$z=1$ & $1$ & $0$ \\ 
  \hline
\end{tabular}
\end{center}
    \item $x=2$
\begin{center}
\begin{tabular}{ | m{0.5cm} || m{0.5cm} | m{0.5cm} | m{0.5cm} | } 
  \hline
 & $y=0$ & $y=1$ & $y=2$ \\
  \hline \hline
$z=0$ & $\frac{1}{2}$ & $0$ & $-\frac{1}{2}$\\
  \hline
$z=1$ & $\frac{1}{2}$ & $0$ &$-\frac{1}{2}$ \\ 
  \hline
\end{tabular}
\end{center}
    \item $x=11$
\begin{center}
\begin{tabular}{ | m{0.5cm} || m{0.5cm} | m{0.5cm} | m{0.5cm} | } 
  \hline
 & $y=0$ & $y=1$ & $y=2$ \\
  \hline \hline
$z=0$ & $\frac{1}{2}$ & $-1$ & $\frac{1}{2}$\\
  \hline
$z=1$ & $\frac{1}{2}$ & $0$ &$-\frac{1}{2}$ \\ 
  \hline
$z=2$ & $\frac{1}{2}$ & $0$ & $\frac{1}{2}$\\ 
  \hline
\end{tabular}
\end{center}
    \item $x=12$
\begin{center}
\begin{tabular}{ | m{0.5cm} || m{0.5cm} | m{0.5cm} | m{0.5cm} | m{0.5cm} | } 
  \hline
 & $y=0$ & $y=1$ & $y=2$ & $y=3$\\
  \hline \hline
$z=0$ & $\frac{1}{6}$ & $0$ & $-\frac{1}{2}$ &$\frac{1}{3}$\\
  \hline
$z=1$ & $\frac{1}{6}$ & $0$ &$-\frac{1}{2}$ &$\frac{1}{3}$\\ 
  \hline
$z=2$ & $\frac{1}{6}$ & $0$ & $-\frac{1}{2}$ &$-\frac{1}{6}$\\ 
  \hline
\end{tabular}
\end{center}
    \item $x=21$
\begin{center}
\begin{tabular}{ | m{0.5cm} || m{0.5cm} | m{0.5cm} | m{0.5cm} | m{0.5cm} | } 
  \hline
 & $y=0$ & $y=1$ & $y=2$ & $y=3$\\
  \hline \hline
$z=0$ & $\frac{1}{3}$ & $-\frac{1}{2}$ & $0$ &$\frac{1}{6}$\\
  \hline
$z=1$ & $\frac{1}{3}$ & $0$ &$0$ &$-\frac{1}{3}$\\ 
  \hline
$z=2$ & $\frac{1}{3}$ & $0$ & $0$ &$-\frac{1}{3}$\\ 
  \hline
\end{tabular}
\end{center}
    \item $x=111$
\begin{center}
\begin{tabular}{ | m{0.5cm} || m{0.5cm} | m{0.5cm} | m{0.5cm} | m{0.5cm} | } 
  \hline
 & $y=0$ & $y=1$ & $y=2$ & $y=3$\\
  \hline \hline
$z=0$ & $\frac{1}{6}$ & $-\frac{1}{2}$ & $\frac{1}{2}$ &$-\frac{1}{6}$\\
  \hline
$z=1$ & $\frac{1}{6}$ & $0$ &$-\frac{1}{2}$ &$\frac{1}{3}$\\ 
  \hline
$z=2$ & $\frac{1}{6}$ & $0$ & $\frac{1}{2}$ &$-\frac{2}{3}$\\ 
  \hline
$z=3$ & $\frac{1}{6}$ & $0$ & $\frac{1}{2}$ &$\frac{1}{3}$\\ 
  \hline
\end{tabular}
\end{center}
    \item $x=112$
\begin{center}
\begin{tabular}{ | m{0.5cm} || m{0.5cm} | m{0.5cm} | m{0.5cm} | m{0.5cm} | m{0.5cm} | } 
  \hline
 & $y=0$ & $y=1$ & $y=2$ & $y=3$ & $y=4$\\
  \hline \hline
$z=0$ & $\frac{1}{24}$ & $0$ & $-\frac{1}{4}$ &$\frac{1}{3}$ &$-\frac{1}{8}$\\
  \hline
$z=1$ & $\frac{1}{24}$ & $0$ &$-\frac{1}{4}$ &$\frac{1}{3}$ &$-\frac{1}{8}$\\ 
  \hline
$z=2$ & $\frac{1}{24}$ & $0$ & $-\frac{1}{4}$ &$-\frac{1}{6}$ &$\frac{5}{24}$\\ 
  \hline
$z=3$ & $\frac{1}{24}$ & $0$ & $-\frac{1}{4}$ &$-\frac{1}{6}$ &$-\frac{1}{8}$\\ 
  \hline
\end{tabular}
\end{center}
    \item $x=22$
\begin{center}
\begin{tabular}{ | m{0.5cm} || m{0.5cm} | m{0.5cm} | m{0.5cm} | m{0.5cm} | m{0.5cm} | } 
  \hline
 & $y=0$ & $y=1$ & $y=2$ & $y=3$ & $y=4$\\
  \hline \hline
$z=0$ & $\frac{1}{8}$ & $0$ & $-\frac{1}{4}$ & $0$ &$\frac{1}{8}$\\
  \hline
$z=1$ & $\frac{1}{8}$ & $0$ & $-\frac{1}{4}$ & $0$ &$\frac{1}{8}$\\
  \hline
$z=2$ & $\frac{1}{8}$ & $0$ & $-\frac{1}{4}$ & $0$ &$\frac{1}{8}$\\
  \hline
\end{tabular}
\end{center}
\item $x=121$
\begin{center}
\begin{tabular}{ | m{0.5cm} || m{0.5cm} | m{0.5cm} | m{0.5cm} | m{0.5cm} | m{0.5cm} | } 
  \hline
 & $y=0$ & $y=1$ & $y=2$ & $y=3$ & $y=4$\\
  \hline \hline
$z=0$ & $\frac{1}{12}$ & $-\frac{1}{6}$ & $0$ &$\frac{1}{6}$ &$-\frac{1}{12}$\\
  \hline
$z=1$ & $\frac{1}{12}$ & $0$ &$0$ &$-\frac{1}{3}$ &$\frac{1}{4}$\\ 
  \hline
$z=2$ & $\frac{1}{12}$ & $0$ & $0$ &$-\frac{1}{3}$ &$\frac{1}{4}$\\
  \hline
$z=3$ & $\frac{1}{12}$ & $0$ & $0$ &$-\frac{1}{3}$ &$-\frac{1}{4}$\\
  \hline
\end{tabular}
\end{center}
\item $x=211$
\begin{center}
\begin{tabular}{ | m{0.5cm} || m{0.5cm} | m{0.5cm} | m{0.5cm} | m{0.5cm} | m{0.5cm} | } 
  \hline
 & $y=0$ & $y=1$ & $y=2$ & $y=3$ & $y=4$\\
  \hline \hline
$z=0$ & $\frac{1}{8}$ & $-\frac{1}{3}$ & $\frac{1}{4}$ & $0$ &$-\frac{1}{24}$\\
  \hline
$z=1$ & $\frac{1}{8}$ & $0$ &$-\frac{1}{4}$ &$0$ &$\frac{1}{8}$\\ 
  \hline
$z=2$ & $\frac{1}{8}$ & $0$ & $\frac{1}{4}$ &$0$ &$-\frac{3}{8}$\\ 
  \hline
$z=3$ & $\frac{1}{8}$ & $0$ & $\frac{1}{4}$ &$0$ &$-\frac{3}{8}$\\ 
  \hline
\end{tabular}
\end{center}
\item $x=1111$
\begin{center}
\begin{tabular}{ | m{0.5cm} || m{0.5cm} | m{0.5cm} | m{0.5cm} | m{0.5cm} | m{0.5cm} | } 
  \hline
 & $y=0$ & $y=1$ & $y=2$ & $y=3$ & $y=4$\\
  \hline \hline
$z=0$ & $\frac{1}{24}$ & $-\frac{1}{6}$ & $\frac{1}{4}$ &$-\frac{1}{6}$ &$\frac{1}{24}$\\
  \hline
$z=1$ & $\frac{1}{24}$ & $0$ &$-\frac{1}{4}$ &$\frac{1}{3}$ &$-\frac{1}{8}$\\ 
  \hline
$z=2$ & $\frac{1}{24}$ & $0$ & $\frac{1}{4}$ &$-\frac{2}{3}$ &$\frac{3}{8}$\\ 
  \hline
$z=3$ & $\frac{1}{24}$ & $0$ & $\frac{1}{4}$ &$\frac{1}{3}$ &$-\frac{5}{8}$\\ 
  \hline
$z=4$ & $\frac{1}{24}$ & $0$ & $\frac{1}{4}$ &$\frac{1}{3}$ &$\frac{3}{8}$\\ 
  \hline
\end{tabular}
\end{center}
\end{itemize}
\end{Ex}

Примеры \ref{exf1} и \ref{exf2} могут помочь понять некоторые переходы в доказательстве ближайшей теоремы. 

\begin{Def}
При $x \in \mathbb{YF}$ верхней функцией для $x$ назовём функцию
$$g(x,y) \quad (y \in \{1,...,d(x)\}),$$
определённую следующим образом:

Рассмотрим представление $x$ в виде $$x=\underbrace{1...1}_{\beta_{d(x)}}2\underbrace{1...1}_{\beta_{d(x)-1}}2...2\underbrace{1...1}_{\beta_1}2\underbrace{1...1}_{\beta_0}$$
и определим:
\begin{itemize}
    \item $g(x,1)=\beta_0+1;$
    \item $g(x,2)=\beta_0+\beta_1+3;$
    \item ...
    \item $g(x,m)=\beta_0+...+\beta_{m-1}+2m-1;$
    \item ...
    \item $g(x,d(x))=\beta_0+...+\beta_{d(x)-1}+2d(x)-1.$
\end{itemize}

\end{Def}

\begin{Oboz}
Пусть $x,y \in \mathbb{YF}$. Тогда вершину, номер которой -- это конкатенация номеров $x$ и $y$, обозначим за $xy$. 
\end{Oboz}

\begin{Oboz}
Пусть $x,y \in \mathbb{YF}$. Тогда максимальное $z\in\mathbb{N}_0$: $\exists x',y',z' \in \mathbb{YF}:\quad x=x'z'$, $y=y'z'$, $\#z'=z$, обозначим за $h(x,y)$.
\end{Oboz}

\begin{Zam}
Если $x,y \in \mathbb{YF}$, то $h(x,y)$ -- это количество цифр в самом длинном общем суффиксе $x$ и $y$. Ясно, что $h(x,y)=h(y,x)$.
\end{Zam}

Итак, мы готовы к тому, чтобы сформулировать и доказать теорему о количестве путей ``вниз'' между двумя данными вершинами в графе Юнга-Фибоначчи.

\subsection{Формулировка теоремы и первые утверждения для её доказательства}

\begin{theorem} \label{TH1}
Пусть $x,y \in \mathbb{YF}$: $|y| \ge |x|$ и $z=h(x,y)$. Тогда 
$$d(x,y)=\sum_{i=0}^{|x|}\left( {f\left(x,i,z\right)}\prod_{j=1}^{d(y)}\left(g\left(y,j\right)-i\right)\right).$$ 
\end{theorem}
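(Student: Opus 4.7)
The natural strategy is induction on $|y|$, starting with the base case $|y|=|x|$ (where $d(x,y)$ is $1$ or $0$) and using the downward recursion $d(x,y) = \sum_{y'\in R(y)} d(x,y')$ for the inductive step. The formula becomes much more transparent once one recognizes that $f(x,\cdot,0)$ is a Lagrange/partial-fraction weight: setting $A_k = \alpha_1 + \cdots + \alpha_k$ so that $0 = A_0 < A_1 < \cdots < A_n = |x|$, the product form for $f(x,i,0)$ collapses (after matching the $(-1)^{n-m}$ against the right product) to
\[
f(x,i,0) = \prod_{j\ne m}(A_m-A_j)^{-1} \quad\text{when $|x|-i=A_m$,}
\]
and $f(x,i,0)=0$ otherwise, as Example~\ref{exf1} confirms directly. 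Consequently $\sum_i f(x,i,0)P(i)$ is the $n$th divided difference of $t\mapsto P(|x|-t)$ at the nodes $A_0,\ldots,A_n$; it vanishes when $\deg P<n$ and equals the leading coefficient when $\deg P=n$. This observation is the backbone for both the base case and the bookkeeping in the step.

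For the base case with $y=x$ (so $z=\#x$), I would unfold the recursion $f(x'1,i,z)=f(x'1,i,0)+f(x',i-1,z-1)$ iteratively, together with its counterpart $f(x'2,i,z)=f(x'11,i,z+1)/(1-i)$ (which, upon one application of the first rule to $x'11=(x'1)1$, reduces to a relation involving only shorter words), thereby writing $f(x,i,\#x)$ as a telescoping combination of $z=0$ values on successive suffix-truncations of $x$. Applying the divided-difference interpretation at each level reduces $d(x,x)=1$ to a matching of leading coefficients; and for $y\ne x$ with $|y|=|x|$ the sum vanishes by a direct degree count. These verifications will themselves be an inner induction on $\#x$.

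The inductive step in $|y|$ requires the identity
\[
\sum_{i=0}^{|x|} f(x,i,z)\prod_{j=1}^{d(y)}(g(y,j)-i) \;=\; \sum_{y'\in R(y)}\sum_{i=0}^{|x|} f(x,i,h(x,y'))\prod_{j=1}^{d(y')}(g(y',j)-i).
\]
For those ancestors $y'$ with $h(x,y')=z$ (i.e.\ the ``leading $2\to 1$'' operations acting strictly above the common suffix, and generically the ``delete leftmost $1$'' operation) the identity reduces to a purely polynomial identity in $i$ about how the multiset $\{g(y,j)\}$ decomposes under a single ancestor step — a telescoping/discrete-difference statement checkable by direct manipulation of the $\beta$-decomposition. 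The main obstacle, and the very reason the parameter $z$ appears in $f$, is the case where the modification sits adjacent to the common suffix and produces $h(x,y')>z$: the corresponding contribution uses $f(x,i,z+1)$, and reconciling this with $f(x,i,z)$ on the left is precisely the content of the recursion $f(x'1,i,z)=f(x'1,i,0)+f(x',i-1,z-1)$, whose second term encodes the suffix-extending ancestor after an index shift. The singular denominator $1-i$ in the $f(\cdot 2,\cdot,\cdot)$ rule must then be matched against a simultaneous vanishing of the appropriate $g(y,j)-i$ factor at $i=1$. Carrying this balance through every structural configuration of $y$ (whether the first $1$ lies inside or outside the common suffix, presence or absence of leading $2$'s, etc.) is where most of the technical work lies, and will require the nested induction that the author has announced.
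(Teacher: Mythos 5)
Your plan has the same architecture as the paper's proof: an outer induction on $|y|$ with base case $|y|=|x|$, and for the inductive step the branching identity
$\sum_i f(x,i,z)\prod_j(g(y,j)-i)=\sum_{y_k\in R(y)}\sum_i f(x,i,h(x,y_k))\prod_j(g(y_k,j)-i)$,
which is the summed form of the paper's Lemma~\ref{1key}. Your reformulation of $f(x,i,0)$ as the Lagrange weight $\prod_{j\ne m}(A_m-A_j)^{-1}$ at the node $A_m=|x|-i$ of the partial-sum grid $0=A_0<\dots<A_{\#x}=|x|$ is correct (it reproduces Example~\ref{exf1}) and is a genuine improvement for part of the argument: it turns $\sum_i f(x,i,0)P(i)$ into a divided difference of order $\#x$, so Proposition~\ref{baza} becomes the case $P\equiv 1$, and Lemma~\ref{zhopa} follows from $d(y)\le |y|/2\le |x|/2<\#x$ (strict because $x$ ends in a~$1$) --- two statements the paper instead proves by multi-case inductions.

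The gap is that the remainder is announced rather than proved, and that remainder is where essentially all of the paper's work for this theorem lies. The branching identity is reduced in your text to ``a telescoping/discrete-difference statement checkable by direct manipulation'' for the generic ancestors plus ``precisely the content of the recursion'' for the suffix-adjacent one; in the paper this is an eight-case induction on $|x|$ containing further inner inductions, e.g.\ the sub-claim $(1-i)f(x,i,0)=f\left(x,i,z_{r(y)}\right)$ when $y$ consists of twos and $x$ ends in a $1$ (itself a four-subcase induction), and the $x=x'2$ configurations where the $1/(1-i)$ singularity must first be traded for $f(x'11,i,\cdot)$ via Proposition~\ref{211} before the $i=1$ term can be discarded --- none of which follows formally from what you wrote. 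Likewise, in the base case with $x\ne y$ and $z>0$, unfolding the recursion introduces denominators $\prod_{j\le d(a)}(g(x,j)-i)$ (cf.\ Наблюдение~\ref{nab1}), so your ``direct degree count'' only applies after one checks that these cancel against the matching factors of $\prod_j(g(y,j)-i)$ coming from twos inside the common suffix; that verification, and the leading-coefficient bookkeeping for $d(x,x)=1$, are not supplied. In short: right skeleton, one correct and clarifying new observation that would shorten the paper's auxiliary lemmas, but the central identity of Lemma~\ref{1key} and the $z>0$ base cases remain unproved.
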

\begin{proof}
Обозначим предполагаемый ответ за $o(x,y)$.

Для начала докажем несколько вспомогательных утверждений:

\begin{Prop} \label{z0}
Пусть $x \in \mathbb{YF}$. Тогда:
\begin{enumerate}
    \item Если $y \in \{1,...,|x1| \}$, то $-yf(x1,y,0)=f(x,y-1,0);$
    \item Если $y \in \{0,...,|x2| \}$, то $(1-y)f(x11,y,0)=f(x2,y,0);$
    \item Если $y \in \{0,...,|x|\}, \; \alpha_0 \in\{1,2\}$, то  $(|\alpha_0x|-y)f(\alpha_0x,y,0)=f(x,y,0).$
\end{enumerate}
\end{Prop}
\begin{proof}
$\;$
\begin{enumerate}
    \item
Рассмотрим два случая:    
\begin{enumerate}
    \item Пусть $x$ представляется в виде $x=\alpha_1...\alpha_m\alpha_{m+1}...\alpha_n$, где  $|\alpha_{m+1}...\alpha_n|=y-1, \; \alpha_i \in \{1,2 \}$.

Тогда $x1$ представляется в виде
$x1=\alpha_1...\alpha_m\alpha_{m+1}...\alpha_n1$, где $|\alpha_{m+1}...\alpha_n1|=y.$ Посчитаем, пользуясь определением формулы $f$:
$$f(x,y-1,0)=\frac{1}{(-\alpha_{m+1})(-\alpha_{m+1}-\alpha_{m+2})...(-\alpha_{m+1}-...-\alpha_{n})}\cdot$$
$$\cdot\frac{1}{(\alpha_m)(\alpha_m+\alpha_{m-1})(\alpha_m+\alpha_{m-1}+\alpha_{m-2})...(\alpha_m+...+\alpha_1)}=$$
$$=\frac{1}{(-\alpha_{m+1})(-\alpha_{m+1}-\alpha_{m+2})...(-\alpha_{m+1}-...-\alpha_{n})}\cdot\frac{-y}{(-\alpha_{m+1}-...-\alpha_n-1)}\cdot$$
$$\cdot\frac{1}{(\alpha_m)(\alpha_m+\alpha_{m-1})(\alpha_m+\alpha_{m-1}+\alpha_{m-2})...(\alpha_m+...+\alpha_1)}=-yf(x1,y,0).$$

\item Пусть $x$ не представляется в виде $x=\alpha_1...\alpha_m\alpha_{m+1}...\alpha_n$, где $|\alpha_{m+1}...\alpha_n|=y-1, \; \alpha_i \in \{1,2 \}$.

Тогда $x1$ не представляется в виде
$x1=\alpha_1...\alpha_m\alpha_{m+1}...\alpha_n1$, где $|\alpha_{m+1}...\alpha_n1|=y, \; \alpha_i \in \{1,2 \}$. Очевидно, что
$$f(x,y-1,0)=0=-yf(x1,y,0).$$
\end{enumerate}

    \item
Рассмотрим четыре случая:    
\begin{enumerate}
    \item Пусть $y\ge 2$ и $x2$ представляется в виде $x2=\alpha_1...\alpha_m\alpha_{m+1}...\alpha_n2$, где  $|\alpha_{m+1}...\alpha_n2|=y, \; \alpha_i \in \{1,2 \}$.

В данном случае понятно, что $x11$ представляется в виде
$x11=\alpha_1...\alpha_m\alpha_{m+1}...\alpha_n11$, где $|\alpha_{m+1}...\alpha_n11|=y, \; \alpha_i \in \{1,2 \}$. Посчитаем, пользуясь определением формулы $f$:
$$f(x2,y,0)=$$
$$=\frac{1}{(-\alpha_{m+1})(-\alpha_{m+1}-\alpha_{m+2})...(-\alpha_{m+1}-...-\alpha_{n})(-\alpha_{m+1}-...-\alpha_{n}-2)}\cdot$$
$$\cdot\frac{1}{(\alpha_m)(\alpha_m+\alpha_{m-1})(\alpha_m+\alpha_{m-1}+\alpha_{m-2})...(\alpha_m+...+\alpha_1)}=$$
$$=\frac{1}{(-\alpha_{m+1})(-\alpha_{m+1}-\alpha_{m+2})...(-\alpha_{m+1}-...-\alpha_{n})}\cdot$$
$$\cdot\frac{1-y}{(-\alpha_{m+1}-...-\alpha_n-1)}\cdot\frac{1}{(-\alpha_{m+1}-...-\alpha_n-2)}\cdot$$
$$\cdot\frac{1}{(\alpha_m)(\alpha_m+\alpha_{m-1})(\alpha_m+\alpha_{m-1}+\alpha_{m-2})...(\alpha_m+...+\alpha_1)}=$$
$$=(1-y)f(x11,y,0).$$

\item Пусть $y \ge 2$ и $x2$ не представляется в виде $x2=\alpha_1...\alpha_m\alpha_{m+1}...\alpha_n2$, где $|\alpha_{m+1}...\alpha_n2|=y, \; \alpha_i \in \{1,2 \}$.

В данном случае понятно, что $x11$ не представляется в виде
$x11=\alpha_1...\alpha_m\alpha_{m+1}...\alpha_n1$, где $|\alpha_{m+1}...\alpha_n11|=y, \; \alpha_i \in \{1,2 \}$. Очевидно, что
$$f(x11,y,0)=0=f(x2,y,0)\Longrightarrow (1-y)f(x11,y,0)=0=f(x2,y,0).$$

\item Пусть $y = 1$.

В данном случае понятно, что $x2$ не представляется в виде $x2=\alpha_1...\alpha_m\alpha_{m+1}...\alpha_n2$, где $|\alpha_{m+1}...\alpha_n2|=1, \; \alpha_i \in \{1,2 \}$, поэтому по определению функции $f$ 
$$f(x2,1,0)=0.$$

Очевидно, что $$(1-1)f(x11,1,0)=0,$$
поэтому в данном случае равенство доказано.

\item Пусть $y=0$.
    
В данном случае рассмотрим представление $x2$ в виде $x2=\alpha_1...\alpha_n2$, где $\alpha_i \in \{1,2 \}$.

Тогда $x11$ представляется в виде
$x11=\alpha_1...\alpha_n11$. 
    
Ясно, что данном представлении формула для $f$ принимает вид
$$f(x2,0,0)=$$
$$=\frac{1}{2(2+\alpha_n)(2+\alpha_n+\alpha_{n-1})(2+\alpha_n+\alpha_{n-1}+\alpha_{n-2})...(2+\alpha_n+...+\alpha_1)}=$$
$$=\frac{1}{1\cdot2(2+\alpha_n)(2+\alpha_n+\alpha_{n-1})(2+\alpha_n+\alpha_{n-1}+\alpha_{n-2})...(2+\alpha_n+...+\alpha_1)}=$$
$$=f(x11,0,0)\cdot\frac{1}{(\alpha_m)(\alpha_m+\alpha_{m-1})(\alpha_m+\alpha_{m-1}+\alpha_{m-2})...(\alpha_m+...+\alpha_1)}=$$
$$=(1-y)f(x11,y,0).$$
\end{enumerate}

\item Рассмотрим два случая:

\begin{enumerate}
    \item Пусть $x$ представляется в виде $x=\alpha_1...\alpha_m\alpha_{m+1}...\alpha_n$, где $|\alpha_{m+1}...\alpha_n|=y, \; \alpha_i \in \{1,2 \}$.

В данном случае понятно, что $\alpha_0 x$ представляется в виде
$\alpha_0x=\alpha_0 \alpha_1...\alpha_m\alpha_{m+1}...\alpha_n$, где $|\alpha_{m+1}...\alpha_n|=y.$ Посчитаем:
$$f(x,y,0)=\frac{1}{(-\alpha_{m+1})(-\alpha_{m+1}-\alpha_{m+2})...(-\alpha_{m+1}-...-\alpha_{n})}\cdot$$
$$\cdot\frac{1}{(\alpha_m)(\alpha_m+\alpha_{m-1})(\alpha_m+\alpha_{m-1}+\alpha_{m-2})...(\alpha_m+...+\alpha_1)}=$$
$$=\frac{1}{(-\alpha_{m+1})(-\alpha_{m+1}-\alpha_{m+2})...(-\alpha_{m+1}-...-\alpha_{n})}\cdot$$
$$\cdot\frac{1}{(\alpha_m)(\alpha_m+\alpha_{m-1})(\alpha_m+\alpha_{m-1}+\alpha_{m-2})...(\alpha_m+...+\alpha_1)}\cdot\frac{|\alpha_0x|-y}{(\alpha_{m}+...+\alpha_1+\alpha_0)}=$$
$$=\left(|\alpha_0x|-y\right)f\left(\alpha_0 x,y,0\right).$$

\item Пусть $x$ не представляется в виде $x=\alpha_1...\alpha_m\alpha_{m+1}...\alpha_n$, где $|\alpha_{m+1}...\alpha_n|=y, \; \alpha_i \in \{1,2 \}$.

В данном случае понятно, что $\alpha_0x$ не представляется в виде
$\alpha_0x=\alpha_0\alpha_1...\alpha_m\alpha_{m+1}...\alpha_n$, где $|\alpha_{m+1}...\alpha_n|=y, \; \alpha_i \in \{1,2 \}$. Очевидно,
$$f(x,y,0)=0=\left(|\alpha_0x|-y\right)f\left(\alpha_0 x,y,0\right).$$
\end{enumerate}
\end{enumerate}
\end{proof}

\begin{Lemma}\label{dim}
Пусть $a\in \mathbb{Z}$, $b,c\in \mathbb{N}_0$: $c \le b$. Тогда
$$\prod_{d=0}^{b}\left( a+2d\right)-\sum_{e = 0}^{c}\left(\prod_{d=0}^{b-e-1}\left(a+2d\right)\prod_{d=b-e}^{b-1}\left(a+2d+1\right)\right)=\prod_{d=0}^{b-c-1}\left(a+2d\right)\prod_{d=b-c-1}^{b-1}\left(a+2d+1\right).$$
\end{Lemma}
\begin{proof}
Зафиксируем $a$ и $b$, после чего будем доказывать лемму по индукции по $c$.

\underline{\textbf{База}}: $c=0$.

Просто посчитаем:
$$\prod_{d=0}^{b}\left( a+2d\right)-\prod_{d=0}^{b-1}\left(a+2d\right)=\left(\prod_{d=0}^{b-1}\left(a+2d\right)\right)\left(a+2b-1 \right)=\prod_{d=0}^{b-1}\left(a+2d\right)\prod_{d=b-1}^{b-1}\left(a+2d+1\right).$$
База доказана.

\underline{\textbf{Переход}} к $c+1\ge1$.

Посчитаем, помня, что $c+1 \le b$:
$$\prod_{d=0}^{b}\left( a+2d\right)-\sum_{e = 0}^{c+1}\left(\prod_{d=0}^{b-e-1}\left(a+2d\right)\prod_{d=b-e}^{b-1}\left(a+2d+1\right)\right)=$$
$$\prod_{d=0}^{b}\left( a+2d\right)-\sum_{e = 0}^{c}\left(\prod_{d=0}^{b-e-1}\left(a+2d\right)\prod_{d=b-e}^{b-1}\left(a+2d+1\right)\right)-\prod_{d=0}^{b-c-2}\left(a+2d\right)\prod_{d=b-c-1}^{b-1}\left(a+2d+1\right)=$$
$$=\text{(по предположению индукции)}=$$
$$=\prod_{d=0}^{b-c-1}\left(a+2d\right)\prod_{d=b-c-1}^{b-1}\left(a+2d+1\right)-\prod_{d=0}^{b-c-2}\left(a+2d\right)\prod_{d=b-c-1}^{b-1}\left(a+2d+1\right)=$$
$$=\left(\prod_{d=0}^{b-c-2}\left(a+2d\right)\right)\left(a+2b-2c-3 \right)\prod_{d=b-c-1}^{b-1}\left(a+2d+1\right)=\prod_{d=0}^{b-c-2}\left(a+2d\right)\prod_{d=b-c-2}^{b-1}\left(a+2d+1\right).$$
\end{proof} 
\begin{Col}\label{prom}
Пусть $a\in \mathbb{Z}$, $b\in \mathbb{N}_0$. Тогда
$$\prod_{d=0}^{b}\left( a+2d\right)=\sum_{e = 0}^{b+1}\left(\prod_{i=0}^{b-e-1}\left(a+2d\right)\prod_{d=b-e}^{b-1}\left(a+2d+1\right)\right).$$
\end{Col}
\begin{proof}
$$\prod_{d=0}^{b}\left( a+2d\right)-\sum_{e = 0}^{b+1}\left(\prod_{d=0}^{b-e-1}\left(a+2d\right)\prod_{d=b-e}^{b-1}\left(a+2d+1\right)\right)=$$
$$=\prod_{d=0}^{b}\left( a+2d\right)-\sum_{e = 0}^{b}\left(\prod_{d=0}^{b-e-1}\left(a+2d\right)\prod_{d=b-e}^{b-1}\left(a+2d+1\right)\right)-\prod_{d=0}^{-2}\left(a+2d\right)\prod_{d=-1}^{b-1}\left(a+2d+1\right)=$$
$$=\text{(по Лемме \ref{dim})}=$$
$$ =\prod_{d=0}^{-1}\left(a+2d\right)\prod_{d=-1}^{b-1}\left(a+2d+1\right)-\prod_{d=0}^{-2}\left(a+2d\right)\prod_{d=-1}^{b-1}\left(a+2d+1\right)=0.$$
\end{proof}
\begin{Col} \label{main}
Пусть $y\in \mathbb{YF}$ содержит хотя бы одну единицу. Тогда $\forall i \in \mathbb{N}_0$
$$\prod_{j=1}^{d(y)}\left(g\left(y,j\right)-i\right)=\sum_{k = 1}^{r(y)} \left( \prod_{j=1}^{d(y_k)}\left(g\left(y_k,j\right)-i\right)\right).$$
\end{Col}

\begin{proof}
Знаем, что $y$ начинается с $d'(y)$ двоек. Заметим, что в данном случае $r(y)=d'(y)+1$. 

\setlength{\unitlength}{0.20mm}
\begin{picture}(400,200)
\put(250,160){$y=2221...$}
\put(300,150){\vector(2,-1){200}}
\put(300,150){\vector(1,-2){50}}
\put(300,150){\vector(-1,-2){50}}
\put(300,150){\vector(-2,-1){200}}
\put(50,30){$y_1=1221...$}
\put(180,30){$y_2=2121...$}
\put(310,30){$y_3=2211...$}
\put(440,30){$y_4=222...$}
\end{picture}

На рисунке выше изображено, как выглядят предки вершины, номер которой содержит хотя бы одну единицу. Таким образом, замечаем, что
$$\prod_{j=1}^{d(y)}\left(g\left(y,j\right)-i\right)=\prod_{j=1}^{d(y)-d'(y)}\left(g\left(y,j\right)-i\right)\prod_{j=0}^{d'(y)-1}\left(g\left(y,d(y)-d'(y)+1\right)-i+2j\right).$$
(тут в правой части первое произведение соответствует двойкам внутри многоточия на рисунке, а второе соответствует двойкам до первой единицы).
$$\prod_{j=1}^{d(y_k)}\left(g\left(y_k,j\right)-i\right)=\prod_{j=1}^{d(y)-d'(y)}\left(g\left(y,j\right)-i\right)\cdot$$
$$\cdot\prod_{j=0}^{d'(y)-k-1}\left(g\left(y,d(y)-d'(y)+1\right)-i+2j\right)\prod_{j=d'(y)-k}^{d'(y)-2}\left(g\left(y,d(y)-d'(y)+1\right)-i+2j+1\right).$$
(тут при каждом $k$ в правой части первое произведение соответствует двойкам внутри многоточия на рисунке, второе соответствует двойкам после первой единицы, но до многоточия на рисунке, а третье соответствует двойкам до первой единицы на рисунке).

Таким образом, нам достаточно доказать, что
$$\prod_{j=0}^{d'(y)-1}\left(g\left(y,d(y)-d'(y)+1\right)-i+2j\right)=$$ 
$$=\sum_{k = 1}^{d'(y)+1}\left(\prod_{j=0}^{d'(y)-k-1}\left(g\left(y,d(y)-d'(y)+1\right)-i+2j\right)\prod_{j=d'(y)-k}^{d'(y)-2}\left(g\left(y,d(y)-d'(y)+1\right)-i+2j+1\right)\right).$$
Это равносильно следующему равенству: 
$$\prod_{d=0}^{d'(y)-1}\left(g\left(y,d(y)-d'(y)+1\right)-i+2d\right)=$$ 
$$=\sum_{e = 0}^{d'(y)}\left(\prod_{d=0}^{d'(y)-e-2}\left(g\left(y,d(y)-d'(y)+1\right)-i+2d\right)\prod_{d=d'(y)-e-1}^{d'(y)-2}\left(g\left(y,d(y)-d'(y)+1\right)-i+2d+1\right)\right).$$
А это как раз Следствие \ref{prom} при $b=d'(y)-1$ и $a=g\left(y,d(y)-d'(y)+1\right)-i.$
\end{proof}

\begin{Col} \label{y000}
Пусть $x,y \in \mathbb{YF},$ $|y|>|x|$, $z=h(x,y)$, $\forall k\in\{1,...,r(y)\}\quad z_k=h(x,y_k)$, и при этом номер $y$ содержит хотя бы одну единицу. Тогда
$${f\left(x'1,0,z\right)}\prod_{j=1}^{d(y)}g\left(y,j\right)=\sum_{k = 1}^{r(y)}\left({f\left(x'1,0,z_k\right)}\prod_{j=1}^{d(y_k)}g\left(y_k,j\right)\right).$$
\end{Col}
\begin{proof}
Посчитаем, пользуясь рекурсивной формулой для $f$:
$${f\left(x'1,0,z\right)}\prod_{j=1}^{d(y)}g\left(y,j\right)=\sum_{k = 1}^{r(y)} \left({f\left(x'1,0,z_k\right)}\prod_{j=1}^{d(y_k)}g\left(y_k,j\right)\right) \Longleftarrow$$
$$\Longleftarrow \text{(По Утверждению \ref{y01})} \Longleftarrow\prod_{j=1}^{d(y)}g\left(y,j\right)=\sum_{k = 1}^{r(y)} \prod_{j=1}^{d(y_k)}g\left(y_k,j\right).$$
А это Следствие \ref{main}.
\end{proof}

\begin{Prop}\label{y01}
Пусть $x\in\mathbb{YF}$. Тогда
\begin{enumerate}
    \item $\forall z_1,z_2\in\{0,...,\#x\} \quad f(x,0,z_1)=f(x,0,z_2);$ \label{p1}
    \item $\forall z \in\{1,...,\#x\} \quad f(x,1,z)=0;$
    \item если $\exists x':x=x'2$, то $f(x,1,0)=0.$
\end{enumerate}
\end{Prop}

\begin{proof}
$\;$

\begin{enumerate}
    \item Рассмотрим три случая:
    \begin{enumerate}
        \item Пусть $\exists x':x=x'1$.
        
        В данном случае по рекурсивному определению функции $f$
        $$f(x'1,0,z_1)=(x'1,0,0)=f(x'1,0,z_2).$$
        
        \item Пусть $\exists x':x=x'2$.
        
        В данном случае по рекурсивному определению функции $f$
        $$f(x'2,0,z_1)=f(x'11,0,z_1+1)=f(x'11,0,0)=f(x'11,0,z_2+1)=f(x'2,0,z_2).$$
        
        \item $x=\varepsilon$ 
        
        Данный случай тривиален.
        
    \end{enumerate}
    
    \item Рассмотрим два случая:
    \begin{enumerate}
        \item Пусть $\exists x':x=x'1$.
        
        В данном случае по Утверждению \ref{z0} $f(x,1,0)=-f(x',0,0)$. Пользуясь этой мыслью и рекурсивной формулой для $f$, считаем:
        $$f(x,1,z)=f(x,1,0)+f(x',0,z-1)=-f(x',0,0)+f(x',0,z-1)=$$
        $$=\text{(по пункту \ref{p1})}=-f(x',0,0)+f(x,0,0)=0.$$

        \item Пусть $\exists x':x=x'2$.
        
        В данном случае по определению функции $f$ 
        $$f(x'2,1,z)=0.$$
    \end{enumerate}
    \item В данном случае по определению функции $f$ 
        $$f(x'2,1,0)=0.$$
\end{enumerate}
\end{proof}

\begin{Prop} \label{211}
Пусть $x\in \mathbb{YF}, \;i \in \{0,...,|x'|+2\}$. Тогда
$$f(x'2,i,0)=f(x'11,i,1).$$
\end{Prop}
\begin{proof}
Рассмотрим $3$ случая:
\renewcommand{\labelenumi}{\arabic{enumi}$^\circ$}
\begin{enumerate}
    \item Пусть $i=0$.
    
    В данном случае по определению функции $f$
    $$f(x'2,0,0)=\frac{f(x'11,0,1)}{1-0}=f(x'11,0,1).$$
    \item Пусть $i=1$.
    
    В данном случае по определению функции $f$ и Утверждению \ref{y01}
    $$f(x'2,1,0)=0=f(x'11,1,1).$$
    \item Пусть $i>1$.
    
    В данном случае по определению функции $f$ 
    $$f(x'11,i,1) = f(x'11,i,0)+f(x'1,i-1,0) = \text{(По Утверждению \ref{z0})}=f(x'11,i,0)\left(1-i\right)=f(x'2,i,0).$$
\end{enumerate}
\end{proof}

\begin{Prop} \label{i1}
    Пусть $x,y \in \mathbb{YF},$ $|y|\ge|x|$, $z=h(x,y)$. Тогда
    $${f\left(x,1,z\right)}\prod_{j=1}^{d(y)}\left(g\left(y,j\right)-1\right)=0.$$
\end{Prop}
\begin{proof}
По Утверждению \ref{y01} во всех случаях, кроме случая, в котором  $z=0$ номер и $x$ оканчивается на единицу, $f(x,1,z)=0$, то есть лемма доказана. Давайте разберём единственный ``плохой'' вариант.

В данном случае из того, что $z=0$, и номер $x$ оканчивается на единицу, следует, что номер $y$ оканчивается на двойку, а значит $g(y,1)=1$, то есть в этом случае лемма тоже доказана.
\end{proof}

\subsection{Первая ключевая лемма}

\begin{Lemma} \label{1key}
Пусть $x,y \in \mathbb{YF},$ $|y|>|x|$: $\forall k$ $z_k=h(x,y_k)$. Тогда $\forall i$
$${f\left(x,i,z\right)}\prod_{j=1}^{d(y)}\left(g\left(y,j\right)-i\right)=\sum_{k = 1}^{r(y)} \left({f\left(x,i,z_k\right)}\prod_{j=1}^{d(y_k)}\left(g\left(y_k,j\right)-i\right)\right).$$ 
\end{Lemma}
\begin{proof}
Докажем Лемму по индукции по $|x|$:

\underline{\textbf{Базу}}: $|x|=0$ проверим в конце.

\underline{\textbf{Переход}}: к $|x|\ge 1$.

Рассмотрим восемь случаев:
\renewcommand{\labelenumi}{\arabic{enumi}$^\circ$}
\renewcommand{\labelenumii}{\arabic{enumi}.\arabic{enumii}$^\circ$}
\begin{enumerate}
    \item Пусть $i=1.$
    
    Заметим, что в данном случае $\forall k \in\{1,...r(y)\} \quad |y_k|\ge|x|$ (так как $|y|>|x|$ и $|y_k|=|y|-1$), а значит, по Утверждению \ref{i1}, применённому к левой части и к каждому слагаемому правой части,
    $${f\left(x,1,z\right)}\prod_{j=1}^{d(y)}\left(g\left(y,j\right)-1\right)=0=\sum_{k = 1}^{r(y)} \left({f\left(x,1,z_k\right)}\prod_{j=1}^{d(y_k)}\left(g\left(y_k,j\right)-1\right)\right).$$ 

    \item Пусть $i\ne 1$ и $\exists x',y': x=x'1 \; y=y'2$, причём номер $y$ содержит хотя бы одну единицу или $\exists x',y': x=x'2, \; y=y'1$, причём номер $y$ содержит хотя бы две единицы.

\setlength{\unitlength}{0.20mm}
\begin{picture}(400,200)
\put(250,160){$y=2221...2$}
\put(300,150){\vector(2,-1){200}}
\put(300,150){\vector(1,-2){50}}
\put(300,150){\vector(-1,-2){50}}
\put(300,150){\vector(-2,-1){200}}
\put(50,30){$y_1=1221...2$}
\put(180,30){$y_2=2121...2$}
\put(310,30){$y_3=2211...2$}
\put(440,30){$y_4=222...2$}
\end{picture}

\setlength{\unitlength}{0.20mm}
\begin{picture}(400,200)
\put(250,160){$y=2221...1$}
\put(300,150){\vector(2,-1){200}}
\put(300,150){\vector(1,-2){50}}
\put(300,150){\vector(-1,-2){50}}
\put(300,150){\vector(-2,-1){200}}
\put(50,30){$y_1=1221...1$}
\put(180,30){$y_2=2121...1$}
\put(310,30){$y_3=2211...1$}
\put(440,30){$y_4=222...1$}
\end{picture}

На рисунках выше изображено, как могут выглядеть предки вершины $y$. Замечаем, что в обоих случаях номера $x$ и $y$ оканчиваются на разные цифры, а, значит, $z=0$. Также замечаем, что в обоих случаях $\forall i\in\{1,...,r(y)\}$ номера $y_i$ и $x$ оканчиваются на разные цифры, а значит, $z_i=0$. Таким образом, мы хотим доказать, что 
$${f\left(x,i,0\right)}\prod_{j=1}^{d(y)}\left(g\left(y,j\right)-i\right)=\sum_{k = 1}^{r(y)} \left({f\left(x,i,0\right)}\prod_{j=1}^{d(y_k)}\left(g\left(y_k,j\right)-i\right)\right) \Longleftarrow$$
$$\Longleftarrow \prod_{j=1}^{d(y)}\left(g\left(y,j\right)-i\right)=\sum_{k = 1}^{r(y)} \left( \prod_{j=1}^{d(y_k)}\left(g\left(y_k,j\right)-i\right)\right).$$
А это Следствие \ref{main}.

\item Пусть $i \ne 1$ и $\exists x': x=x'1$, причём номер $y$ состоит только из двоек:
\setlength{\unitlength}{0.20mm}
\begin{picture}(400,200)
\put(250,160){$y=2222$}
\put(300,150){\vector(2,-1){200}}
\put(300,150){\vector(1,-2){50}}
\put(300,150){\vector(-1,-2){50}}
\put(300,150){\vector(-2,-1){200}}
\put(50,30){$y_1=1222$}
\put(180,30){$y_2=2122$}
\put(310,30){$y_3=2212$}
\put(440,30){$y_4=2221$}
\end{picture}

На рисунке выше изображено, как могут выглядеть предки вершины $y$. 
Замечаем, что номер $y$ оканчивается на $2$, а значит  $z=0$. Также замечаем, что $\forall k \in \{1,...,r(y)-1  \} \quad$ номер $y_k$ оканчивается на $2$, а значит  $z_k=0$. И ещё замечаем, что номер $y_{r(y)}$ оканчивается на $1$, то есть $z_{r(y)}\ge 1$. Таким образом, мы хотим доказать, что 
$${f\left(x,i,0\right)}\prod_{j=1}^{d(y)}\left(g\left(y,j\right)-i\right)=\sum_{k = 1}^{r(y)-1} \left({f\left(x,i,0\right)}\prod_{j=1}^{d(y_k)}\left(g\left(y_k,j\right)-i\right)\right)+f\left(x,i,z_{r(y)}\right)\prod_{j=1}^{d(y_{r(y)})}\left(g\left(y_{r(y)},j\right)-i\right).$$
Заметим, что если номер $y$ не содержит ни одной единицы, то $g(y,j)=2j-1$. Таким образом,
$${f\left(x,i,0\right)}\prod_{j=1}^{d(y)}\left(g\left(y,j\right)-i\right)={f\left(x,i,0\right)}\prod_{j=1}^{d(y)}\left(2j-1-i\right).$$
Также заметим, что если $y$ состоит из всех двоек, то $\forall k \in \{1,...,d(y)\} \quad r(y)=d(y)=d(y_k)+1$, и $y_k$ имеет вид 
$$\underbrace{2...2}_{k-1}1\underbrace{2...2}_{d(y)-k},$$
а значит если $j \in \{1,...,d(y)-k\}$, то $g(y_k,j)=2j-1$, а если $j \in \{d(y)-k+1,...,d(y)-1\}$, то $g(y_k,j)=2j$. Таким образом, при $1\le k \le r(y)-1$
$${f\left(x,i,0\right)}\prod_{j=1}^{d(y_k)}\left(g\left(y_k,j\right)-i\right)=f\left(x,i,0\right)\prod_{j=1}^{d(y)-k}\left(2j-1-i\right)\prod_{j=d(y)-k+1}^{d(y)-1}\left(2j-i\right).$$
Таким образом,
$${f\left(x,i,0\right)}\prod_{j=1}^{d(y)}\left(g\left(y,j\right)-i\right)-\sum_{k = 1}^{r(y)-1} \left({f\left(x,i,0\right)}\prod_{j=1}^{d(y_k)}\left(g\left(y_k,j\right)-i\right)\right)=$$
$$={f\left(x,i,0\right)}\left(\prod_{j=1}^{d(y)}\left(2j-1-i\right) - \sum_{k=1}^{d(y)-1}\left( \prod_{j=1}^{d(y)-k}\left(2j-1-i\right)\prod_{j=d(y)-k+1}^{d(y)-1}\left(2j-i\right) \right)\right)=$$
$$=(d:=j-1; e:=k-1)=$$
$$={f\left(x,i,0\right)}\left(\prod_{d=0}^{d(y)-1}\left(2d+1-i\right) - \sum_{e=0}^{d(y)-2}\left( \prod_{d=0}^{d(y)-k-2}\left(2d+1-i\right)\prod_{j=d(y)-k-1}^{d(y)-2}\left(2j+2-i\right) \right)\right)=$$
$$=\text{(по лемме \ref{dim} при $a=-i+1;$ $b=d(y)-1;$ $c=d(y)-2$)}=$$
$$={f\left(x,i,0\right)}\prod_{d=0}^{0}\left(2d+1-i\right)\prod_{j=1}^{d(y)-2}\left(2d+2-i\right)=$$
$$={f\left(x,i,0\right)}\prod_{j=1}^{1}\left(2j-1-i\right)\prod_{j=2}^{d(y)-1}\left(2j-i\right)={f\left(x,i,0\right)}\left(1-i\right)\prod_{j=1}^{d(y)-1}\left(2j-i\right).$$

Таким образом, наше равенство равносильно следующему равенству: 
$${f\left(x,i,0\right)}\left(1-i\right)\prod_{j=1}^{d(y)-1}\left(2j-i\right)=f\left(x,i,z_{r(y)}\right)\prod_{j=1}^{d(y_{r(y)})}\left(2j-i\right) \Longleftarrow \left(1-i\right){f\left(x,i,0\right)}=f\left(x,i,z_{r(y)}\right).$$

Давайте докажем, что $\left(1-i\right){f\left(x,i,0\right)}=f\left(x,i,z_{r(y)}\right)$ при $x=x'1$ по индукции по $|x'|$.

\underline{\textbf{База}}: $|x'|=0$ (то есть $x=1$).

Знаем, что при $x=1$ единственными $f(x,y,z)$ являются $f(1,0,0)=1$, $f(1,1,0)=-1$, $f(1,0,1)=1$, $f(1,1,1)=0$. Поэтому из того, что $z_{y(r)} \ge 1$ делаем вывод, что $z_{y(r)}=1$.

Рассмотрим два случая:
\renewcommand{\labelenumii}{\roman{enumii}$^\circ$}
\begin{enumerate}
    \item Пусть $i=0$.
    
    В данном случае равенство принимает вид $(1-0)f(1,0,0)=f(1,0,1)$, а это равняется единице с обеих сторон.
    \item Пусть $i=1$.
    
    В данном случае равенство принимает вид $(1-1)f(1,1,0)=f(1,1,1)$, а это равняется нулю с обеих сторон.
\end{enumerate}

\underline{\textbf{Переход}} к $x: \;|x'| \ge 1 \Longleftrightarrow |x| \ge 2$.

Рассмотрим два случая:
\begin{enumerate}
    \item Пусть $\exists x'' :x=x''11$.

Заметим, что в данном случае, $z_{y(r)}=1$. 

Рассмотрим ещё два подслучая:

\renewcommand{\labelenumiii}{\roman{enumii}.\roman{enumiii}$^\circ$}

\begin{enumerate}
    \item Пусть $i>0.$
    
Посчитаем, пользуясь рекурсивной формулой для $f$:
$$\left(1-i\right){f\left(x'1,i,0\right)}=f\left(x'1,i,1\right)\Longleftrightarrow$$$$\Longleftrightarrow \left(1-i\right){f\left(x'1,i,0\right)}=f\left(x'1,i,0\right)+f\left(x',i-1,0\right) \Longleftrightarrow$$
$$\Longleftrightarrow -i{f\left(x'1,i,0\right)}=f\left(x',i-1,0\right).$$
А это верно по Утверждению \ref{z0}.

    \item Пусть $i=0.$
    
    В данном случае равенство принимает вид $$(1-0)f(x,0,0)=f(x,0,1),$$
    а это верно по Утверждению \ref{y01}.
\end{enumerate}

\item Пусть $\exists x'': x=x''21$.

Заметим, что в данном случае $z_{y(r)} \ge 2 $.

Рассмотрим четыре подслучая:

\begin{enumerate}
    \item Пусть $i \ge 3$. 

Посчитаем, несколько раз пользуясь рекурсивной формулой для $f$:
$$ \left(1-i\right){f\left(x''21,i,0\right)}=f\left(x''21,i,z_{y(r)}\right) \Longleftrightarrow$$
$$\Longleftrightarrow \left(1-i\right){f\left(x''21,i,0\right)}=f\left(x''2,i-1,z_{y(r)}-1\right)+f\left(x''21,i,0\right) \Longleftrightarrow$$
$$\Longleftrightarrow -i{f\left(x''21,i,0\right)}=f\left(x''2,i-1,z_{y(r)}-1\right) \Longleftrightarrow$$
$$\Longleftrightarrow \text{(По Утверждению \ref{z0})}\Longleftrightarrow$$
$$\Longleftrightarrow {f\left(x''2,i-1,0\right)}=f\left(x''2,i-1,z_{y(r)}-1\right) \Longleftrightarrow$$
$$\Longleftrightarrow(2-i){f\left(x''11,i-1,0\right)}=\frac{1}{2-i}f\left(x''11,i-1,z_{y(r)}\right) \Longleftrightarrow$$
$$\Longleftrightarrow (2-i){f\left(x''11,i-1,0\right)}=\frac{1}{2-i}f\left(x''11,i-1,0\right)+\frac{1}{2-i}f\left(x''1,i-2,z_{y(r)}-1\right) \Longleftrightarrow$$
$$\Longleftrightarrow (i^2-4i+3){f\left(x''11,i-1,0\right)}=f\left(x''1,i-2,z_{y(r)}-1\right) \Longleftrightarrow$$
$$\Longleftrightarrow\text{(По Утверждению \ref{z0})}\Longleftrightarrow$$
$$\Longleftrightarrow \frac{(i^2-4i+3)}{-i+1}{f\left(x''1,i-2,0\right)}=f\left(x''1,i-2,z_{y(r)}-1\right) \Longleftrightarrow$$
$$\Longleftrightarrow (-i+3){f\left(x''1,i-2,0\right)}=f\left(x''1,i-2,z_{y(r)}-1\right).$$

\item Пусть $i=0.$

В данном случае равенство принимает вид
$$(1-0)f(x''21,0,0)=f(x''21,0,z_{y(r)}).$$ 
А это верно по Утверждению \ref{y01}.

\item Пусть $i=1.$

В данном случае равенство принимает вид
$$(1-1)f(x''21,1,0)=f(x''21,1,z_{y(r)}).$$
А это верно по Утверждению \ref{y01}.

\item Пусть $i=2$.

В данном случае по формуле для функции $f$
$$(1-2)f(x''21,2,0)=0.$$
Также по формуле для функции $f$
$$f(x''21,2,z_{y(r)})=f(x''21,2,0)+f(x''2,1,z_{y(r)}-1)=$$
$$=f(x''2,1,z_{y(r)}-1)=\text{(по Утверждению \ref{y01}})=0.$$
Таким образом, в данном случае 
$$(1-2)f(x''21,2,0)=0=f(x''21,2,z_{y(r)}).$$
Что и требовалось.
\end{enumerate}

\end{enumerate}

\item \label{3} Пусть $i \ne 1$ и $\exists x',y': x=x'1, \;y=y'1$, причём номер $y$ содержит хотя бы две единицы.

Рассмотрим два подслучая:
\renewcommand{\labelenumii}{\arabic{enumi}.\arabic{enumii}$^\circ$}
\begin{enumerate}
    \item Пусть $i>0$.

\setlength{\unitlength}{0.20mm}
\begin{picture}(400,200)
\put(250,160){$y=2221...1$}
\put(300,150){\vector(2,-1){200}}
\put(300,150){\vector(1,-2){50}}
\put(300,150){\vector(-1,-2){50}}
\put(300,150){\vector(-2,-1){200}}
\put(50,30){$y_1=1221...1$}
\put(180,30){$y_2=2121...1$}
\put(310,30){$y_3=2211...1$}
\put(440,30){$y_4=222...1$}
\end{picture}

На рисунке выше изображено, как могут выглядеть предки вершины $y$. 
Замечаем, что номер $y$ оканчивается на $1$, а значит  $z\ge 1$. Также замечаем, что $\forall k \in \{1,...,r(y)\} \quad$ номер $y_k$ оканчивается на $1$, а значит  $z_i\ge 1$. Воспользуемся рекурсивной формулой для $f$:
$${f\left(x'1,i,z\right)}\prod_{j=1}^{d(y)}\left(g\left(y,j\right)-i\right)=\sum_{k = 1}^{r(y)} \left({f\left(x'1,i,z_k\right)}\prod_{j=1}^{d(y_k)}\left(g\left(y_k,j\right)-i\right)\right) \Longleftrightarrow$$
$$\Longleftrightarrow {f\left(x'1,i,0\right)}\prod_{j=1}^{d(y)}\left(g\left(y,j\right)-i\right)+{f\left(x',i-1,z-1\right)}\prod_{j=1}^{d(y)}\left(g\left(y,j\right)-i\right)=$$
$$=\sum_{k = 1}^{r(y)} \left({f\left(x'1,i,0\right)}\prod_{j=1}^{d(y_k)}\left(g\left(y_k,j\right)-i\right)\right)+\sum_{k = 1}^{r(y)} \left({f\left(x',i-1,z_k-1\right)}\prod_{j=1}^{d(y_k)}\left(g\left(y_k,j\right)-i\right)\right).$$

Заметим, что по Следствию \ref{main}
$$ \prod_{j=1}^{d(y)}\left(g\left(y,j\right)-i\right)=\sum_{k = 1}^{r(y)} \prod_{j=1}^{d(y_k)}\left(g\left(y_k,j\right)-i\right)\Longrightarrow$$
$$\Longrightarrow{f\left(x'1,i,0\right)}\prod_{j=1}^{d(y)}\left(g\left(y,j\right)-i\right)=\sum_{k = 1}^{r(y)} \left({f\left(x'1,i,0\right)}\prod_{j=1}^{d(y_k)}\left(g\left(y_k,j\right)-i\right)\right).$$

Таким образом, наше утверждение равносильно следующему:
$${f\left(x',i-1,z-1\right)}\prod_{j=1}^{d(y)}\left(g\left(y,j\right)-i\right)=\sum_{k = 1}^{r(y)
} \left({f\left(x',i-1,z_k-1\right)}\prod_{j=1}^{d(y_k)}\left(g\left(y_k,j\right)-i\right)\right).$$

Рассмотрим $y'$ и $x'$. Пусть $z'_k=h(x',y'_k)$, а $z'=h(x',y')$. Сразу понятно, что из того, что номера $x$ и $y$ оканчиваются на $1$, следует, что $z'=z-1$.

\setlength{\unitlength}{0.20mm}
\begin{picture}(400,200)
\put(250,160){$y=2221...1$}
\put(300,150){\vector(2,-1){200}}
\put(300,150){\vector(1,-2){50}}
\put(300,150){\vector(-1,-2){50}}
\put(300,150){\vector(-2,-1){200}}
\put(50,30){$y_1=1221...1$}
\put(180,30){$y_2=2121...1$}
\put(310,30){$y_3=2211...1$}
\put(440,30){$y_4=222...1$}
\end{picture}

\setlength{\unitlength}{0.20mm}
\begin{picture}(400,200)
\put(250,160){$y'=2221...\xcancel{1}$}
\put(300,150){\vector(2,-1){200}}
\put(300,150){\vector(1,-2){50}}
\put(300,150){\vector(-1,-2){50}}
\put(300,150){\vector(-2,-1){200}}
\put(50,30){$y'_1=1221...\xcancel{1}$}
\put(180,30){$y'_2=2121...\xcancel{1}$}
\put(310,30){$y'_3=2211...\xcancel{1}$}
\put(440,30){$y'_4=222...\xcancel{1}$}
\end{picture}

На рисунках выше изображено, как могут выглядеть предки вершин $y$ и $y'$.

Заметим, что в данном случае $r(y')=r(y)$ и $\forall k \in \{1,...,r(y)\}$ $y_k=y'_k1$, а значит, что $z'_k=z_k-1$.

Теперь мы готовы воспользоваться предположением индукции при $x'$, $y'$ и $i'=i-1$. Воспользуемся:
$${f\left(x',i',z'\right)}\prod_{j=1}^{d(y')}\left(g\left(y',j\right)-i'\right)=\sum_{k = 1}^{r(y')} \left({f\left(x',i',z'_k\right)}\prod_{j=1}^{d(y'_k)}\left(g\left(y'_k,j\right)-i'\right)\right) \Longleftrightarrow$$
$$ \Longleftrightarrow {f\left(x',i-1,z-1\right)}\prod_{j=1}^{d(y)}\left(g\left(y',j\right)-i+1\right)=\sum_{k = 1}^{r(y)} \left({f\left(x',i-1,z_k-1\right)}\prod_{j=1}^{d(y_k)}\left(g\left(y'_k,j\right)-i+1\right)\right) \Longleftrightarrow$$
$$ \Longleftrightarrow {f\left(x',i-1,z-1\right)}\prod_{j=1}^{d(y)}\left(g\left(y,j\right)-i\right)=\sum_{k = 1}^{r(y)} \left({f\left(x',i-1,z_k-1\right)}\prod_{j=1}^{d(y_k)}\left(g\left(y_k,j\right)-i\right)\right).$$
Что и требовалось.

\item \label{threetwo} Пусть $i=0$.

В данном случае утверждение Леммы мгновенно следует из Следствия \ref{y000}.

\end{enumerate}

\item Пусть $i \ne 1$ и $\exists x',y': x=x'1, \;y=y'1$, причём номер $y$ содержит ровно одну единицу.

Рассмотрим два случая:
\renewcommand{\labelenumii}{\arabic{enumi}.\arabic{enumii}$^\circ$}
\begin{enumerate}
    \item Пусть $i>0.$

\begin{picture}(400,200)
\put(280,160){$y=2221$}
\put(300,150){\vector(2,-1){200}}
\put(300,150){\vector(1,-2){50}}
\put(300,150){\vector(-1,-2){50}}
\put(300,150){\vector(-2,-1){200}}
\put(50,30){$y_1=1221$}
\put(200,30){$y_2=2121$}
\put(310,30){$y_3=2211$}
\put(470,30){$y_4=222$}
\end{picture}

На рисунке выше изображено, как могут выглядеть предки вершины $y$. 
Замечаем, что номер $y$ оканчивается на $1$, а значит  $z\ge 1$. Также замечаем, что $\forall k \in \{1,...,r(y)-1  \}$ номер $y_k$ оканчивается на $1$, а значит  $z_k\ge 1$. И ещё замечаем, что номер $y_{r(y)}$ оканчивается на $2$, то есть $z_{r(y)}=0$. Посчитаем, пользуясь рекурсивной формулой для $f$:
$${f\left(x'1,i,z\right)}\prod_{j=1}^{d(y)}\left(g\left(y,j\right)-i\right)=\sum_{k = 1}^{r(y)} \left({f\left(x'1,i,z_k\right)}\prod_{j=1}^{d(y_k)}\left(g\left(y_k,j\right)-i\right)\right) \Longleftrightarrow$$
$$\Longleftrightarrow {f\left(x'1,i,0\right)}\prod_{j=1}^{d(y)}\left(g\left(y,j\right)-i\right)+{f\left(x',i-1,z-1\right)}\prod_{j=1}^{d(y)}\left(g\left(y,j\right)-i\right)=$$
$$=\sum_{k = 1}^{r(y)-1} \left({f\left(x'1,i,0\right)}\prod_{j=1}^{d(y_k)}\left(g\left(y_k,j\right)-i\right)\right)+\sum_{k = 1}^{r(y)-1} \left({f\left(x',i-1,z_k-1\right)}\prod_{j=1}^{d(y_k)}\left(g\left(y_k,j\right)-i\right)\right)+$$
$$+{f\left(x'1,i,0\right)}\prod_{j=1}^{d(y_{r(y)})}\left(g\left(y_{r(y)},j\right)-i\right)\Longleftrightarrow$$
$$\Longleftrightarrow{f\left(x'1,i,0\right)}\prod_{j=1}^{d(y)}\left(g\left(y,j\right)-i\right)+{f\left(x',i-1,z-1\right)}\prod_{j=1}^{d(y)}\left(g\left(y,j\right)-i\right)=$$
$$=\sum_{k = 1}^{r(y)} \left({f\left(x'1,i,0\right)}\prod_{j=1}^{d(y_k)}\left(g\left(y_k,j\right)-i\right)\right)+\sum_{k = 1}^{r(y)-1} \left({f\left(x',i-1,z_k-1\right)}\prod_{j=1}^{d(y_k)}\left(g\left(y_k,j\right)-i\right)\right).$$

Заметим, что по Следствию \ref{main}
$$ \prod_{j=1}^{d(y)}\left(g\left(y,j\right)-i\right)=\sum_{k = 1}^{r(y)} \prod_{j=1}^{d(y_k)}\left(g\left(y_k,j\right)-i\right)\Longrightarrow$$
$$\Longrightarrow{f\left(x'1,i,0\right)}\prod_{j=1}^{d(y)}\left(g\left(y,j\right)-i\right)=\sum_{k = 1}^{r(y)} \left({f\left(x'1,i,0\right)}\prod_{j=1}^{d(y_k)}\left(g\left(y_k,j\right)-i\right)\right).$$

Таким образом, наше утверждение равносильно следующему:
$${f\left(x',i-1,z-1\right)}\prod_{j=1}^{d(y)}\left(g\left(y,j\right)-i\right)=\sum_{k = 1}^{r(y)-1
} \left({f\left(x',i-1,z_k-1\right)}\prod_{j=1}^{d(y_k)}\left(g\left(y_k,j\right)-i\right)\right).$$

Рассмотрим $y'$ и $x'$. Пусть $z'_k=h(x',y'_k)$, а $z'=h(x',y')$. Сразу понятно, что из того, что $x$ и $y$ оканчиваются на $1$, следует, что $z'=z-1$.

\begin{picture}(400,200)
\put(280,160){$y=2221$}
\put(300,150){\vector(2,-1){200}}
\put(300,150){\vector(1,-2){50}}
\put(300,150){\vector(-1,-2){50}}
\put(300,150){\vector(-2,-1){200}}
\put(50,30){$y_1=1221$}
\put(200,30){$y_2=2121$}
\put(310,30){$y_3=2211$}
\put(470,30){$y_4=222$}
\end{picture}

\begin{picture}(400,200)
\put(280,160){$y'=222\xcancel{1}$}
\put(300,150){\vector(2,-1){200}}
\put(300,150){\vector(1,-2){50}}
\put(300,150){\vector(-1,-2){50}}
\put(300,150){\vector(-2,-1){200}}
\put(50,30){$y'_1=122\xcancel{1}$}
\put(200,30){$y'_2=212\xcancel{1}$}
\put(310,30){$y'_3=221\xcancel{1}$}
\put(470,30){$\xcancel{y'_4=222}$}
\end{picture}

На рисунках выше изображено, как могут выглядеть предки вершин $y$ и $y'$.

Заметим, что $r(y')=r(y)-1$ и $\forall k \in \{ 1,...,r(y)-1 \}$ $y_k=y'_k1$, а значит, что $z'_k=z_k-1$. Также заметим, что номер $y_{r(y)}$ оканчивается на $2$, а это значит, $z_{r(y)}=0$.

Теперь мы готовы воспользоваться предположением индукции при $x'$, $y'$ и $i'=i-1$. Воспользуемся:
\renewcommand{\labelenumiii}{\arabic{enumi}.\arabic{enumii}.\arabic{enumiii}$^\circ$}
$${f\left(x',i',z'\right)}\prod_{j=1}^{d(y')}\left(g\left(y',j\right)-i'\right)=\sum_{k = 1}^{r(y')} \left({f\left(x',i',z'_k\right)}\prod_{j=1}^{d(y'_k)}\left(g\left(y'_k,j\right)-i'\right)\right) \Longleftrightarrow$$
$$ \Longleftrightarrow {f\left(x',i-1,z-1\right)}\prod_{j=1}^{d(y)}\left(g\left(y',j\right)-i+1\right)=$$
$$=\sum_{k = 1}^{r(y)-1} \left({f\left(x',i-1,z_k-1\right)}\prod_{j=1}^{d(y_k)}\left(g\left(y'_k,j\right)-i+1\right)\right) \Longleftrightarrow$$
$$ \Longleftrightarrow {f\left(x',i-1,z-1\right)}\prod_{j=1}^{d(y)}\left(g\left(y,j\right)-i\right)=\sum_{k = 1}^{r(y)-1} \left({f\left(x',i-1,z_k-1\right)}\prod_{j=1}^{d(y_k)}\left(g\left(y_k,j\right)-i\right)\right).$$
Что и требовалось.

\item Пусть $i=0$.

Утверждение Леммы в данном случае мгновенно следует из Следствия \ref{y000}.
\end{enumerate}

\item Пусть $i \ne 1$ и $\exists x',y': x=x'2, \; y=y'2$, причём номер $y$ содержит хотя бы одну единицу.

Пусть $y''=y'11$.

\setlength{\unitlength}{0.20mm}
\begin{picture}(400,200)
\put(250,160){$y=2221...2$}
\put(300,150){\vector(2,-1){200}}
\put(300,150){\vector(1,-2){50}}
\put(300,150){\vector(-1,-2){50}}
\put(300,150){\vector(-2,-1){200}}
\put(50,30){$y_1=1221...2$}
\put(180,30){$y_2=2121...2$}
\put(310,30){$y_3=2211...2$}
\put(440,30){$y_4=222...2$}
\end{picture}

\setlength{\unitlength}{0.20mm}
\begin{picture}(400,200)
\put(250,160){$y''=2221...11$}
\put(300,150){\vector(2,-1){200}}
\put(300,150){\vector(1,-2){50}}
\put(300,150){\vector(-1,-2){50}}
\put(300,150){\vector(-2,-1){200}}
\put(50,30){$y''_1=1221...11$}
\put(180,30){$y''_2=2121...11$}
\put(310,30){$y''_3=2211...11$}
\put(440,30){$y''_4=222...11$}
\end{picture}

На рисунке выше изображено, как могут выглядеть предки вершин $y$ и $y''$. Замечаем, что номер $y$ оканчивается на $2$, а значит  $z\ge 1$. Также замечаем, что $\forall k \in \{1,...,r(y)  \}$ номер $y_k$ оканчивается на $2$, а значит  $z_k\ge 1$. И ещё замечаем, что $r(y)=r(y'')$, кроме того, $\forall k \in \{1,...,r(y)\} \quad \exists y'''_k: y'''_k2=y_k \; \& \; y'''_k11=y''_k$. Посчитаем, пользуясь рекурсивной формулой для $f$:
$${f\left(x'2,i,z\right)}\prod_{j=1}^{d(y)}\left(g\left(y,j\right)-i\right)=\sum_{k = 1}^{r(y)} \left({f\left(x'2,i,z_k\right)}\prod_{j=1}^{d(y_k)}\left(g\left(y_k,j\right)-i\right)\right) \Longleftrightarrow$$
$$\Longleftrightarrow 
\left(1-i\right){f\left(x'2,i,z\right)}\frac{\prod_{j=1}^{d(y)}\left(g\left(y,j\right)-i\right)}{1-i}=\sum_{k = 1}^{r(y)} \left(\left(1-i\right){f\left(x'2,i,z_k\right)}\frac{\prod_{j=1}^{d(y_k)}\left(g\left(y_k,j\right)-i\right)}{1-i}\right) \Longleftrightarrow$$
$$\Longleftrightarrow 
{f\left(x'11,i,z+1\right)}\prod_{j=1}^{d(y'')}\left(g\left(y'',j\right)-i\right)=\sum_{k = 1}^{r(y'')} \left({f\left(x'11,i,z_k+1\right)}\prod_{j=1}^{d(y''_k)}\left(g\left(y''_k,j\right)-i\right)\right).$$
А это условие леммы для $x'11$ и $y'11$. Он уже был разобран в случае \ref{3}$^\circ$.

\item Пусть $i \ne 1$ и $\exists x',y': x=x'2,$ $y=y'1$, причём номер $y$ содержит ровно одну единицу.

\begin{picture}(400,200)
\put(280,160){$y=2221$}
\put(300,150){\vector(2,-1){200}}
\put(300,150){\vector(1,-2){50}}
\put(300,150){\vector(-1,-2){50}}
\put(300,150){\vector(-2,-1){200}}
\put(50,30){$y_1=1221$}
\put(200,30){$y_2=2121$}
\put(310,30){$y_3=2211$}
\put(470,30){$y_4=222$}
\end{picture}

На рисунке выше изображено, как могут выглядеть предки вершины $y$. 
Замечаем, что номер $y$ оканчивается на $1$, а значит  $z=0$. Также замечаем, что $\forall k \in \{1,...,r(y)-1\}$ номер $y_k$ оканчивается на $1$, а значит  $z_k=0$. И ещё замечаем, что номер $y_{r(y)}$ оканчивается на $2$, то есть $z_{r(y)}\ge 1$. Посчитаем, пользуясь рекурсивной формулой для $f$:
$${f\left(x'2,i,0\right)}\prod_{j=1}^{d(y)}\left(g\left(y,j\right)-i\right)=\sum_{k = 1}^{r(y)-2} \left({f\left(x'2,i,0\right)}\prod_{j=1}^{d(y_k)}\left(g\left(y_k,j\right)-i\right)\right)+$$
$$+{f\left(x'2,i,0\right)}\prod_{j=1}^{d(y_{r(y)-1})}\left(g\left(y_{r(y)-1},j\right)-i\right)+{f\left(x'2,i,z_{r(y)}\right)}\prod_{j=1}^{d(y_{r(y)})}\left(g\left(y_{r(y)},j\right)-i\right) \Longleftrightarrow$$
$$\Longleftrightarrow{f\left(x'2,i,0\right)}\prod_{j=1}^{d(y)}\left(g\left(y,j\right)-i\right)=\sum_{k = 1}^{r(y)-2} \left({f\left(x'2,i,0\right)}\prod_{j=1}^{d(y_k)}\left(g\left(y_k,j\right)-i\right)\right)+$$
$$+\frac{{f\left(x'2,i,0\right)}}{1-i}\left(\prod_{j=1}^{d(y_{r(y)-1})}\left(g\left(y_{r(y)-1},j\right)-i\right)\right)\left(1-i\right)+$$
$$+{f\left(x'2,i,z_{r(y)}\right)}\left(1-i\right)\frac{\prod_{j=1}^{d(y_{r(y)})}\left(g\left(y_{r(y)},j\right)-i\right)}{1-i} \Longleftrightarrow$$
$$\Longleftrightarrow \text{(По Утверждению \ref{211})} \Longleftrightarrow$$
$$\Longleftrightarrow{f\left(x'11,i,1\right)}\prod_{j=1}^{d(y)}\left(g\left(y,j\right)-i\right)=\sum_{k = 1}^{r(y)-2} \left({f\left(x'11,i,1\right)}\prod_{j=1}^{d(y_k)}\left(g\left(y_k,j\right)-i\right)\right)+$$
$$+{{f\left(x'11,i,0\right)}}\left(\prod_{j=1}^{d(y_{r(y)})}\left(g\left(y_{r(y)},j\right)-i\right)\right)+{f\left(x'11,i,z_{r(y)}+1\right)}{\prod_{j=1}^{d(y_{r(y)-1})}\left(g\left(y_{r(y)-1},j\right)-i\right)}.$$
Я утверждаю, что это условие Леммы для $x'11$ и $y$. Почему это верно? Пусть $z'=h(x'11,y)$ и $z'_k=h(x'11,y_k)$. Несложно заметить, что $|y|\ge|x|\ge 2$, а это значит, что номер $y$ оканчивается на $21$, поэтому $z'=1$. Также понятно, что при $k \in \{1,...,r(y)-2\}$ номер $y_k$ оканчивается на $21$, поэтому $z'_k=1$. Кроме того, ясно, что $y_{r(y)-1}$ имеет вид
$$y_{r(y)-1}=\underbrace{2...2}_{d(y)-1}11,$$
в то время как $y_{r(y)}$ имеет вид $$y_{r(y)}\underbrace{2...2}_{d(y)-1}2,$$
поэтому $z'_{r(y)-1}=h(x'11,y_{r(y)-1})=h(x'2,y_{r(y)})+1=z'_{r(y)}+1$, а номер $y_{r(y)}$ оканчивается на $2$, поэтому $z'_{r(y)}=0$.

Таким образом, это и есть условие Леммы для $x'11$ и $y$. А этот случай уже был разобран в случае $\ref{3}^\circ$.

\item Пусть $i \ne 1$ и $\exists x': x=x'2$, причём номер $y$ состоит только из двоек.

Пусть $y=y'2$; $y''=y'11$.

\begin{picture}(400,200)
\put(280,160){$y=2222$}
\put(300,150){\vector(2,-1){200}}
\put(300,150){\vector(1,-2){50}}
\put(300,150){\vector(-1,-2){50}}
\put(300,150){\vector(-2,-1){200}}
\put(50,30){$y_1=1222$}
\put(200,30){$y_2=2122$}
\put(310,30){$y_3=2212$}
\put(470,30){$y_4=2221$}
\end{picture}

\begin{picture}(400,200)
\put(280,160){$y''=22211$}
\put(300,150){\vector(2,-1){200}}
\put(300,150){\vector(1,-2){50}}
\put(300,150){\vector(-1,-2){50}}
\put(300,150){\vector(-2,-1){200}}
\put(50,30){$y''_1=12211$}
\put(200,30){$y''_2=21211$}
\put(310,30){$y''_3=22111$}
\put(470,30){$y''_4=2221$}
\end{picture}

На рисунке выше изображено, как могут выглядеть предки вершин $y$ и $y''$. Замечаем, что номер $y$ оканчивается на $2$, а значит  $z\ge 1$. Также замечаем, что $\forall k \in \{1,...,r(y)-1  \}\quad$ номер $y_k$ оканчивается на $2$, а значит  $z_k\ge 1$, кроме того номер $y_{r(y)}$ оканчивается на $1$, а значит $z_{r(y)}=0$. И ещё замечаем, что $r(y)=r(y'')$ и $\forall k \in \{1,...,r(y)-1\}\quad\exists y'''_k: y'''_k2=y_k \; \& \; y'''_k11=y''_k$, а $y_{r(y)}=y''_{r(y)}$. Посчитаем, пользуясь рекурсивной формулой для $f:$
$${f\left(x'2,i,z\right)}\prod_{j=1}^{d(y)}\left(g\left(y,j\right)-i\right)=$$
$$=\sum_{k = 1}^{r(y)-1} \left({f\left(x'2,i,z_k\right)}\prod_{j=1}^{d(y_k)}\left(g\left(y_k,j\right)-i\right)\right)+{f\left(x'2,i,0\right)}\prod_{j=1}^{d(y_{r(y)})}\left(g\left(y_{r(y)},j\right)-i\right) \Longleftrightarrow$$
$$\Longleftrightarrow 
\left(1-i\right){f\left(x'2,i,z\right)}\frac{\prod_{j=1}^{d(y)}\left(g\left(y,j\right)-i\right)}{1-i}=$$
$$=\sum_{k = 1}^{r(y)-1} \left(\left(1-i\right){f\left(x'2,i,z_k\right)}\frac{\prod_{j=1}^{d(y_k)}\left(g\left(y_k,j\right)-i\right)}{1-i}\right)+{f\left(x'2,i,0\right)}\prod_{j=1}^{d(y_{r(y)})}\left(g\left(y_{r(y)},j\right)-i\right)\Longleftrightarrow$$
$$\Longleftrightarrow \text{(по Утверждению \ref{211})} \Longleftrightarrow$$
$$\Longleftrightarrow 
{f\left(x'11,i,z+1\right)}\prod_{j=1}^{d(y'')}\left(g\left(y'',j\right)-i\right)=$$
$$=\sum_{k = 1}^{r(y'')-1} \left({f\left(x'11,i,z_k+1\right)}\prod_{j=1}^{d(y''_k)}\left(g\left(y''_k,j\right)-i\right)\right)+{f\left(x'11,i,1\right)}\prod_{j=1}^{d(y''_{r(y)})}\left(g\left(y''_{r(y)},j\right)-i\right).$$
Я утверждаю, что это условие Леммы для $x'11$ и $y'11=y''$. Почему это верно? Пусть $z'=h(x'11,y'')$ и $z'_k=h(x'11,y''_k)$. Ясно, что номер $y$ имеет вид
$$\underbrace{2...2}_{d(y)-1}2,$$
в то время как номер $y''$ имеет вид $$\underbrace{2...2}_{d(y)-1}11,$$
поэтому $z'=h(x'11,y'')=h(x'2,y)+1=z+1$. Также помним, что $\forall k \in \{1,...,r(y)-1\}$ $\exists y'''_k: y'''_k2=y_k \; \& \; y'''_k11=y''_k$, то есть опять же $z'_k=h(x'11,y''_k)=h(x'11,y'''_k11)=h(x',y'''_k)+2=h(x'2,y'''_k2)+1=h(x'2,y_k)+1=z_k+1$. И наконец, номер $y''_{r(y)}$ заканчивается на $1$, поэтому $z'_{r(y)}=0$.

Таким образом, это и есть условие Леммы для $x'11$ и $y$. А этот вариант уже был разобран в случае $\ref{3}^\circ$.

\end{enumerate}

Несложно понять, что все случаи разобраны. Осталось проверить \underline{\textbf{Базу}}. Приступим к её проверке.

\underline{\textbf{База}}: $|x|=0 \Longleftrightarrow x=\varepsilon$.

Хотим доказать, что
$${f\left(\varepsilon,i,z\right)}\prod_{j=1}^{d(y)}\left(g\left(y,j\right)-i\right)=\sum_{k = 1}^{r(y)} \left({f\left(\varepsilon,i,z_k\right)}\prod_{j=1}^{d(y_k)}\left(g\left(y_k,j\right)-i\right)\right).$$ 

Мы знаем, что при $x=\varepsilon$ единственным $f(x,y,z)$ является $f(\varepsilon,0,0)=1$, поэтому нам надо доказать, что 
$$\prod_{j=1}^{d(y)}g\left(y,j\right)=\sum_{k = 1}^{r(y)} \prod_{j=1}^{d(y_k)}g\left(y_k,j\right).$$ 

Если номер $y$ содержит хотя бы одну единицу, то это Следствие \ref{main}. Поэтому нам надо только рассмотреть случай, при котором номер $y$ не содержит единиц.

В данном случае мы хотим доказать, что 
$$\prod_{j=1}^{d(y)}\left( 2j-1\right)=\sum_{k = 1}^{d(y)}\left(\prod_{j=1}^{d(y)-k}\left(2j-1\right)\prod_{j=d(y)-k+1}^{d(y)-1}\left(2j\right)\right)\Longleftrightarrow$$
$$\Longleftrightarrow (d:=j-1,e:=k-1)\Longleftrightarrow$$
$$\Longleftrightarrow \prod_{d=0}^{d(y)-1}\left( 2d+1\right)=\sum_{e = 0}^{d(y)-1}\left(\prod_{d=0}^{d(y)-e-2}\left(2d+1\right)\prod_{d=d(y)-e-1}^{d(y)-2}\left(2d+2\right)\right)$$

Замечаем, что по Лемме \ref{dim} при $a=1$ и $b=c=d(y)-1$
$$\prod_{d=0}^{d(y)-1}\left(2d+1\right)-\sum_{e = 0}^{d(y)-1}\left(\prod_{d=0}^{d(y)-e-2}\left(2d+1\right)\prod_{d=d(y)-e-1}^{d(y)-2}\left(2d+2\right)\right)=\prod_{d=0}^{-1}\left(2d+1\right)\prod_{d=-1}^{d(y)-2}\left(2d+2\right)=0.$$

Что и требовалось.

\underline{\textbf{База}} доказана.

Лемма доказана.
\end{proof}

\subsection{Вторая ключевая лемма и завершение доказательства теоремы}

\begin{Lemma} \label{2key}
Пусть $x,y \in \mathbb{YF}:$ $|y|=|x|$. Тогда 
$$d(x,y)=o(x,y).$$
\end{Lemma}
\begin{proof}
Ясно, что мы хотим доказать, что $\forall x\in\mathbb{YF} \; o(x,x)=1$, а также, что если $x,y \in\mathbb{YF}$: $|y|=|x|$, причём $x \ne y$, то $o(x,y)=0$. Пусть $z=h(x,y)$. 

Пусть $x=\varepsilon$. 

Тогда $|x|=0$. Единственный $y: |y|=0$ -- это $y=\varepsilon$. $d(\varepsilon,\varepsilon)=1$, а
$$o(\varepsilon,\varepsilon)=\sum_{i=0}^{|\varepsilon|}\left( {f\left(\varepsilon,0,0\right)}\prod_{j=1}^{0}\left(g\left(\varepsilon,j\right)-i\right)\right)=1.$$

То есть при $x=\varepsilon$ лемма доказана.

Теперь пусть $|x| \ge 1$.

Давайте доказывать Лемму по индукции по $|x|$.

\underline{\textbf{База}}: $|x|=1 \Longleftrightarrow x=1$.

Знаем, что при $x=1$ единственными $f(x,y,z)$ являются $f(1,0,0)=1$, $f(1,1,0)=-1$, $f(1,0,1)=1$, $f(1,1,1)=0$.

Единственным $y: |y|=1$ является $y=1$. $d(1,1)=1$, а
$$o(1,1)=\sum_{i=0}^{|1|}\left( {f\left(1,i,1\right)}\prod_{j=1}^{0}\left(g\left(1,j\right)-i\right)\right)=f(1,0,1)+f(1,1,1)=1.$$ 
\underline{\textbf{Переход}} к $|x|\ge2$.

Рассмотрим пять случаев:
\renewcommand{\labelenumi}{\arabic{enumi}$^\circ$}
\begin{enumerate}
    \item \label{sl1} Пусть $\exists x',y': x=x'1,y=y'1$.

Очевидно, в данном случае $z \ge 1$ и $h(x',y')=h(x'1,y'1)-1=h(x,y)$. Посчитаем, пользуясь рекурсивной формулой для $f$:
$$o(x,y)=\sum_{i=0}^{|x'1|}\left( {f\left(x'1,i,z\right)}\prod_{j=1}^{d(y'1)}\left(g\left(y'1,j\right)-i\right)\right)=$$
$$=\sum_{i=0}^{|x'1|}\left(\left( {f\left(x'1,i,0\right)+f\left(x',i-1,z-1\right)}\right)\prod_{j=1}^{d(y'1)}\left(g\left(y'1,j\right)-i\right)\right)=$$
$$=\sum_{i=0}^{|x|}\left( {f\left(x,i,0\right)}\prod_{j=1}^{d(y)}\left(g\left(y,j\right)-i\right)\right)+\sum_{i=1}^{|x'1|}\left( {f\left(x',i-1,z-1\right)}\prod_{j=1}^{d(y')}\left(g\left(y',j\right)-(i-1)\right)\right)=$$
$$=\sum_{i=0}^{|x|}\left( {f\left(x,i,0\right)}\prod_{j=1}^{d(y)}\left(g\left(y,j\right)-i\right)\right)+\sum_{i=0}^{|x'|}\left( {f\left(x',i,z-1\right)}\prod_{j=1}^{d(y')}\left(g\left(y',j\right)-i\right)\right)=$$
$$=\sum_{i=0}^{|x|}\left( {f\left(x,i,0\right)}\prod_{j=1}^{d(y)}\left(g\left(y,j\right)-i\right)\right)+o(x',y')=$$
$$=\text{(По предположению индукции)}=$$
$$=\sum_{i=0}^{|x|}\left( {f\left(x,i,0\right)}\prod_{j=1}^{d(y)}\left(g\left(y,j\right)-i\right)\right)+d(x',y').$$

Вспомним, что если $ x,y \in\mathbb{YF}$ и $|y|=|x|$, то $d(x,y)=1$, если $x=y$ и $d(x,y)=0$, если $x\ne y$. Ясно, что в данном случае $|x|=|y|\Longleftrightarrow|x'1|=|y'1|\Longleftrightarrow|x'|=|y'|$, а поэтому $d(x,y)=d(x'1,y'1)=d(x',y')$.

Таким образом, чтобы доказать, что в данном случае $d(x,y)=o(x,y)$, нужно доказать, что $\forall x,y\in\mathbb{YF}:$ $|y|=|x|$ 
$$o'(x,y):=\sum_{i=0}^{|x|}\left( {f\left(x,i,0\right)}\prod_{j=1}^{d(y)}\left(g\left(y,j\right)-i\right)\right)=0.$$

\item \label{sl2} Пусть $\exists x',y': x=x'1, \;y=y'2$.

Ясно, что в данном случае $z=0$, то есть снова нужно доказать, что
$$o'(x,y):=\sum_{i=0}^{|x|}\left( {f\left(x,i,0\right)}\prod_{j=1}^{d(y)}\left(g\left(y,j\right)-i\right)\right)=0.$$

Давайте усилим утверждение, необходимое для доказательства случаев \ref{sl1}$^\circ$ и \ref{sl2}$^\circ$, после чего докажем его по индукции.

\begin{Lemma} \label{zhopa}
Если $x,y \in \mathbb{YF}$, $|x| \ge |y|$ и при этом $\exists x': x=x'1$, то 
$$o'(x,y):=\sum_{i=0}^{|x|}\left( {f\left(x,i,0\right)}\prod_{j=1}^{d(y)}\left(g\left(y,j\right)-i\right)\right)=0.$$
\end{Lemma}
\begin{proof}

Давайте докажем Лемму по индукции по $|y|$.

\underline{\textbf{База}}: $|y|=0 \Longleftrightarrow y=\varepsilon$.

Заметим, что
$$o'(x,\varepsilon)=\sum_{i=0}^{|x|}\left( {f\left(x,i,0\right)}\prod_{j=1}^{d(\varepsilon)}\left(g\left(\varepsilon,j\right)-i\right)\right)=\sum_{i=0}^{|x|}\left( {f\left(x,i,0\right)}\prod_{j=1}^{0}\left(g\left(\varepsilon,j\right)-i\right)\right)=\sum_{i=0}^{|x|} {f\left(x,i,0\right)}.$$

Таким образом, мы хотим доказать, что если $x \in \mathbb{YF}$, и $\exists x': x=x'1$, то  $$\sum_{i=0}^{|x|} {f\left(x,i,0\right)}=0.$$
Давайте усилим это утверждение и докажем его по индукции.

\begin{Prop} \label{baza}
Если $x \in \mathbb{YF}$, и $x\ne\varepsilon$, то  $$\sum_{i=0}^{|x|} {f\left(x,i,0\right)}=0.$$
\end{Prop}
\begin{proof}

Докажем Утверждение по индукции по $|x|$.

\underline{\textbf{База}}: проверим Утверждение для $x=1$ и $x=2$.

Посчитаем, пользуясь определением функции $f$:
$$\sum_{i=0}^{|1|} {f\left(1,i,0\right)}=1+(-1)=0;$$
$$\sum_{i=0}^{|2|} {f\left(2,i,0\right)}=\frac{1}{2}+0+\left(-\frac{1}{2}\right)=0.$$
\underline{\textbf{База}} доказана.

\underline{\textbf{Переход}} к $x: \; |x|\ge2$ и $x\ne 2$.

Рассмотрим два случая:
\renewcommand{\labelenumii}{\roman{enumii}$^\circ$}
\renewcommand{\labelenumiii}{\roman{enumii}.\roman{enumiii}$^\circ$}
\begin{enumerate}
    \item Пусть $\exists x':x=\alpha_0x'1$, при $\alpha_0\in\{1,2\}$.

Воспользуемся предположением при $x'1$ и при $\alpha_0x'$, после чего посчитаем, пользуясь формулой для $f$:
$$o'(\alpha_0x',y)=0 \; \& \; o'(x'1,y)=0 \Longleftrightarrow$$
$$\Longleftrightarrow \sum_{i=0}^{|\alpha_0x'|}{f\left(\alpha_0x',i,0\right)}=0 \; \& \; \sum_{i=0}^{|x'1|}{f\left(x'1,i,0\right)}=0 \Longrightarrow$$
$$\Longrightarrow\text{(По Утверждению \ref{z0})}\Longrightarrow$$
$$\Longrightarrow \sum_{i=0}^{|x|-1}{\left(\left(-i-1\right)f\left(x,i+1,0\right)\right)}=0 \; \& \;\sum_{i=0}^{|x|-\alpha_0}{\left(\left(|x|-i\right)f\left(x,i,0\right)\right)}=0 \Longleftrightarrow$$
$$\Longleftrightarrow \sum_{i=1}^{|x|}{\left(\left(-i\right)f\left(x,i,0\right)\right)}=0 \; \& \;\sum_{i=0}^{|x|-\alpha_0}{\left(\left(|x|-i\right)f\left(x,i,0\right)\right)}=0\Longrightarrow$$
$$\Longrightarrow \sum_{i=0}^{|x|-\alpha_0}{\left(|x|f\left(x,i,0\right)\right)}+   \sum_{i=|x|-\alpha_0+1}^{|x|}{\left(if\left(x,i,0\right)\right)}=0.$$

Рассмотрим два подслучая:
\begin{enumerate}
    \item Пусть $\alpha_0=1$.
    
    В данном случае равенство принимает вид $$\sum_{i=0}^{|x|-1}{\left(|x|f\left(x,i,0\right)\right)}+   \sum_{i=|x|}^{|x|}{\left(i f\left(x,i,0\right)\right)}=0 \Longleftrightarrow \sum_{i=0}^{|x|}{\left(|x|f\left(x,i,0\right)\right)}=0 \Longleftrightarrow\sum_{i=0}^{|x|}{f\left(x,i,0\right)}=0.$$
    Что и требовалось.
    
    \item Пусть $\alpha_0=2$.
    
    В данном случае равенство принимает вид $$\sum_{i=0}^{|x|-2}{\left(|x|f\left(x,i,0\right)\right)}+   \sum_{i=|x|-1}^{|x|}{\left(if\left(x,i,0\right)\right)}=0.$$
    
    Заметим, что если $x$ начинается на двойку, то из определения функции $f$ ясно, что $f(x,|x|-1,0)=0$. Поэтому необходимое равенство равносильно следующему равенству:
    $$\sum_{i=0}^{|x|-2}{\left(|x|f\left(x,i,0\right)\right)}+   \sum_{i=|x|}^{|x|}{\left(if\left(x,i,0\right)\right)}=0 \Longleftrightarrow \sum_{i=0}^{|x|}{\left(|x|f\left(x,i,0\right)\right)}=0 \Longleftrightarrow \sum_{i=0}^{|x|}{f\left(x,i,0\right)}=0.$$
    Что и требовалось.
\end{enumerate}

\item $Пусть \exists x':x=x'2$.

Воспользуемся предположением при $x'1$ и прошлым случаем при $x'11$, после чего посчитаем, пользуясь формулой для $f$:
$$o'(x'11,y)=0 \; \& \; o'(x'1,y)=0 \Longleftrightarrow$$
$$\Longleftrightarrow \sum_{i=0}^{|x'11|}{f\left(x'11,i,0\right)}=0 \; \& \; \sum_{i=0}^{|x'1|}{f\left(x'1,i,0\right)}=0 \Longleftrightarrow$$
$$\Longleftrightarrow \sum_{i=0}^{|x'11|}{f\left(x'11,i,0\right)}=0 \; \& \; \sum_{i=0}^{|x'1|}{((-i-1)f\left(x'11,i+1,0\right))}=0 \Longleftrightarrow$$
$$\Longleftrightarrow \sum_{i=0}^{|x|}{f\left(x'11,i,0\right)}=0 \; \& \; \sum_{i=1}^{|x|}{((-i)f\left(x'11,i,0\right))}=0 \Longrightarrow$$
$$\Longrightarrow f\left(x'11,0,0\right)+ \sum_{i=1}^{|x|}{((1-i)f\left(x'11,i,0\right))}=0 \Longleftrightarrow$$
$$\Longleftrightarrow f\left(x'11,0,0\right)+ \sum_{i=2}^{|x|}{((1-i)f\left(x'11,i,0\right))}=0 \Longleftrightarrow $$
$$\Longleftrightarrow\text{(По Утверждению \ref{z0})}\Longleftrightarrow$$
$$\Longleftrightarrow f\left(x'2,0,0\right)+ \sum_{i=2}^{|x|}{f\left(x'2,i,0\right)}=0.$$
Заметим, что если $x$ заканчивается на двойку, то из определения функции $f$ ясно, что $f(x,1,0)=0$. Поэтому необходимое равенство равносильно следующему равенству:
$$ \sum_{i=0}^{|x|}{f\left(x'2,i,0\right)}=0\Longleftrightarrow\sum_{i=0}^{|x|}{f\left(x,i,0\right)}=0.$$
Что и требовалось.

\end{enumerate}
\end{proof}

Итак, ясно, что Утверждение \ref{baza} доказывает \underline{\textbf{Базу}}.

Теперь будем доказывать \underline{\textbf{База}}: к $|y|\ge 1$.

Рассмотрим два случая:
\renewcommand{\labelenumii}{\roman{enumii}$^\circ$}
\renewcommand{\labelenumiii}{\roman{enumii}.\roman{enumiii}$^\circ$}
\begin{enumerate}
    \item Пусть номер $y$ содержит хотя бы одну единицу.

Давайте докажем, что в данном случае $o'(x,y)=o'(x,y_1)+...+o'(x,y_{r(y)})$. Посчитаем:
$$o'(x,y)=o'(x,y_1)+...+o'(x,y_{r(y)}) \Longleftrightarrow$$
$$\Longleftrightarrow \sum_{i=0}^{|x|}\left( {f\left(x,i,0\right)}\prod_{j=1}^{d(y)}\left(g\left(y,j\right)-i\right)\right)=\sum_{k=1}^{r(y)}\sum_{i=0}^{|x|}\left( {f\left(x,i,0\right)}\prod_{j=1}^{d(y_k)}\left(g\left(y_k,j\right)-i\right)\right) \Longleftarrow$$
$$\Longleftarrow \sum_{i=0}^{|x|}\prod_{j=1}^{d(y)}\left(g\left(y,j\right)-i\right)=\sum_{k=1}^{r(y)}\sum_{i=0}^{|x|} \prod_{j=1}^{d(y_k)}\left(g\left(y_k,j\right)-i\right) \Longleftrightarrow $$
$$\Longleftrightarrow \sum_{i=0}^{|x|}\prod_{j=1}^{d(y)}\left(g\left(y,j\right)-i\right)=\sum_{i=0}^{|x|}\sum_{k=1}^{r(y)} \prod_{j=1}^{d(y_k)}\left(g\left(y_k,j\right)-i\right) \Longleftarrow $$
$$ \Longleftarrow \forall i \in \{0,...,|x|\} \; \prod_{j=1}^{d(y)}\left(g\left(y,j\right)-i\right)=\sum_{k=1}^{r(y)} \prod_{j=1}^{d(y_k)}\left(g\left(y_k,j\right)-i\right).$$
А это Следствие \ref{main}.

Таким образом, в данном случае $$o'(x,y)=o'(x,y_1)+...+o'(x,y_{r(y)})= \text{(По предположению индукции)}=0.$$

В данном случае переход доказан.

\item Пусть номер $y$ не содержит ни одной единицы.

Помним, что мы доказываем Лемму для $|x|\ge |y|$.

Пусть $x=\alpha_0x''$.

Воспользуемся предположением индукции при $x$ и $y'$ и при $x''$ и $y'$ ($y'=|y|-2\le |x|-2 \le |x|-\alpha_0 \le |x''| \le |x|$, поэтому мы можем воспользоваться предположением) и посчитаем:
$$o'(x,y')=0 \; \& \; o'(x'',y')=0 \Longleftrightarrow $$
$$\Longleftrightarrow \sum_{i=0}^{|x|}\left( {f\left(x,i,0\right)}\prod_{j=1}^{d(y')}\left(2j-1-i\right)\right)=0\; \& \; \sum_{i=0}^{|x''|}\left( {f\left(x'',i,0\right)}\prod_{j=1}^{d(y')}\left(2j-1-i\right)\right)=0 \Longleftrightarrow$$
$$\Longleftrightarrow \sum_{i=0}^{|x|}\left( {f\left(x,i,0\right)}\prod_{j=1}^{d(y')}\left(2j-1-i\right)\right)=0\; \& \; \sum_{i=0}^{|x|-\alpha_0}\left( {\left(|x|-i\right)f\left(x,i,0\right)}\prod_{j=1}^{d(y')}\left(2j-1-i\right)\right)=0 \Longrightarrow $$
$$\Longrightarrow \sum_{i=0}^{|x|-\alpha_0}\left(\left( {f\left(x,i,0\right)}\prod_{j=1}^{d(y')}\left(2j-1-i\right)\right)\left( (|x|-i)+(|y|-|x|-1)  \right)\right)+$$
$$+(|y|-|x|-1)\sum_{i=|x|-\alpha_0+1}^{|x|}\left( {f\left(x,i,0\right)}\prod_{j=1}^{d(y')}\left(2j-1-i\right)\right)=0 \Longleftrightarrow$$
$$\Longleftrightarrow \sum_{i=0}^{|x|-\alpha_0}\left(\left( {f\left(x,i,0\right)}\prod_{j=1}^{d(y')}\left(2j-1-i\right)\right)\left( |y|-1-i  \right)\right)+$$
$$+(|y|-|x|-1)\sum_{i=|x|-\alpha_0+1}^{|x|}\left( {f\left(x,i,0\right)}\prod_{j=1}^{d(y')}\left(2j-1-i\right)\right)=0 \Longleftrightarrow$$
$$\Longleftrightarrow \sum_{i=0}^{|x|-\alpha_0}\left( {f\left(x,i,0\right)}\prod_{j=1}^{d(y)}\left(2j-1-i\right)\right)+(|y|-|x|-1)\sum_{i=|x|-\alpha_0+1}^{|x|}\left( {f\left(x,i,0\right)}\prod_{j=1}^{d(y')}\left(2j-1-i\right)\right)=0.$$

Рассмотрим два подслучая:
\begin{enumerate}
    \item Пусть $\alpha_0=1$.
    
    В данном случае равенство принимает вид
    $$\sum_{i=0}^{|x|-1}\left( {f\left(x,i,0\right)}\prod_{j=1}^{d(y)}\left(2j-1-i\right)\right)+(|y|-|x|-1)\sum_{i=|x|}^{|x|}\left( {f\left(x,i,0\right)}\prod_{j=1}^{d(y')}\left(2j-1-i\right)\right)=0\Longleftrightarrow$$
    $$\Longleftrightarrow\sum_{i=0}^{|x|}\left( {f\left(x,i,0\right)}\prod_{j=1}^{d(y)}\left(2j-1-i\right)\right)=0\Longleftrightarrow o'(x,y)=0.$$
    Что и требовалось.
    
    \item Пусть $\alpha_0=2$.
    
    В данном случае равенство принимает вид
    $$\sum_{i=0}^{|x|-2}\left( {f\left(x,i,0\right)}\prod_{j=1}^{d(y)}\left(2j-1-i\right)\right)+(|y|-|x|-1)\sum_{i=|x|-1}^{|x|}\left( {f\left(x,i,0\right)}\prod_{j=1}^{d(y')}\left(2j-1-i\right)\right)=0.$$
    
    Заметим, что в данном случае $x$ начинается на двойку, поэтому из определения функции $f$ ясно, что $f(x,|x|-1,0)=0$. Таким образом, равенство равносильно следующему равенству:
    $$\sum_{i=0}^{|x|-1}\left( {f\left(x,i,0\right)}\prod_{j=1}^{d(y)}\left(2j-1-i\right)\right)+(|y|-|x|-1)\sum_{i=|x|}^{|x|}\left( {f\left(x,i,0\right)}\prod_{j=1}^{d(y')}\left(2j-1-i\right)\right)=0\Longleftrightarrow$$
    $$\Longleftrightarrow\sum_{i=0}^{|x|}\left( {f\left(x,i,0\right)}\prod_{j=1}^{d(y)}\left(2j-1-i\right)\right)=o'(x,y)$$
    Что и требовалось
\end{enumerate}

\end{enumerate}

Все случаи разобраны.

Лемма доказана.

\end{proof}

Ясно, что из Леммы \ref{zhopa} мгновенно следуют случаи \ref{sl1}$^\circ$ и \ref{sl2}$^\circ$.

\item Пусть $\exists x',y': x=x'2, \; y=y'2$.

Ясно, что из того, что и номер $x$, и номер $y$ заканчивается на двойку, следует, что $z \ge 1$. Также вспомним, что если номер $x$ заканчивается на двойку, то $f(x,1,0)=0$. Посчитаем, пользуясь рекурсивной формулой для $f$:
$$o(x,y)=\sum_{i=0}^{|x'2|}\left( {f\left(x'2,i,z\right)}\prod_{j=1}^{d(y'2)}\left(g\left(y'2,j\right)-i\right)\right)=$$
$$=\sum_{i \in \{0,...,|x2|\} \textbackslash 1}\left( {f\left(x'2,i,z\right)}\prod_{j=1}^{d(y'2)}\left(g\left(y'2,j\right)-i\right)\right)=$$
$$=\sum_{i \in \{0,...,|x11|\} \textbackslash 1}^{|x'11|}\left( {\frac{f\left(x'11,i,z+1\right)}{1-i}}\left(\prod_{j=1}^{d(y'11)}\left(g\left(y'11,j\right)-i\right)\right)\left(1-i\right)\right)=$$
$$=\sum_{i \in \{0,...,|x11|\} \textbackslash 1}\left( {{f\left(x'11,i,z+1\right)}}\prod_{j=1}^{d(y'11)}\left(g\left(y'11,j\right)-i\right)\right).$$

По Утверждению \ref{y01} $f(x11,1,z+1)=0$, а значит выражение можно переписать в следующем виде:
$$=\sum_{i=0}^{|x'11|}\left( {{f\left(x'11,i,z+1\right)}}\prod_{j=1}^{d(y'11)}\left(g\left(y'11,j\right)-i\right)\right).$$

Ясно, что $h(x'11,y'11)=h(x',y')+2=h(x'2,y'2)+1=h(x,y)+1=z+1$, а значит данное выражение равняется 
$$o(x'11,y'11)= (\text{По случаю \ref{sl1}}^\circ)=d(x'11,y'11).$$

Ясно, что $\forall x,y\in \mathbb{YF}$: $|x|=|y|$ $d(x11,y11)=d(x2,y2)$, поэтому выражение принимает вид
$$d(x'2,y'2)=d(x,y).$$
Что и требовалось.

\item Пусть $\exists x',y': x=x'2,y=y'21$.

Заметим, что в данном случае номера $x$ и $y$ заканчиваются на разные цифры, а поэтому $z = 0$. Посчитаем:
$$o(x,y)=\sum_{i=0}^{|x'2|}\left( {f\left(x'2,i,0\right)}\prod_{j=1}^{d(y)}\left(g\left(y,j\right)-i\right)\right)=$$
$$=\text{(По Утверждению \ref{211})}=$$
$$=\sum_{i=0}^{|x'11|}\left( {f\left(x'11,i,1\right)}\prod_{j=1}^{d(y)}\left(g\left(y,j\right)-i\right)\right).$$

Ясно, что $h(x'11,y'21)=1$, поэтому данное выражение равняется
$$o(x'11,y)=(\text{По случаю \ref{sl1}}^\circ)=d(x'11,y)=0,$$
так как $|x'11|=|y|$ и при этом $x'11\ne y$.

Мы доказали, что $o(x,y)=0$. $d(x,y)=0$, так как $|x|=|y|$, и при этом $x\ne y$.

Что и требовалось.

\item Пусть $\exists x',y': x=x'2, \; y=y'11$.

Заметим, что в данном случае номера $x$ и $y$ заканчиваются на разные цифры, а поэтому $z = 0$. Также вспомним, что если $x$ заканчивается на двойку, то $f(x,1,0)=0$. Посчитаем:
$$o(x,y)=\sum_{i=0}^{|x'2|}\left( {f\left(x'2,i,0\right)}\prod_{j=1}^{d(y'11)}\left(g\left(y'11,j\right)-i\right)\right)=$$
$$=\sum_{i\in \{0,...,|x'2|\} \textbackslash 1}\left( {f\left(x'2,i,0\right)}\prod_{j=1}^{d(y'11)}\left(g\left(y'11,j\right)-i\right)\right)=$$
$$=\text{(По Утверждению \ref{z0})}=$$
$$=\sum_{i\in \{0,...,|x'11|\} \textbackslash 1}\left( {\left(1-i\right)f\left(x'11,i,0\right)}\frac{\prod_{j=1}^{d(y'2)}\left(g\left(y'2,j\right)-i\right)}{1-i}\right)=$$
$$=\sum_{i\in \{0,...,|x'11|\} \textbackslash 1}\left( {f\left(x'11,i,0\right)}{\prod_{j=1}^{d(y'2)}\left(g\left(y'2,j\right)-i\right)}\right).$$

Заметим, что  $g(y'2,1)=1$, а значит выражение можно переписать в следующем виде:
$$=\sum_{i=0}^{|x'11|}\left( {{f\left(x'11,i,0\right)}}\prod_{j=1}^{d(y'2)}\left(g\left(y'2,j\right)-i\right)\right).$$

Ясно, что $h(x'11,y'2)=0$, а значит данное выражение равняется 
$$o(x'11,y'2)= (\text{По случаю \ref{sl2}}^\circ)=d(x'11,y'2),$$
так как, $|x'11|=|y'2|$, и при этом $x'11\ne y'2$.

Мы доказали, что $o(x,y)=0$. $d(x,y)=0$, так как $|x|=|y|$, и при этом $x\ne y$.

Что и требовалось.

\end{enumerate}

Все случаи разобраны.

Лемма доказана.
\end{proof}

Давайте доказывать теорему.

Вспомним, что $|y|\ge |x|$, зафиксируем $x$ и будем доказывать теорему по индукции по $|y|$.

\underline{\textbf{База}}: $|y|=|x|$.

$d(x,y)=o(x,y)$ по Лемме \ref{2key}.

\underline{\textbf{Переход}}: к $y:|y|>|x|$.
$$d(x,y)=\sum_{k=0}^{r(y)}d(x,y_k)=\text{(По предположению)}=\sum_{k=0}^{r(y)}o(x,y_k)=$$
$$=\sum_{k=0}^{r(y)}\left(\sum_{i=0}^{|x|}\left({f\left(x,i,z_k\right)}\prod_{j=1}^{d(y_k)}\left(g\left(y_k,j\right)-i\right)\right)\right)=$$
$$\sum_{i=0}^{|x|}\left(\sum_{k=0}^{r(y)}\left({f\left(x,i,z_k\right)}\prod_{j=1}^{d(y_k)}\left(g\left(y_k,j\right)-i\right)\right)\right)=$$
$$=\text{(По лемме \ref{1key})}=$$
$$=\sum_{i=0}^{|x|}\left({f\left(x,i,z\right)}\prod_{j=1}^{d(y)}\left(g\left(y,j\right)-i\right)\right)=o\left(x,y\right).$$

Что и требовалось доказать.

Теорема полностью доказана.

\end{proof}

\begin{Col}
Пусть $x\in\mathbb{YF}$. Тогда
$$d(\varepsilon,x)=\prod_{j=1}^{d(y)} g(y,j).$$
\end{Col}
\begin{proof}
Воспользуемся Теоремой \ref{TH1} и посчитаем:
$$d(\varepsilon,x)=\sum_{i=0}^{|\varepsilon|}\left( {f\left(\varepsilon,i,h(\varepsilon,x)\right)}\prod_{j=1}^{d(x)}\left(g\left(x,j\right)-i\right)\right)= {f\left(\varepsilon,0,0\right)}\prod_{j=1}^{d(x)}\left(g\left(x,j\right)-0\right)=\prod_{j=1}^{d(y)} g(y,j).$$ 

\end{proof}

\subsection{Формула в явном виде}

Теперь мы хотим написать формулу для $f(x,y,z)$ не в рекурсивном, а в явном виде. В явном виде определены $f(x,y,z)$ при $z=0$. Давайте выразим $f(x,y,z)$ через различные $f(x',y',0)$.

Если $y=0$, то по Утверждению \ref{y01} $f(x,0,z)=f(x,0,0)$.

Пусть $y>0$.

\begin{Def}
При $x\in\mathbb{YF}$ первой функцией начала для $x$ назовём функцию
$$s(a)=s(x,a) \quad (a \in \{0,...,|x|\} ),$$
определённую следующим образом:

\begin{itemize}
    \item Если $\exists x',x''\in \mathbb{YF}: x=x'1x''$ и $|x''|=a$, то $s(a)=x'1$;
    \item Если $\exists x',x''\in \mathbb{YF}: x=x'2x''$ и $|x''|=a$, то $s(a)=x'11$;
    \item Если $\exists x',x''\in \mathbb{YF}: x=x'2x''$ и $|x''|=a-1$, то $s(a)=x'1$;
    \item $s(|x|)=\varepsilon$.
\end{itemize}

\end{Def}

\begin{Def}
При $x\in\mathbb{YF}$ второй функцией начала для $x$ назовём функцию
$$s'(a)=s'(x,a) \quad (dom \; a \subset \{0,...,|x|\}),$$
определённую следующим образом:

\begin{itemize}
    \item Если $\exists x',x''\in \mathbb{YF}: x'x''=x$ и $|x''|=a$, то $s'(a)=x'$;
    \item Если $\nexists x',x''\in \mathbb{YF}: x'x''=x$ и $|x''|=a$, то $s'(a)$ не определено.
\end{itemize}

\end{Def}


\begin{Ex}
$s$ и $s'$ при $x=122112$:

\begin{center}
\begin{tabular}{ | m{1cm} || m{2.5cm} | m{2.5cm} | } 
  \hline
 & $s(a,122112)$ & $s'(a,122112)$\\
  \hline \hline
$a=0$ & $1221111$ & $122112$\\
  \hline
$a=1$ & $122111$ & не определено\\ 
  \hline
$a=2$ & $12211$ & $12211$\\ 
  \hline
$a=3$ & $1221$ & $1221$\\ 
  \hline
$a=4$ & $1211$ & $122$\\ 
  \hline
$a=5$ & $121$ & не определено\\ 
  \hline
$a=6$ & $111$ & $12$\\ 
  \hline
$a=7$ & $11$ & не определено\\ 
  \hline
$a=8$ & $1$ & $1$\\ 
  \hline
$a=9$ & $\varepsilon$ & $\varepsilon$\\ 
  \hline
\end{tabular}
\end{center}
\end{Ex}

\begin{Alg} \label{algo}
Давайте применять рекурсивную формулу для $f$ много раз следующим образом: 

\renewcommand{\labelenumi}{\arabic{enumi}$^\circ$}

\begin{itemize}
    \item На каждом шаге мы имеем линейную комбинацию чисел вида $f(x'',y'',0)$ и числа вида $f(x',y',z')$, равную $f(x,y,z)$; при этом в данном случае мы говорим, что мы находимся в $x'\in \mathbb{YF}$ или в $f(x',y',z')$;
    \item Перед первым шагом мы имеем линейную комбинацию $f(x,y,z)=f(s(0),y,z)$ и находимся в $f(x,y,z)=f(s(0),y,z)$;
    \item На каждом шаге мы рассматриваем два варианта:
    \begin{enumerate}
        \item \label{type1} если находимся в $f(x'1,y',z')$ и линейная комбинация имеет вид 
        $$\sum{\frac{f(x'',y'',0)}{t''}}+\frac{f(x'1,y',z')}{t'},$$
        то мы переходим к $f(x',y'-1,z'-1)$ и линейной комбинации
        $$\sum{\frac{f(x'',y'',0)}{t''}}+\frac{f(x'1,y',0)}{t'''}+\frac{f(x',y'-1,z'-1)}{t'};$$
        \item \label{type2} если находимся в $f(x'2,y',z')$ при $y'>1$ и линейная комбинация имеет вид 
        $$\sum{\frac{f(x'',y'',0)}{t''}}+\frac{f(x'2,y',z')}{t'},$$
        то мы переходим к $f(x'11,y',z'+1)$ и линейной комбинации
        $$\sum{\frac{f(x'',y'',0)}{t''}}+\frac{f(x'11,y',z'+1)}{t'\cdot(1-y')}.$$
    \end{enumerate}
    \item Заканчиваем, когда приходим к числу вида $f(x',1,z')$ или к числу вида $f(x',y',0)$.
\end{itemize}
\end{Alg}

\begin{Zam}
При 
$$x=\underbrace{1...1}_{\beta_{d(x)}}2\underbrace{1...1}_{\beta_{d(x)-1}}2...2\underbrace{1...1}_{\beta_1}2\underbrace{1...1}_{\beta_0}$$
процесс, описанный в Алгоритме \ref{algo}, проходит следующим образом:
$$f(x,y,z)=f(s(0),y,z)=$$
$$=f(s(0),y,0)+f(s(1),y-1,z-1)=$$
$$=f(s(0),y,0)+f(s(1),y-1,0)+f(s(2),y-2,z-2)=$$
$$...$$
$$=f(s(0),y,0)+f(s(1),y-1,0)+...+f(s(\beta_0-1),y-(\beta_0-1),0)+f(s'(\beta_0),y-\beta_0,z-\beta_0)=$$
$$\left(\text{Тут мы приходим к } s'(\beta_0)=\underbrace{1...1}_{\beta_{d(x)}}2\underbrace{1...1}_{\beta_{d(x)-1}}2...2\underbrace{1...1}_{\beta_1}2\right)$$
$$=f(s(0),y,0)+f(s(1),y-1,0)+...+f(s(\beta_0-1),y-(\beta_0-1),0)+\frac{f(s(\beta_0),y-\beta_0,z-\beta_0+1)}{1-(y-\beta_0)}=$$
$$\left(\text{Тут мы переходим к } s(\beta_0)=\underbrace{1...1}_{\beta_{d(x)}}2\underbrace{1...1}_{\beta_{d(x)-1}}2...2\underbrace{1...1}_{\beta_1}11\right)$$
$$=f(s(0),y,0)+f(s(1),y-1,0)+...+f(s(\beta_0-1),y-(\beta_0-1),0)+$$$$+\frac{f(s(\beta_0),y-\beta_0,0)+f(s(\beta_0+1),y-\beta_0-1,z-\beta_0)}{1-(y-\beta_0)}=$$
$$\left(\text{Тут мы переходим к } s(\beta_0+1)=\underbrace{1...1}_{\beta_{d(x)}}2\underbrace{1...1}_{\beta_{d(x)-1}}2...2\underbrace{1...1}_{\beta_1}1\right)$$
$$=f(s(0),y,0)+f(s(1),y-1,0)+...+f(s(\beta_0-1),y-(\beta_0-1),0)+$$$$+\frac{f(s(\beta_0),y-\beta_0,0)+f(s(\beta_0+1),y-\beta_0-1,0)+f(s(\beta_0+2),y-\beta_0-2,z-\beta_0-1)}{1-(y-\beta_0)}$$
$$\left(\text{Тут мы переходим к } s(\beta_0+2)=\underbrace{1...1}_{\beta_{d(x)}}2\underbrace{1...1}_{\beta_{d(x)-1}}2...2\underbrace{1...1}_{\beta_1}\right).$$
После чего процесс продолжается в том же духе, пока мы не приходим либо к числу вида $f(x',1,z')$, либо к числу вида $f(x',y',0)$.
\end{Zam}

\renewcommand{\labelenumi}{\arabic{enumi}$)$}

Посмотрим, как устроен наш процесс.

\begin{Zam} \label{algozam}

$\;$

\begin{enumerate}
    
    \item Все числа, к которым мы приходим в процессе, описанным Алгоритмом \ref{algo}, принимают вид $f(s(a),y-a,z')$ или $f(s'(a),y-a,z')$.

    \item Переходы типа \ref{type2}$^\circ$ -- это переходы от $f(s(a),y-a,z')$ к $f(s'(a),y-a,z'+1)$ с делением на $1-(y-a)$, происходящие при $a=\beta_0$, $a=\beta_0+\beta_1+2$, $a=\beta_0+\beta_1+\beta_2+4$,..., то есть при $a=g(x,1)-1,g(x,2)-1,g(x,3)-1,...$. То есть при $j$-ом переходе типа \ref{type2}$^\circ$ деление происходит на $1-(y-(g(x,j)-1))=g(x,j)-y$.

    \item Переходы типа \ref{type1}$^\circ$ -- это переходы от $f(s(a),y-a,z')$ к $f(s(a+1),y-a-1,z'-1)$ (если $s(a)$ заканчивается на $11$) или от $f(s(a),y-a,z')$ к $f(s'(a+1),y-a-1,z'-1)$ (если $s(a)$ заканчивается на $21$).
\end{enumerate}

\end{Zam}

\begin{Nab} \label{nab1}
Пусть процесс, описанный Алгоритмом \ref{algo}, заканчивается переходом к числу вида $f(x',y',0)$.

Мы знаем (Замечание \ref{algozam}), что третья координата уменьшается на $1$ при переходе типа \ref{type1}$^\circ$ и увеличивается на $1$ при переходе типа \ref{type2}$^\circ$. То есть при последовательности из перехода типа \ref{type1}$^\circ$ и двух переходов типа \ref{type2}$^\circ$ (то есть при последовательности переходов $x'2\mapsto x'11\mapsto x'1\mapsto x'$) третья координата также уменьшается на 1. То есть процесс заканчивается после перехода к числу, равному $f(s'(a),y',0)$ при некотором $a$, причём заметим, что все числа, из которых состоит итоговая линейная комбинация, кроме, может быть, последнего, имеет вид $f(s(a),y',0)$. Из всего этого и Замечания \ref{algozam} следует, что в итоге линейная комбинация принимает следующий вид:
$$f(x,y,z)=\sum_{i=0}^{c-1}\frac{f(s(i),y-i,0)}{\prod_{j=1}^{d(i)}(g(x,j)-y)}+\frac{f(s'(c),y-c,0)}{\prod_{j=1}^{d(c)}(g(x,j)-y)}$$
при некоторых $c$, $d(i)$ и $d(c)$. Давайте найдём эти $c$, $d(i)$ и $d(c)$.

Заметим, что $d(i)$ -- это количество чисел вида $g(x,j)-1$, не превышающих $i$, то есть 
$$d(i)=\max_k\{g(x,k)-1\le i\},$$
а $d(c)$ - это количество чисел вида $g(x,j)-1$, строго меньших $c$, то есть
$$d(c)=\max_k\{g(x,k)-1 < c\}.$$

\begin{Def}
При $x\in \mathbb{YF}$ функцией суммы конца для $x$ назовём функцию
$$p(x,a)\quad (a\in \{0,...,\#x\}),$$
значение которой равно сумме $a$ последних цифр числа $x$.

То есть если мы рассмотрим представление $x=\alpha_1...\alpha_{\#x}$, где $\alpha_i \in \{1,2 \}$, то
$$p(x,a)=|\alpha_{\#x-a+1}\alpha_{\#x-a+2}...\alpha_{\#x}|.$$
\end{Def}

Ясно, что в данном случае $c$ -- это сумма последних $z$ цифр числа $x$, то есть 
$$c=p(x,z).$$
\end{Nab}

\begin{Nab} \label{nab2}
Пусть процесс, описанный Алгоритмом \ref{algo}, заканчивается переходом к числу вида $f(x',1,z')$ при $z' \ge 1$. 

Мы знаем, что вторая координата уменьшается на $1$ при переходе типа \ref{type1}$^\circ$ и не меняется при переходе типа \ref{type2}$^\circ$. То есть при последовательности из перехода типа \ref{type1}$^\circ$ и двух переходов типа \ref{type2}$^\circ$ (то есть при последовательности переходов $x'2\mapsto x'11\mapsto x'1\mapsto x'$) вторая координата уменьшается на 2. Таким образом, получается, что процесс заканчивается после перехода к числу вида $f(s(a),1,z')=0$ при некотором $a$ или к числу вида $f(s'(a),1,z')=0$ при некотором $a$ (число $f$, к которому мы в итоге приходим, равно нулю, так как $z'\ge 1$, а поэтому мы можем воспользоваться Утверждением \ref{y01}). Из всего этого и Замечания \ref{algozam} следует, что в итоге линейная комбинация принимает следующий вид:
$$f(x,y,z)=\sum_{i=0}^{c}\frac{f(s(i),y-i,0)}{\prod_{j=1}^{d(i)}(g(x,j)-y)}$$
при некоторых $c$ и $d(i)$. Давайте найдём эти $c$ и $d(i)$.

Заметим, что $d(i)$ -- это опять же количество чисел вида $g(x,j)-1$, не превосходящих $i$, то есть
$$d(i)=\max_k\{g(x,k)-1\le i\}.$$

Также заметим, что вторая координата уменьшается на $1$ в том и только в том случае, когда образовывается новое слагаемое $f$, уменьшается вторая координата до значения $1$, но при значении $1$ $f=0$, поэтому
$$c=y-2.$$

Деления на $0$ тут возникнуть не может, потому что оно может произойти только тогда, когда мы переходим от $f(s'(a),y-a,z')$ при $y-a=1$, то есть тогда, когда мы переходим от $f(s'(y-1),1,z')$. А если мы придём к $f(s'(y-1),1,z')$, то сразу же закончим процесс и не перейдём дальше, поэтому деления на $0$ не возникнет.
\end{Nab}

\begin{Nab} \label{nab3}
Давайте поймём, когда у нас процесс, описанный Алгоритмом \ref{algo}, заканчивается переходом к числу вида $f(x',y',0)$, и ответ из Наблюдения \ref{nab1} является верным, а когда он заканчивается переходом к числу вида $f(x',1,z')$ при $z'\ge 1$, и ответ из Наблюдения \ref{nab2} является верным.

Ясно, что мы действовали по одному алгоритму, поэтому ответ, который соответствует более раннему шагу алгоритма и является подходящим. То есть:
\begin{itemize}
    \item Если $p(x,z)<y-2$, то, очевидно, что это случай из Наблюдения \ref{nab1};

    \item Если $p(x,z)>y-2$, то, очевидно, что это случай из Наблюдения \ref{nab2};

    \item Если $p(x,z)=y-2$, то это случай из Наблюдения \ref{nab1}, так как в данном случае у нас одинаковое число слагаемых, но в первом случае у нас последнее слагаемое зависит от $s'$, а во втором случае последнее слагаемое зависит от $s$; в первом случае это более ранний шаг алгоритма.
\end{itemize}

\end{Nab}

\begin{Zam} \label{yavf}
$\;$
\begin{enumerate}
    \item У нас есть явная формула для $f(x,y,z)$;
    \item В ней $O(|x|)$ слагаемых.
\end{enumerate}
\end{Zam}
\begin{proof}
$\;$
\begin{enumerate}
    \item Очевидно из Наблюдений \ref{nab1}, \ref{nab2} и \ref{nab3}.
    \item Из Наблюдений \ref{nab1}, \ref{nab2} и \ref{nab3} ясно, что количество слагаемых в нашей формуле равняется 
    $$1+\min\{p(x,z),y-2\}.$$
    Несложно заметить, что $1+\min\{p(x,z),y-2\} \le y-1 \le |x|-1=O(|x|)$.
\end{enumerate}

\end{proof}

\begin{Zam}
$\;$
\begin{enumerate}
    \item У нас есть явная формула для $d(x,y)$;
    \item В ней $O(|x|^2)$ слагаемых.
\end{enumerate}
\end{Zam}
\begin{proof}
Из Замечания \ref{yavf} и того, что 
$$d(x,y)=\sum_{i=0}^{|x|}\left( {f\left(x,i,z\right)}\prod_{j=1}^{d(y)}\left(g\left(y,j\right)-i\right)\right),$$
мгновенно следует первая часть, а также то, что количество слагаемых в $d(x,y)$ равно $(|x|+1)O(|x|)=O(|x|^2)$.
\end{proof}

\newpage

\section{Количество $S$-путей между двумя вершинами в графе Юнга-Фибоначчи}

\subsection{Подготовка к формулировке двух теорем и их доказательству}

Теперь мы хотим обобщить Теорему \ref{TH1}. Для этого введём несколько новых определений и обозначений.

\begin{Def}
Пусть $X,Y\in \mathbb{N}_0$, $Y\ge X$. Тогда последовательность $\Delta=\{\Delta_i: i \in \{0,1,...,k\}\}$ при некотором $k \in \mathbb{N}_0$, назовём $(Y;X)$-траекторией, если выполняются следующие условия:
\begin{enumerate}
    \item $ \forall i \in \{1,...,k\} \quad \Delta_i-\Delta_{i-1} = \pm 1$;
    \item$ \forall i \in \{0,1,...,k\} \quad \Delta_i \ge  0 $;
    \item$ \Delta_0=Y $;
    \item$ \Delta_k=X$.
\end{enumerate}
\end{Def}

\begin{Oboz}
$\;$
\begin{itemize}
    \item Пусть $X,Y\in \mathbb{N}_0$. Тогда множество $(Y; X)$-траекторий обозначим за $\Omega(X,Y)$;
    \item Множество всех $(Y;X)$-траекторий при каких-то $X,Y\in \mathbb{N}_0$ обозначим за $\Omega$. 
\end{itemize}
\end{Oboz}

\begin{Def}

Пусть $\Delta=\{\Delta_i: i \in \{0,1,...,k\}\}$ $\in \Omega$. Тогда 
\begin{itemize}
    \item Длиной траектории $\Delta$ назовём величину $|\Delta|$, равную $k$;
    \item При $i\in \{1,...|\Delta|\}$ $i$-ым шагом траектории $\Delta$ будем называть величину $\delta_i$, равную $\Delta_i-\Delta_{i-1}$;
    \item При $i\in \{1,...|\Delta|\}$ $i$-ый шаг траектории $\Delta$ будем называть шагом ``вверх'', если $\delta_i=1$ и шагом ``вниз'', если $\delta_i=-1$.
\end{itemize}
\end{Def}

\begin{Oboz}
Пусть $\Delta\in\Omega$. Тогда 
\begin{itemize}
    \item Такое $i$, что $i$-ый шаг траектории $\Delta$ является $j$-ым шагом ``вверх'' траектории $\Delta$ будем обозначать за $\gamma(\Delta,j)$;
    \item Количество шагов ``вверх'' траектории $\Delta$, то есть величину, равную $|\{i:\delta_i=1\}|$ обозначим за $s(\Delta)$. 
\end{itemize}
\end{Oboz}

\begin{Zam}
Пусть $X,Y\in\mathbb{N}_0$ и $\Delta\in \Omega(X,Y)$. Тогда
\begin{enumerate}
    \item $$\sum_{i=1}^{|\Delta|} \delta_i = -Y+X;$$
    \item $\Delta$ содержит $s(\Delta)$ шагов ``вверх'' и $Y-X+s(\Delta)$ шагов ``вниз'';
    \item $|\Delta|=Y-X+2s(\Delta)$.
\end{enumerate}
\end{Zam}

\begin{Def}
Пусть $x,y \in \mathbb{YF}: |y|\ge |x|$, $\Delta\in\Omega(|x|,|y|)$. Тогда путь 
$$y=y_0y_1y_2...y_n=x$$
в графе Юнга-Фибоначчи назовём $\Delta$-путём, если $n=|\Delta|$ и $\forall i \in \{0,...,n\} \; |y_i|=\Delta_i$. Количество $yx$-$\Delta$-путей в $\mathbb{YF}$ будем обозначать как $d(x,y,\Delta)$.
\end{Def}

\begin{Zam}
Путь по $\mathbb{YF}$ является $\Delta$-путём при $s(\Delta)=0\Longleftrightarrow$ этот путь является путём ``вниз''.
\end{Zam}

\begin{Def} \label{sopr}
Пусть $x,y \in \mathbb{YF}: |y|\ge |x|$; $S\in \mathbb{N}_0$; $\Delta\in\Omega$, при этом путь
$$y=y_0y_1y_2...y_n=x$$
является $\Delta$-путём. Тогда скажем, что он является $S$-путём, если $s(\Delta)=S$. Количество $yx$-$S$-путей в $\mathbb{YF}$ будем обозначать как $d(x,y,S)$.
\end{Def}

\begin{Zam}
Множество путей ``вниз'' равно множеству $0$-путей.
\end{Zam}

Наша цель -- найти $d(x,y,S)$. Для начала будем искать $d(x,y,\Delta)$.

\begin{Def}
При $\Delta\in\Omega$ средней функцией для $\Delta$ назовём функцию
$$e(\Delta,x) \quad (x\in\{1,...,s(\Delta)\}),$$
определённую следующим образом:

\begin{itemize}
    \item $e(\Delta,1)=\Delta_{\gamma(\Delta,1)}$;
    \item $e(\Delta,2)=\Delta_{\gamma(\Delta,2)}$;
    \item $...$
    \item $e(\Delta,m)=\Delta_{\gamma(\Delta,m)}$;
    \item $...$
    \item $e(\Delta,s(\Delta))=\Delta_{\gamma(\Delta,s(\Delta))}$.
\end{itemize}
\end{Def}

\begin{Oboz}
Пусть $\Delta \in \Omega$. Тогда мультимножество 
$$\left\{e(\Delta,1),e(\Delta,2),...,e(\Delta,m),...,e(\Delta,s(\Delta)) \right\}$$
обозначим за $E(\Delta)$.
\end{Oboz}

\begin{Zam}
Пусть $\Delta \in \Omega$. Тогда 
$$|E(\Delta)|=s(\Delta).$$
\end{Zam}

\begin{Ex}
Пример траектории $\Delta\in\Omega$:

\begin{tikzpicture}
\draw[ultra thick, ->] (0.5,0) -- (0.5,7);
\draw[thin, dashed, ->] (1.5,0) -- (1.5,7);
\draw[thin, dashed, ->] (2.5,0) -- (2.5,7);
\draw[thin, dashed, ->] (3.5,0) -- (3.5,7);
\draw[thin, dashed, ->] (4.5,0) -- (4.5,7);
\draw[thin, dashed, ->] (5.5,0) -- (5.5,7);
\draw[thin, dashed, ->] (6.5,0) -- (6.5,7);
\draw[thin, dashed, ->] (7.5,0) -- (7.5,7);
\draw[thin, dashed, ->] (8.5,0) -- (8.5,7);
\draw[thin, dashed, ->] (9.5,0) -- (9.5,7);
\draw[thin, dashed, ->] (10.5,0) -- (10.5,7);
\draw[thin, dashed, ->] (11.5,0) -- (11.5,7);
\draw[thin, dashed, ->] (12.5,0) -- (12.5,7);
\draw[thin, dashed, ->] (13.5,0) -- (13.5,7);
\draw[thin, dashed, ->] (14.5,0) -- (14.5,7);

\draw[ultra thick, ->] (0,0.5) -- (15,0.5);
\draw[thin, dashed, ->] (0,1.5) -- (15,1.5);
\draw[thin, dashed, ->] (0,2.5) -- (15,2.5);
\draw[thin, dashed, ->] (0,3.5) -- (15,3.5);
\draw[thin, dashed, ->] (0,4.5) -- (15,4.5);
\draw[thin, dashed, ->] (0,5.5) -- (15,5.5);
\draw[thin, dashed, ->] (0,6.5) -- (15,6.5);

\draw[ultra thick, red, dotted, ->] (0.5,3.5) -- (1.5,2.5);
\draw[ultra thick, red, dotted, ->] (1.5,2.5) -- (2.5,1.5);
\draw[ultra thick, red, ->] (2.5,1.5) -- (3.5,2.5);
\draw[ultra thick, red, dotted, ->] (3.5,2.5) -- (4.5,1.5);
\draw[ultra thick, red, dotted, ->] (4.5,1.5) -- (5.5,0.5);
\draw[ultra thick, red, ->] (5.5,0.5) -- (6.5,1.5);
\draw[ultra thick, red, ->] (6.5,1.5) -- (7.5,2.5);
\draw[ultra thick, red, dotted, ->] (7.5,2.5) -- (8.5,1.5);
\draw[ultra thick, red, ->] (8.5,1.5) -- (9.5,2.5);
\draw[ultra thick, red, ->] (9.5,2.5) -- (10.5,3.5);
\draw[ultra thick, red, dotted, ->] (10.5,3.5) -- (11.5,2.5);
\draw[ultra thick, red, dotted, ->] (11.5,2.5) -- (12.5,1.5);
\draw[ultra thick, red, dotted, ->] (12.5,1.5) -- (13.5,0.5);
\draw[ultra thick, red, ->] (13.5,0.5) -- (14.5,1.5);

\path (0.25,0.25) node  {$0$}
-- (1.25,0.25)  node  {$1$}
-- (2.25,0.25)  node  {$2$}
-- (3.25,0.25)  node  {$3$}
-- (4.25,0.25)  node  {$4$}
-- (5.25,0.25)  node  {$5$}
-- (6.25,0.25)  node  {$6$}
-- (7.25,0.25)  node  {$7$}
-- (8.25,0.25)  node  {$8$}
-- (9.25,0.25)  node  {$9$}
-- (10.25,0.25)  node  {$10$}
-- (11.25,0.25)  node  {$11$}
-- (12.25,0.25)  node  {$12$}
-- (13.25,0.25)  node  {$13$}
-- (14.25,0.25)  node  {$14$};

\path (0.25,1.25)  node  {$1$}
-- (0.25,2.25)  node  {$2$}
-- (0.25,3.25)  node  {$3$}
-- (0.25,4.25)  node  {$4$}
-- (0.25,5.25)  node  {$5$}
-- (0.25,6.25)  node  {$6$};

\path (3.25,2.75) node [font=\Large,red]  {$2$}
-- (6.25,1.75) node [font=\Large,red]  {$1$}
-- (7.25,2.75) node [font=\Large,red]  {$2$}
-- (9.25,2.75) node [font=\Large,red]  {$2$}
-- (10.25,3.75) node [font=\Large,red]  {$3$}
-- (14.25,1.75) node [font=\Large,red]  {$1$};

\end{tikzpicture}

На рисунке изображена траектория $\Delta$:
$$\Delta_0=3,\;\Delta_1=2,\;\Delta_2=1,\;\Delta_3=2,\;\Delta_4=1,\;\Delta_5=0,\;\Delta_6=1,\;\Delta_7=2,$$
$$\Delta_8=1,\;\Delta_9=2,\;\Delta_{10}=3,\;\Delta_{11}=2,\;\Delta_{12}=1,\;\Delta_{13}=0,\;\Delta_{14}=1.$$

Свойства данной траектории:

\begin{itemize}
    \item Это $(3,1)$-траектория, иначе говоря, $\Delta\in\Omega(1,3)$;
    \item Длина траектории $|\Delta|=14$;
    \item $$\delta_1=-1,\;\delta_2=-1,\;\delta_3=1,\;\delta_4=-1,\;\delta_5=-1,\;\delta_6=1,\;\delta_7=1,$$
    $$\delta_8=-1,\;\delta_9=1,\;\delta_{10}=1,\;\delta_{11}=-1,\;\delta_{12}=-1,\;\delta_{13}=-1,\;\delta_{14}=1;$$
    \item $3$-ий, $6$-ой, $7$-ой, $9$-ый, $10$-ый и $14$-ый шаг траектории $\Delta$ -- шаги ``вверх'';
    \item $1$-ый, $2$-ой, $4$-ый, $5$-ый, $8$-ой, $11$-ый, $12$-ый и $13$-ый шаг траектории $\Delta$ -- шаги ``вниз'';
    \item $$\gamma(\Delta,1)=3,  \;\gamma(\Delta,2)=6, \gamma(\Delta,3)=7, \;\gamma(\Delta,4)=9, \;\gamma(\Delta,5)=10, \; \gamma(\Delta,6)=14;$$
    \item $s(\Delta)=6$;
    \item $$e(\Delta,1)=2,\;e(\Delta,1)=1,\;e(\Delta,1)=2,\;e(\Delta,1)=2,\;e(\Delta,1)=3,\;e(\Delta,1)=1;$$
    \item $E(\Delta)=\{1,1,2,2,2,3\}$.
\end{itemize}
\end{Ex}

Мы готовы к формулировке теоремы о количестве $yx$-$\Delta$ путей и её доказательству.

\subsection{Теорема о количестве $\Delta$-путей между двумя вершинами в графе Юнга-Фибоначчи}

\begin{theorem} \label{delta}
Пусть $x,y\in\mathbb{YF}:$ $|y|\ge |x|$, $z=h(x,y)$ и $\Delta\in\Omega(|x|,|y|)$. Тогда
$$d(x,y,\Delta)=\sum_{i=0}^{|x|}\left( {f\left(x,i,z\right)}\prod_{j=1}^{d(y)}\left(g\left(y,j\right)-i\right)\prod_{j=1}^{s(\Delta)}\left(e\left(\Delta,j\right)-i\right)\right).$$
\end{theorem}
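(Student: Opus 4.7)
I would prove Theorem \ref{delta} by induction on the length $|\Delta|$, writing for brevity $P(y,i) := \prod_{j=1}^{d(y)}(g(y,j)-i)$ and $Q(\Delta,i) := \prod_{j=1}^{s(\Delta)}(e(\Delta,j)-i)$. The base case $s(\Delta)=0$ reduces directly to Theorem \ref{TH1}, since $Q(\Delta,i)=1$ (empty product) and $\Delta$-paths coincide with paths ``вниз.''

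For the inductive step I would split on the sign of the first step $\delta_1$. When $\delta_1 = -1$, each $\Delta$-path decomposes as $y \to y_1 \to \cdots \to x$ with $y_1 \in R(y)$, and the tail is a $\Delta^{(1)}$-path (where $\Delta^{(1)}_j = \Delta_{j+1}$) from $y_1$ to $x$ of strictly smaller length. Since $s(\Delta^{(1)})=s(\Delta)$ and $e(\Delta^{(1)},j)=e(\Delta,j)$, the $Q$-factor is preserved. Summing the inductive formula $d(x,y_1,\Delta^{(1)})=\sum_i f(x,i,h(x,y_1))P(y_1,i)Q(\Delta,i)$ over $y_1 \in R(y)$, interchanging the order of summation, and applying Lemma \ref{1key} to collapse the inner sum to $f(x,i,z)P(y,i)$ produces the required formula.

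When $\delta_1 = +1$, the first vertex $w$ lies in $L(y)$ with $|w|=|y|+1=e(\Delta,1)$, and the tail is a $\Delta^{(2)}$-path with $s(\Delta^{(2)})=s(\Delta)-1$ and $e(\Delta^{(2)},j)=e(\Delta,j+1)$, so $Q(\Delta^{(2)},i)=Q(\Delta,i)/(|y|+1-i)$. Applying the inductive hypothesis and interchanging summations reduces the theorem to an ``upper-neighbor'' identity
$$\sum_{w \in L(y)} f(x,i,h(x,w))\,P(w,i) = (|y|+1-i)\,f(x,i,z)\,P(y,i),$$
a counterpart to Lemma \ref{1key} with $L(y)$ in place of $R(y)$ and the extra weight $|y|+1-i$ matching exactly the factor $e(\Delta,1)-i$ introduced by the new up-step.

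The chief obstacle is establishing this upper-neighbor identity. I would formulate it as a separate lemma and prove it by a nested induction on $|x|$, mirroring the structure of the proof of Lemma \ref{1key}. The case analysis splits according to the structure of $y$: if $y$ contains at least one $1$ then $L(y)$ has only two elements, namely (a) replacing the leftmost $1$ by $2$ and (b) inserting a $1$ to the left of the leftmost $1$, and the identity can be verified via the local recursive relations of Propositions \ref{z0} and \ref{y01}; if $y$ consists entirely of $2$s then $L(y)$ has $d(y)+1$ elements (insertions of a single $1$ at each position), and the identity collapses to a polynomial identity in $i$ that should follow from Corollary \ref{prom} and Lemma \ref{dim}, analogously to the all-$2$s base case in Lemma \ref{1key}. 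The most delicate bookkeeping will be tracking how $h(x,w)$ varies with $w$ (depending on whether the last digit of $x$ matches the last digit of $w$); I expect further splits on the last digits of $x$ and $y$, paralleling the eight-case analysis in Lemma \ref{1key}.
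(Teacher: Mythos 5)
Your induction skeleton (on $|\Delta|$, peeling off the first step) matches the paper's for two of its three cases, and your down-step case is essentially the paper's. But there are two genuine problems. First, your case split on $\delta_1$ is incomplete: when $\delta_1=-1$ \emph{and} $|y|=|x|$ (e.g.\ $x=y=1$, $\Delta=(1,0,1)$), the tail trajectory runs from $|y|-1<|x|$ to $|x|$, so the inductive hypothesis — and indeed the theorem statement itself, which requires $|y|\ge|x|$ — does not apply to $d(x,y_1,\Delta^{(1)})$. This is not a technicality: the paper devotes its entire third case to it, reversing the trajectory via $\overline{\Delta}$ and proving the nontrivial symmetry $f(x,i,z)\prod_j(g(y,j)-i)=f(y,i,z)\prod_j(g(x,j)-i)$ for $|x|=|y|$ (Lemmas \ref{xyz0}, \ref{xyz}), together with $E(\Delta)=E(\overline{\Delta})$ (Proposition \ref{obrat}) and the auxiliary Lemma \ref{nol} and Proposition \ref{uroven}. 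None of this machinery is anticipated in your plan.

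Second, your proposed proof of the upper-neighbor identity rests on a false premise: when $y$ contains a $1$, the set $L(y)$ does not have two elements — it has $d'(y)+2$ of them (one may insert a $1$ before \emph{any} of the $d'(y)$ leading twos or at the front, plus the replacement move); cf.\ the explicit list in Proposition \ref{2vnachale}. So the ``local verification'' you sketch would not go through as stated, and a fresh nested induction mirroring Lemma \ref{1key} would be a substantial undertaking. The identity itself is correct, and the paper obtains it essentially for free: by Proposition \ref{2vnachale} one has $L(y)=R(2y)$, so prepending a $2$ to $y$ converts the up-step into a down-step from $2y$; since $h(x,2y)=h(x,y)$ and $\prod_{j=1}^{d(2y)}(g(2y,j)-i)=(|y|+1-i)\prod_{j=1}^{d(y)}(g(y,j)-i)$, your identity is exactly Lemma \ref{1key} applied to the pair $(x,2y)$. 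Replacing your separate lemma by this observation (and inducting on the pair $(|\Delta|,s(\Delta))$, since this reduction preserves $|\Delta|$ while decreasing $s$) would repair the up-step case; the $|x|=|y|$, $\delta_1=-1$ case would still remain to be supplied.
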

\begin{proof}
Обозначим предполагаемый ответ за $o(x,y,\Delta)$.

Давайте докажем вспомогательное утверждение.

\begin{Prop} \label{2vnachale}
$$\forall y \in \mathbb{YF} \; L(y)=R(2y).$$  
\end{Prop}
\begin{proof}
\renewcommand{\labelenumi}{\arabic{enumi}$^\circ$}
$y':=2y$.

Рассмотрим два случая: 
\begin{enumerate}
    \item Пусть номер $y$ содержит хотя бы одну единицу.

\setlength{\unitlength}{0.20mm}
\begin{picture}(400,300)
\put(250,270){$y'=2221y''$}
\put(300,260){\vector(2,-1){200}}
\put(300,260){\vector(1,-2){50}}
\put(300,260){\vector(-1,-2){50}}
\put(300,260){\vector(-2,-1){200}}
\put(50,140){$y'_1=1221y''$}
\put(180,140){$y'_2=2121y''$}
\put(310,140){$y'_3=2211y''$}
\put(440,140){$y'_4=222y''$}
\put(90,130){\vector(2,-1){200}}
\put(245,130){\vector(1,-2){50}}
\put(355,130){\vector(-1,-2){50}}
\put(510,130){\vector(-2,-1){200}}
\put(270,10){$y=221y''$}
\end{picture}

На рисунке выше изображено, как могут выглядеть предки вершины $y'$ и потомки вершины $y$.

Представим $y$ в виде 
$$y=\underbrace{2...2}_{d'(y)}1y''.$$

Тогда 
$$y'=\underbrace{2...2}_{d'(y)+1}1y''.$$

Несложно убедиться в том, что $$L(y)=R(y')=\left\{1\underbrace{2...2}_{d'(y)}1y'',21\underbrace{2...2}_{d'(y)-1}1y'',...,\underbrace{2...2}_{k}1\underbrace{2...2}_{d'(y)-k}1y'',...,\underbrace{2...2}_{d'(y)}11y'',\underbrace{2...2}_{d'(y)+1}y''\right\}.$$ 

    \item Пусть номер $y$ не содержит ни одной единицы.

\setlength{\unitlength}{0.20mm}
\begin{picture}(400,300)
\put(250,270){$y'=2222$}
\put(300,260){\vector(2,-1){200}}
\put(300,260){\vector(1,-2){50}}
\put(300,260){\vector(-1,-2){50}}
\put(300,260){\vector(-2,-1){200}}
\put(50,140){$y'_1=1222$}
\put(180,140){$y'_2=2122$}
\put(310,140){$y'_3=2212$}
\put(440,140){$y'_4=2221$}
\put(90,130){\vector(2,-1){200}}
\put(245,130){\vector(1,-2){50}}
\put(355,130){\vector(-1,-2){50}}
\put(510,130){\vector(-2,-1){200}}
\put(270,10){$y=222$}
\end{picture}

На рисунке выше изображено, как могут выглядеть предки вершины $y'$ и потомки вершины $y$. 

В данном случае
$$y=\underbrace{2...2}_{d'(y)}$$
и
$$y'=\underbrace{2...2}_{d'(y)+1}.$$

Несложно убедиться в том, что $$L(y)=R(y')=\left\{1\underbrace{2...2}_{d'(y)},21\underbrace{2...2}_{d'(y)-1},...,\underbrace{2...2}_{k}1\underbrace{2...2}_{d'(y)-k},...,\underbrace{2...2}_{d'(y)-1}12,\underbrace{2...2}_{d'(y)}1\right\}.$$ 
\end{enumerate}
\end{proof}

Давайте доказывать теорему по индукции по $|\Delta|$, а при данном $|\Delta|$ по $s\left(\Delta\right)$.

\underline{\textbf{База}}: $\left|\Delta\right|=0$.

В данном случае мы рассматриваем только пути длины 0.

Очевидно, что $|y| \ge |x| \; \& \; s(\Delta) \ge 0 \; \& \; |\Delta|=|y|-|x|+2s(\Delta)=0$ $\Longrightarrow |x|=|y| \; \& \; s(\Delta)=0$. То есть мы рассматриваем пути ``вниз'' длины $0$ от $y$ до $x$, такие, что $|y|=|x|$.

Таким образом, понятно, что в данном случае 
$$d(x,y,\Delta)=d(x,y)=\sum_{i=0}^{|x|}\left( {f\left(x,i,z\right)}\prod_{j=1}^{d(y)}\left(g\left(y,j\right)-i\right)\right)=$$
$$=\sum_{i=0}^{|x|}\left( {f\left(x,i,z\right)}\prod_{j=1}^{d(y)}\left(g\left(y,j\right)-i\right)\prod_{j=1}^{0}\left(e\left(\Delta,j\right)-i\right)\right)=o(x,y,\Delta).$$

Что и требовалось.

\underline{\textbf{Переход}} к $|\Delta| \ge 1$.

Снова будем разбирать случаи.
\renewcommand{\labelenumi}{\arabic{enumi}$^\circ$}
\begin{enumerate}
    \item Пусть $|y|>|x|$ и $\delta_1=-1$.

Пусть $\forall k\in\{1,...,r(y)\} \quad z_k=h(x,y_k)$. 

Давайте введём $\Delta'$ следующим образом: 

\begin{tikzpicture}

\draw[ultra thick, ->] (0.5,0) -- (0.5,7);
\draw[thin, dashed, ->] (1.5,0) -- (1.5,7);
\draw[thin, dashed, ->] (2.5,0) -- (2.5,7);
\draw[thin, dashed, ->] (3.5,0) -- (3.5,7);
\draw[thin, dashed, ->] (4.5,0) -- (4.5,7);
\draw[thin, dashed, ->] (5.5,0) -- (5.5,7);
\draw[thin, dashed, ->] (6.5,0) -- (6.5,7);
\draw[thin, dashed, ->] (7.5,0) -- (7.5,7);
\draw[thin, dashed, ->] (8.5,0) -- (8.5,7);
\draw[thin, dashed, ->] (9.5,0) -- (9.5,7);
\draw[thin, dashed, ->] (10.5,0) -- (10.5,7);
\draw[thin, dashed, ->] (11.5,0) -- (11.5,7);
\draw[thin, dashed, ->] (12.5,0) -- (12.5,7);
\draw[thin, dashed, ->] (13.5,0) -- (13.5,7);
\draw[thin, dashed, ->] (14.5,0) -- (14.5,7);

\draw[ultra thick, ->] (0,0.5) -- (15,0.5);
\draw[thin, dashed, ->] (0,1.5) -- (15,1.5);
\draw[thin, dashed, ->] (0,2.5) -- (15,2.5);
\draw[thin, dashed, ->] (0,3.5) -- (15,3.5);
\draw[thin, dashed, ->] (0,4.5) -- (15,4.5);
\draw[thin, dashed, ->] (0,5.5) -- (15,5.5);
\draw[thin, dashed, ->] (0,6.5) -- (15,6.5);

\draw[ultra thick, red, ->] (0.5,4.5) -- (1.5,3.5);
\draw[ultra thick, red, ->] (1.5,3.5) -- (2.5,2.5);
\draw[ultra thick, red, ->] (2.5,2.5) -- (3.5,3.5);
\draw[ultra thick, red, ->] (3.5,3.5) -- (4.5,2.5);
\draw[ultra thick, red, ->] (4.5,2.5) -- (5.5,1.5);
\draw[ultra thick, red, ->] (5.5,1.5) -- (6.5,0.5);
\draw[ultra thick, red, ->] (6.5,0.5) -- (7.5,1.5);
\draw[ultra thick, red, ->] (7.5,1.5) -- (8.5,2.5);
\draw[ultra thick, red, ->] (8.5,2.5) -- (9.5,3.5);
\draw[ultra thick, red, ->] (9.5,3.5) -- (10.5,4.5);
\draw[ultra thick, red, ->] (10.5,4.5) -- (11.5,5.5);
\draw[ultra thick, red, ->] (11.5,5.5) -- (12.5,4.5);
\draw[ultra thick, red, ->] (12.5,4.5) -- (13.5,3.5);
\draw[ultra thick, red, ->] (13.5,3.5) -- (14.5,2.5);

\path (0.25,0.25) node  {$0$}
-- (1.25,0.25)  node  {$1$}
-- (2.25,0.25)  node  {$2$}
-- (3.25,0.25)  node  {$3$}
-- (4.25,0.25)  node  {$4$}
-- (5.25,0.25)  node  {$5$}
-- (6.25,0.25)  node  {$6$}
-- (7.25,0.25)  node  {$7$}
-- (8.25,0.25)  node  {$8$}
-- (9.25,0.25)  node  {$9$}
-- (10.25,0.25)  node  {$10$}
-- (11.25,0.25)  node  {$11$}
-- (12.25,0.25)  node  {$12$}
-- (13.25,0.25)  node  {$13$}
-- (14.25,0.25)  node  {$14$};

\path (0.25,1.25)  node  {$1$}
-- (0.25,2.25)  node  {$2$}
-- (0.25,3.25)  node  {$3$}
-- (0.25,4.25)  node  {$4$}
-- (0.25,5.25)  node  {$5$}
-- (0.25,6.25)  node  {$6$};
\end{tikzpicture}

$$$$

\begin{tikzpicture}
\draw[ultra thick, ->] (1.5,0) -- (1.5,7);
\draw[thin, dashed, ->] (0.5,0) -- (0.5,7);
\draw[thin, dashed, ->] (2.5,0) -- (2.5,7);
\draw[thin, dashed, ->] (3.5,0) -- (3.5,7);
\draw[thin, dashed, ->] (4.5,0) -- (4.5,7);
\draw[thin, dashed, ->] (5.5,0) -- (5.5,7);
\draw[thin, dashed, ->] (6.5,0) -- (6.5,7);
\draw[thin, dashed, ->] (7.5,0) -- (7.5,7);
\draw[thin, dashed, ->] (8.5,0) -- (8.5,7);
\draw[thin, dashed, ->] (9.5,0) -- (9.5,7);
\draw[thin, dashed, ->] (10.5,0) -- (10.5,7);
\draw[thin, dashed, ->] (11.5,0) -- (11.5,7);
\draw[thin, dashed, ->] (12.5,0) -- (12.5,7);
\draw[thin, dashed, ->] (13.5,0) -- (13.5,7);
\draw[thin, dashed, ->] (14.5,0) -- (14.5,7);

\draw[ultra thick, ->] (0,0.5) -- (15,0.5);
\draw[thin, dashed, ->] (0,1.5) -- (15,1.5);
\draw[thin, dashed, ->] (0,2.5) -- (15,2.5);
\draw[thin, dashed, ->] (0,3.5) -- (15,3.5);
\draw[thin, dashed, ->] (0,4.5) -- (15,4.5);
\draw[thin, dashed, ->] (0,5.5) -- (15,5.5);
\draw[thin, dashed, ->] (0,6.5) -- (15,6.5);

\draw[ultra thick, blue, ->] (1.5,3.5) -- (2.5,2.5);
\draw[ultra thick, blue, ->] (2.5,2.5) -- (3.5,3.5);
\draw[ultra thick, blue, ->] (3.5,3.5) -- (4.5,2.5);
\draw[ultra thick, blue, ->] (4.5,2.5) -- (5.5,1.5);
\draw[ultra thick, blue, ->] (5.5,1.5) -- (6.5,0.5);
\draw[ultra thick, blue, ->] (6.5,0.5) -- (7.5,1.5);
\draw[ultra thick, blue, ->] (7.5,1.5) -- (8.5,2.5);
\draw[ultra thick, blue, ->] (8.5,2.5) -- (9.5,3.5);
\draw[ultra thick, blue, ->] (9.5,3.5) -- (10.5,4.5);
\draw[ultra thick, blue, ->] (10.5,4.5) -- (11.5,5.5);
\draw[ultra thick, blue, ->] (11.5,5.5) -- (12.5,4.5);
\draw[ultra thick, blue, ->] (12.5,4.5) -- (13.5,3.5);
\draw[ultra thick, blue, ->] (13.5,3.5) -- (14.5,2.5);

\path (0.25,0.25)
-- (1.25,0.25)  node  {$0$}
-- (2.25,0.25)  node  {$1$}
-- (3.25,0.25)  node  {$2$}
-- (4.25,0.25)  node  {$3$}
-- (5.25,0.25)  node  {$4$}
-- (6.25,0.25)  node  {$5$}
-- (7.25,0.25)  node  {$6$}
-- (8.25,0.25)  node  {$7$}
-- (9.25,0.25)  node  {$8$}
-- (10.25,0.25)  node  {$9$}
-- (11.25,0.25)  node  {$10$}
-- (12.25,0.25)  node  {$11$}
-- (13.25,0.25)  node  {$12$}
-- (14.25,0.25)  node  {$13$};

\path (1.25,1.25)  node  {$1$}
-- (1.25,2.25)  node  {$2$}
-- (1.25,3.25)  node  {$3$}
-- (1.25,4.25)  node  {$4$}
-- (1.25,5.25)  node  {$5$}
-- (1.25,6.25)  node  {$6$};
\end{tikzpicture}

На рисунках изображено, как по траектории $\Delta$ (верхний рисунок) строится траектория $\Delta'$ (нижний рисунок). В данном случае первый шаг траектории $\Delta$ -- это шаг ``вниз'', и мы просто его убираем. Формально:
$$\left|\Delta'\right|=|\Delta|-1; \;  \forall i \in \{ 1,...,|\Delta'| \}\;\Delta'_i=\Delta_{i+1}.$$ 

Несложно заметить, что 
\begin{itemize}
    \item $\forall k \in \{1,...,r(y) \} \quad \Delta' \in \Omega(|x|,|y_k|)$;
    \item $s(\Delta)=s(\Delta')$;
    \item $\forall j \in \{1,...,s(\Delta)\} \quad e(\Delta,j)=e(\Delta',j)$;
    \item $E(\Delta)=E(\Delta')$;
    \item $|\Delta'|<|\Delta|$, то есть мы можем воспользоваться предположением индукции.
\end{itemize}

Но главное наблюдение заключается в том, что $\forall k\in\{1,...,r(y)\}$ существует биекция между $\Delta$-путями $yy_k...x$ и $\Delta'$-путями $y_k...x$. Давайте считать:
$$d(x,y,\Delta)=\sum_{k=1}^{r(y)}d(x,y_k,\Delta')=(\text{По предположению})=\sum_{k=1}^{r(y)}o\left(x,y_k,\Delta'\right)=$$
$$=\sum_{k=1}^{r(y)} \left(\sum_{i=0}^{|x|}\left( {f\left(x,i,z\right)}\prod_{j=1}^{d\left(y_k\right)}\left(g\left(y_k,j\right)-i\right)\prod_{j=1}^{s\left(\Delta'\right)}\left(e\left(\Delta',j\right)-i\right)\right)\right)= $$
$$=\sum_{i=0}^{|x|}\left(\sum_{k=1}^{r(y)}\left( {f\left(x,i,z\right)}\prod_{j=1}^{d(y_k)}\left(g\left(y_k,j\right)-i\right)\prod_{j=1}^{s\left(\Delta'\right)}\left(e\left(\Delta',j\right)-i\right)\right)\right)=$$
$$=\sum_{i=0}^{|x|}\left(\sum_{k=1}^{r(y)}\left( {f\left(x,i,z_k\right)}\prod_{j=1}^{d(y_k)}\left(g\left(y_k,j\right)-i\right)\prod_{j=1}^{s\left(\Delta\right)}\left(e\left(\Delta,j\right)-i\right)\right)\right)=$$
$$=\text{(По лемме \ref{main})}=$$
$$=\sum_{i=0}^{|x|}\left({f\left(x,i,z\right)}\prod_{j=1}^{d(y)}\left(g\left(y,j\right)-i\right)\prod_{j=1}^{s\left(\Delta\right)}\left(e\left(\Delta,j\right)-i\right)\right)=o\left(x,y,\Delta \right).$$

Что и требовалось.
    
\item

Пусть $|y|\ge|x|$ и $\delta_1=1$.

Пусть $y'=2y$ и $\forall k\in\{1,...,r(y)\} \quad z_k=h(x,y_k)$. 

Давайте введём $\Delta'$ следующим образом: 

\begin{tikzpicture}

\draw[ultra thick, ->] (0.5,0) -- (0.5,7);
\draw[thin, dashed, ->] (1.5,0) -- (1.5,7);
\draw[thin, dashed, ->] (2.5,0) -- (2.5,7);
\draw[thin, dashed, ->] (3.5,0) -- (3.5,7);
\draw[thin, dashed, ->] (4.5,0) -- (4.5,7);
\draw[thin, dashed, ->] (5.5,0) -- (5.5,7);
\draw[thin, dashed, ->] (6.5,0) -- (6.5,7);
\draw[thin, dashed, ->] (7.5,0) -- (7.5,7);
\draw[thin, dashed, ->] (8.5,0) -- (8.5,7);
\draw[thin, dashed, ->] (9.5,0) -- (9.5,7);
\draw[thin, dashed, ->] (10.5,0) -- (10.5,7);
\draw[thin, dashed, ->] (11.5,0) -- (11.5,7);
\draw[thin, dashed, ->] (12.5,0) -- (12.5,7);
\draw[thin, dashed, ->] (13.5,0) -- (13.5,7);
\draw[thin, dashed, ->] (14.5,0) -- (14.5,7);

\draw[ultra thick, ->] (0,0.5) -- (15,0.5);
\draw[thin, dashed, ->] (0,1.5) -- (15,1.5);
\draw[thin, dashed, ->] (0,2.5) -- (15,2.5);
\draw[thin, dashed, ->] (0,3.5) -- (15,3.5);
\draw[thin, dashed, ->] (0,4.5) -- (15,4.5);
\draw[thin, dashed, ->] (0,5.5) -- (15,5.5);
\draw[thin, dashed, ->] (0,6.5) -- (15,6.5);

\draw[ultra thick, red, ->] (0.5,2.5) -- (1.5,3.5);
\draw[ultra thick, red, ->] (1.5,3.5) -- (2.5,2.5);
\draw[ultra thick, red, ->] (2.5,2.5) -- (3.5,3.5);
\draw[ultra thick, red, ->] (3.5,3.5) -- (4.5,2.5);
\draw[ultra thick, red, ->] (4.5,2.5) -- (5.5,1.5);
\draw[ultra thick, red, ->] (5.5,1.5) -- (6.5,0.5);
\draw[ultra thick, red, ->] (6.5,0.5) -- (7.5,1.5);
\draw[ultra thick, red, ->] (7.5,1.5) -- (8.5,2.5);
\draw[ultra thick, red, ->] (8.5,2.5) -- (9.5,3.5);
\draw[ultra thick, red, ->] (9.5,3.5) -- (10.5,4.5);
\draw[ultra thick, red, ->] (10.5,4.5) -- (11.5,5.5);
\draw[ultra thick, red, ->] (11.5,5.5) -- (12.5,4.5);
\draw[ultra thick, red, ->] (12.5,4.5) -- (13.5,3.5);
\draw[ultra thick, red, ->] (13.5,3.5) -- (14.5,2.5);

\path (0.25,0.25) node  {$0$}
-- (1.25,0.25)  node  {$1$}
-- (2.25,0.25)  node  {$2$}
-- (3.25,0.25)  node  {$3$}
-- (4.25,0.25)  node  {$4$}
-- (5.25,0.25)  node  {$5$}
-- (6.25,0.25)  node  {$6$}
-- (7.25,0.25)  node  {$7$}
-- (8.25,0.25)  node  {$8$}
-- (9.25,0.25)  node  {$9$}
-- (10.25,0.25)  node  {$10$}
-- (11.25,0.25)  node  {$11$}
-- (12.25,0.25)  node  {$12$}
-- (13.25,0.25)  node  {$13$}
-- (14.25,0.25)  node  {$14$};

\path (0.25,1.25)  node  {$1$}
-- (0.25,2.25)  node  {$2$}
-- (0.25,3.25)  node  {$3$}
-- (0.25,4.25)  node  {$4$}
-- (0.25,5.25)  node  {$5$}
-- (0.25,6.25)  node  {$6$};
\end{tikzpicture}

$$$$

\begin{tikzpicture}
\draw[ultra thick, ->] (0.5,0) -- (0.5,7);
\draw[thin, dashed, ->] (1.5,0) -- (1.5,7);
\draw[thin, dashed, ->] (2.5,0) -- (2.5,7);
\draw[thin, dashed, ->] (3.5,0) -- (3.5,7);
\draw[thin, dashed, ->] (4.5,0) -- (4.5,7);
\draw[thin, dashed, ->] (5.5,0) -- (5.5,7);
\draw[thin, dashed, ->] (6.5,0) -- (6.5,7);
\draw[thin, dashed, ->] (7.5,0) -- (7.5,7);
\draw[thin, dashed, ->] (8.5,0) -- (8.5,7);
\draw[thin, dashed, ->] (9.5,0) -- (9.5,7);
\draw[thin, dashed, ->] (10.5,0) -- (10.5,7);
\draw[thin, dashed, ->] (11.5,0) -- (11.5,7);
\draw[thin, dashed, ->] (12.5,0) -- (12.5,7);
\draw[thin, dashed, ->] (13.5,0) -- (13.5,7);
\draw[thin, dashed, ->] (14.5,0) -- (14.5,7);

\draw[ultra thick, ->] (0,0.5) -- (15,0.5);
\draw[thin, dashed, ->] (0,1.5) -- (15,1.5);
\draw[thin, dashed, ->] (0,2.5) -- (15,2.5);
\draw[thin, dashed, ->] (0,3.5) -- (15,3.5);
\draw[thin, dashed, ->] (0,4.5) -- (15,4.5);
\draw[thin, dashed, ->] (0,5.5) -- (15,5.5);
\draw[thin, dashed, ->] (0,6.5) -- (15,6.5);

\draw[ultra thick, blue, ->] (0.5,4.5) -- (1.5,3.5);
\draw[ultra thick, blue, ->] (1.5,3.5) -- (2.5,2.5);
\draw[ultra thick, blue, ->] (2.5,2.5) -- (3.5,3.5);
\draw[ultra thick, blue, ->] (3.5,3.5) -- (4.5,2.5);
\draw[ultra thick, blue, ->] (4.5,2.5) -- (5.5,1.5);
\draw[ultra thick, blue, ->] (5.5,1.5) -- (6.5,0.5);
\draw[ultra thick, blue, ->] (6.5,0.5) -- (7.5,1.5);
\draw[ultra thick, blue, ->] (7.5,1.5) -- (8.5,2.5);
\draw[ultra thick, blue, ->] (8.5,2.5) -- (9.5,3.5);
\draw[ultra thick, blue, ->] (9.5,3.5) -- (10.5,4.5);
\draw[ultra thick, blue, ->] (10.5,4.5) -- (11.5,5.5);
\draw[ultra thick, blue, ->] (11.5,5.5) -- (12.5,4.5);
\draw[ultra thick, blue, ->] (12.5,4.5) -- (13.5,3.5);
\draw[ultra thick, blue, ->] (13.5,3.5) -- (14.5,2.5);

\path (0.25,0.25) node  {$0$}
-- (1.25,0.25)  node  {$1$}
-- (2.25,0.25)  node  {$2$}
-- (3.25,0.25)  node  {$3$}
-- (4.25,0.25)  node  {$4$}
-- (5.25,0.25)  node  {$5$}
-- (6.25,0.25)  node  {$6$}
-- (7.25,0.25)  node  {$7$}
-- (8.25,0.25)  node  {$8$}
-- (9.25,0.25)  node  {$9$}
-- (10.25,0.25)  node  {$10$}
-- (11.25,0.25)  node  {$11$}
-- (12.25,0.25)  node  {$12$}
-- (13.25,0.25)  node  {$13$}
-- (14.25,0.25)  node  {$14$};

\path (0.25,1.25)  node  {$1$}
-- (0.25,2.25)  node  {$2$}
-- (0.25,3.25)  node  {$3$}
-- (0.25,4.25)  node  {$4$}
-- (0.25,5.25)  node  {$5$}
-- (0.25,6.25)  node  {$6$};
\end{tikzpicture}

На рисунках выше изображено, как по траектории $\Delta$ (верхний рисунок) строится траектория $\Delta'$ (нижний рисунок). В данном случае первый шаг траектории $\Delta$ -- это шаг ``вверх'', и мы просто меняем его на шаг ``вниз'', при это оставляя все значения $\Delta_i$, кроме $\Delta_1$ прежними. Формально:
$$|\Delta'|=|\Delta|; \; \Delta'_1=\Delta_1+2; \;  \forall i \in \left\{ 1,...,|\Delta'| \right\} \; \Delta'_i=\Delta_{i}.$$ 

Несложно заметить, что 
\begin{itemize}
    \item $ \Delta'\in \Omega(|x|,|y'|)$;
    \item $s(\Delta')=s(\Delta)-1$;
    \item $\forall j \in \{1,...,s(\Delta')\}\quad e(\Delta',j)=e(\Delta,j+1)$;
    \item ${|y|+1}\cup E(\Delta')=E(\Delta)$;
    \item $|\Delta'|=|\Delta|$ и $s(\Delta')<s(\Delta)$, то есть мы можем воспользоваться предположением индукции.
\end{itemize}

Но главное наблюдение заключается в том, что по Утверждению \ref{2vnachale} $\forall k\in\{1,...,r(y)\}$  существует биекция между $\Delta$-путями $yy_k...x$ и $\Delta'$-путями $y'y_k...x$. Давайте считать:
$$d(x,y,\Delta)=d\left(x,y',\Delta'\right)=$$
$$=\sum_{i=0}^{|x|}\left( {f\left(x,i,z\right)}\prod_{j=1}^{d\left(y'\right)}\left(g\left(y',j\right)-i\right)\prod_{j=1}^{s\left(\Delta'\right)}\left(e\left(\Delta',j\right)-i\right)\right)=$$
$$=\sum_{i=0}^{|x|}\left( {f\left(x,i,z\right)}\prod_{j=1}^{d\left(y'\right)}\left(g\left(y',j\right)-i\right)\prod_{j=2}^{s(\Delta)}\left(e\left(\Delta,j\right)-i\right)\right)=$$
$$=\text{(Знаем, что $|y| \ge |x| \ge |i| \Longrightarrow |y|+1>i \Longrightarrow |y|+1-i \ne 0$)}=$$
$$=\sum_{i=0}^{|x|}\left( {f\left(x,i,z\right)}\left(\prod_{j=1}^{d(y)}\left(g\left(y,j\right)-i\right)\right)(|y|+1-i) \frac{\prod_{j=1}^{s(\Delta)}\left(e\left(\Delta,j\right)-i\right)}{|y|+1-i}\right)=$$
$$=\sum_{i=0}^{|x|}\left( {f\left(x,i,z\right)}\prod_{j=1}^{d(y)}\left(g\left(y,j\right)-i\right)\prod_{j=1}^{s(\Delta)}\left(e\left(\Delta,j\right)-i\right)\right)=o(x,y,\Delta).$$
Второй случай разобран.

Теперь перед переходом к следующему случаю давайте докажем ещё вспомогательные утверждения:
\renewcommand{\labelenumii}{\arabic{enumii}$)$}

\begin{Prop} \label{drob}
$\;$

\begin{enumerate}
    \item Пусть $x\in\mathbb{YF}$ представляется в виде $x=\alpha_1...\alpha_A$, где $\alpha_j \in \{1,2 \}$. Тогда
$$\prod_{j=1}^{d(x)}g\left(x,j\right)=\frac{|x|!}{(\alpha_{A})(\alpha_{A}+\alpha_{A-1})...(\alpha_{A}+...+\alpha_1)};$$

    \item
Пусть $x\in\mathbb{YF}$ представляется в виде $x=\alpha_1...\alpha_a\alpha_{a+1}...\alpha_A$, где $|\alpha_{a+1}...\alpha_A|=i, \alpha_j \in \{1,2 \}$. Тогда
$$\prod_{j=1}^{d(x)}\left(g\left(x,j\right)-i\right)=\prod_{j\in \{1,...,|x| \} \textbackslash i}\left( j-i\right)\frac{1}{(\alpha_{A}-i)(\alpha_{A}+\alpha_{A-1}-i)...(\alpha_{A}+...+\alpha_{a+2}-i)}\cdot$$
$$\cdot\frac{1}{(\alpha_{A}+...+\alpha_{a}-i)(\alpha_{A}+...+\alpha_{a-1}-i)...(\alpha_{A}+...+\alpha_{1}-i)}.$$
\end{enumerate}

\end{Prop}
\begin{proof}
$\;$

\begin{enumerate}
    \item

Введём временное обозначение: при данном представлении $x$
$$P(x):=(\alpha_{A})(\alpha_{A}+\alpha_{A-1})...(\alpha_{A}+...+\alpha_1).$$

Ясно, что каждый множитель в правой части положителен, поэтому при данных $P(x)\ne 0$.

Поэтому наше Утверждение равносильно тому, что
$$P(x)\prod_{j=1}^{d(x)}g\left(x,j\right)=|x|!.$$

Рассмотрим представление $x$ в виде $$x=\underbrace{1...1}_{\beta_{d(x)}}2\underbrace{1...1}_{\beta_{d(x)-1}}2...2\underbrace{1...1}_{\beta_1}2\underbrace{1...1}_{\beta_0},$$
и вспомним определение функции $g$:
\begin{itemize}
    \item $g(x,1)=\beta_0+1$;
    \item $g(x,2)=\beta_0+\beta_1+3$;
    \item ...
    \item $g(x,m)=\beta_0+...+\beta_{m-1}+2m-1$;
    \item ...
    \item $g(x,d(x))=\beta_0+...+\beta_{d(x)-1}+2d(x)-1$.
\end{itemize}

Множители $P(x)$ имеют вид:
\begin{itemize}
    \item 1;
    \item 2;
    \item ...
    \item $\beta_0$;
    \item $\beta_0+2$;
    \item $\beta_0+2+1$;
    \item $\beta_0+2+2$;
    \item ...
    \item $\beta_0+2+\beta_1$;
    \item $\beta_0+2+\beta_1+2$;
    \item ... ...
    \item $\beta_0+2+\beta_1+2+...+\beta_{d(x)-1}+2+\beta_{d(x)}$.
\end{itemize}

Ясно, что эти числа вместе -- это в точности числа $1$,...,$|x|$. Отсюда и следует Утверждение. 
    
    \item

Введём новое временное обозначение: при данном представлении $x$
$$P(x):=(\alpha_{A}-i)(\alpha_{A}+\alpha_{A-1}-i)...(\alpha_{A}+...+\alpha_{a+2}-i)\cdot$$
$$\cdot(\alpha_{A}+...+\alpha_{a}-i)(\alpha_{A}+...+\alpha_{a-1}-i)...(\alpha_{A}+...+\alpha_{1}-i).$$

Ясно, что каждый множитель в верхней строке правой части отрицателен, а в нижней -- положителен, поэтому при данных $x\in\mathbb{YF}$ $P(x)\ne 0$.

Поэтому наше Утверждение равносильно тому, что
$$P(x)\prod_{j=1}^{d(x)}\left(g\left(x,j\right)-i\right)=\prod_{j\in \{1,...,|x| \} \textbackslash i}\left( j-i\right).$$

Рассмотрим представление $x$ в виде $$x=\underbrace{1...1}_{\beta_{d(x)}}2\underbrace{1...1}_{\beta_{d(x)-1}}2...2\underbrace{1...1}_{\beta_1}2\underbrace{1...1}_{\beta_0},$$
и вспомним определение функции $g$:
\begin{itemize}
    \item $g(x,1)-i=\beta_0+1-i;$
    \item $g(x,2)-i=\beta_0+\beta_1+3-i;$
    \item ...
    \item $g(x,m)-i=\beta_0+...+\beta_{m-1}+2m-1-i;$
    \item ...
    \item $g(x,d(x))-i=\beta_0+...+\beta_{d(x)-1}+2d(x)-1-i.$
\end{itemize}

Числа вида $\alpha_A+...+\alpha_k$ имеют вид
\begin{itemize}
    \item 1-$i$;
    \item 2-$i$;
    \item ...
    \item $\beta_0-i$;
    \item $\beta_0+2-i$;
    \item $\beta_0+2+1-i$;
    \item $\beta_0+2+2-i$;
    \item ...
    \item $\beta_0+2+\beta_1-i$;
    \item $\beta_0+2+\beta_1+2-i$;
    \item ... ...
    \item $\beta_0+2+\beta_1+2+...+\beta_{d(x)-1}+2+\beta_{d(x)}-i$.
\end{itemize}

Ясно, что эти числа вместе -- это в точности числа $(1-i)$,...,$(|x|-i)$. Кроме того, несложно убедиться в том, что множители левой части равенства 
$$P(x)\prod_{j=1}^{d(x)}\left(g\left(x,j\right)-i\right)=\prod_{j\in \{1,...,|x| \} \textbackslash i}\left( j-i\right)$$
-- это те же самые числа, кроме числа $\alpha_{a+1}+...+\alpha_{A+1}-i=0$ (находящегося во второй группе чисел). То есть эти множители -- это числа $1-i$,...,$|x|-i$, кроме $i-i$. Отсюда Утверждение мгновенно следует. 
\end{enumerate}
\end{proof}

\begin{Lemma} \label{xyz0}
Пусть $x,y \in \mathbb{YF}:$ $|x|=|y|$. Тогда $\forall i \in\{1,...,|x|\}$
$$ {f\left(x,i,0\right)}\prod_{j=1}^{d(y)}\left(g\left(y,j\right)-i\right)={f\left(y,i,0\right)}\prod_{j=1}^{d(x)}\left(g\left(x,j\right)-i\right).$$ 
\end{Lemma}
\begin{proof}

Рассмотрим пять случаев:
\renewcommand{\labelenumi}{\roman{enumi}$^\circ$}
\renewcommand{\labelenumii}{\roman{enumii}$^\circ$}

\begin{enumerate}
    \item Пусть $i=0$.
    
    Рассмотрим представление $x$ в виде $x=\alpha_1...\alpha_a$, где $\alpha_j \in \{1,2 \}$ и представление $y$ в виде $y=\beta_1...\beta_b$, где $\beta_j \in \{1,2 \}$. Тогда 
$${f\left(x,0,0\right)}\prod_{j=1}^{d(y)}g\left(y,j\right)=\frac{1}{(\alpha_a)(\alpha_a+\alpha_{a-1})...(\alpha_a+...+\alpha_1)}\prod_{j=1}^{d(y)}g\left(y,j\right)=$$
$$= (\text{По Утверждению \ref{drob}})=$$
$$=\frac{1}{(\alpha_a)(\alpha_a+\alpha_{a-1})...(\alpha_a+...+\alpha_1)}\cdot \frac{|y|!}{(\beta_{b})(\beta_{b}+\beta_{b-1})...(\beta_{b}+...+\beta_{1})}=$$
$$=\frac{|x|!}{(\alpha_a)(\alpha_a+\alpha_{a-1})...(\alpha_a+...+\alpha_1)}\cdot \frac{1}{(\beta_{b})(\beta_{b}+\beta_{b-1})...(\beta_{b}+...+\beta_{1})}=$$
$$=(\text{По Утверждению \ref{drob}})=$$
$$= \frac{1}{(\beta_{b})(\beta_{b}+\beta_{b-1})...(\beta_{b}+...+\beta_{1})}\prod_{j=1}^{d(x)}g\left(x,j\right)= f(y,0,0)\prod_{j=1}^{d(x)}g\left(x,j\right).$$
Что и требовалось.

\item Пусть $i\ne 0$, и $x$ представляется в виде $x=\alpha_1...\alpha_a\alpha_{a+1}...\alpha_A$, где $|\alpha_{a+1}...\alpha_A|=i, \; \alpha_j \in \{1,2 \}$, а $y$ представляется в виде $y=\beta_1...\beta_b\beta_{b+1}...\beta_B$, где $|\beta_{b+1}...\beta_B|=i, \; \beta_j \in \{1,2 \}$. 

Тогда 
$${f\left(x,i,0\right)}\prod_{j=1}^{d(y)}\left(g\left(y,j\right)-i\right)=\frac{1}{(-\alpha_{a+1})(-\alpha_{a+1}-\alpha_{a+2})...(-\alpha_{a+1}-...-\alpha_{A})}\cdot$$
$$\cdot\frac{1}{(\alpha_a)(\alpha_a+\alpha_{a-1})...(\alpha_a+...+\alpha_1)}\prod_{j=1}^{d(y)}\left(g\left(y,j\right)-i\right)=(\text{По Утверждению \ref{drob}})=$$
$$=\frac{1}{(-\alpha_{a+1})(-\alpha_{a+1}-\alpha_{a+2})...(-\alpha_{a+1}-...-\alpha_{A})}\cdot\frac{1}{(\alpha_a)(\alpha_a+\alpha_{a-1})...(\alpha_a+...+\alpha_1)}\cdot$$
$$\cdot\prod_{j\in \{1,...,|y| \} \textbackslash i}\left( j-i\right)\frac{1}{(\beta_{B}-i)(\beta_{B}+\beta_{B-1}-i)...(\beta_{B}+...+\beta_{b+2}-i)}\cdot$$
$$\cdot\frac{1}{(\beta_{B}+...+\beta_{b}-i)(\beta_{B}+...+\beta_{b-1}-i)...(\beta_{B}+...+\beta_{1}-i)}=$$
$$=\frac{1}{(-\alpha_{a+1})(-\alpha_{a+1}-\alpha_{a+2})...(-\alpha_{a+1}-...-\alpha_{A})}\cdot\frac{1}{(\alpha_a)(\alpha_a+\alpha_{a-1})...(\alpha_a+...+\alpha_1)}\cdot$$
$$\cdot\prod_{j\in \{1,...,|y| \} \textbackslash i}\left( j-i\right)\frac{1}{(-\beta_{B-1}-...-\beta_{b+1})(-\beta_{B-2}-...-\beta_{b+1})...(-\beta_{b+1})}\cdot$$
$$\cdot\frac{1}{(\beta_{b})(\beta_{b}+\beta_{b-1})...(\beta_{b}+...+\beta_{1})}=$$
$$=\frac{1}{(-\alpha_{a+1})(-\alpha_{a+1}-\alpha_{a+2})...(-\alpha_{a+1}-...-\alpha_{A})}\cdot\frac{1}{(\alpha_a)(\alpha_a+\alpha_{a-1})...(\alpha_a+...+\alpha_1)}\cdot$$
$$\cdot\prod_{j\in \{0,1,...,|y| \} \textbackslash i}\left( j-i\right)\frac{1}{(-\beta_{B}-...-\beta_{b+1})(-\beta_{B-1}-...-\beta_{b+1})(-\beta_{B-2}-...-\beta_{b+1})...(-\beta_{b+1})}\cdot$$
$$\cdot\frac{1}{(\beta_{b})(\beta_{b}+\beta_{b-1})...(\beta_{b}+...+\beta_{1})}=$$
$$=\frac{1}{(-\beta_{b+1})(-\beta_{b+1}-\beta_{b+2})...(-\beta_{b+1}-...-\beta_{B})}\cdot\frac{1}{(\beta_b)(\beta_b+\beta_{b-1})...(\beta_b+...+\beta_1)}\cdot$$
$$\cdot\prod_{j\in \{1,...,|x| \} \textbackslash i}\left( j-i\right)\frac{1}{(-\alpha_{A-1}-...-\alpha_{a+1})(-\alpha_{A-2}-...-\alpha_{a+1})...(-\alpha_{a+1})}\cdot$$
$$\cdot\frac{1}{(\alpha_{a})(\alpha_{a}+\alpha_{a-1})...(\alpha_{a}+...+\alpha_{1})}=$$
$$=\frac{1}{(-\beta_{b+1})(-\beta_{b+1}-\beta_{b+2})...(-\beta_{b+1}-...-\beta_{B})}\cdot\frac{1}{(\beta_b)(\beta_b+\beta_{b-1})...(\beta_b+...+\beta_1)}\cdot$$
$$\cdot\prod_{j\in \{1,...,|x| \} \textbackslash i}\left( j-i\right)\frac{1}{(\alpha_{A}-i)(\alpha_{A}+\alpha_{A-1}-i)...(\alpha_{A}+...+\alpha_{a+2}-i)}\cdot$$
$$\cdot\frac{1}{(\alpha_{A}+...+\alpha_{a}-i)(\alpha_{A}+...+\alpha_{a-1}-i)...(\alpha_{A}+...+\alpha_{1}-i)}=$$
$$=(\text{По Утверждению \ref{drob}})=$$
$$=\frac{1}{(-\beta_{b+1})(-\beta_{b+1}-\beta_{b+2})...(-\beta_{b+1}-...-\beta_{B})}\cdot\frac{1}{(\beta_b)(\beta_b+\beta_{b-1})...(\beta_b+...+\beta_1)}\prod_{j=1}^{d(x)}\left(g\left(x,j\right)-i\right)=$$
$$=f(y,i,0)\prod_{j=1}^{d(x)}\left(g\left(x,j\right)-i\right).$$
Что и требовалось.

\item Пусть $i\ne 0$, и $x$ представляется в виде $x=\alpha_1...\alpha_a\alpha_{a+1}...\alpha_A$, где $|\alpha_{a+1}...\alpha_A|=i, \; \alpha_j \in \{1,2 \}$, а $y$ не представляется в виде $y=\beta_1...\beta_b\beta_{b+1}...\beta_B$, где $|\beta_{b+1}...\beta_B|=i, \; \beta_j \in \{1,2 \}$. 

Тогда
$$\prod_{j=1}^{d(y)}\left(g\left(y,j\right)-i\right)=0,$$
так как в данном случае $y$ представляется в виде $y=\beta_1...\beta_b2\beta_{b+1}...\beta_B$, где $|\beta_{b+1}...\beta_B|=i-1, \; \beta_j \in \{1,2 \}$. А кроме того
$${f\left(y,i,0\right)}=0$$
по определению функции $f$.
Значит,
$${f\left(x,i,0\right)}\prod_{j=1}^{d(y)}\left(g\left(y,j\right)-i\right)=0={f\left(y,i,0\right)}\prod_{j=1}^{d(x)}\left(g\left(x,j\right)-i\right).$$

\item Пусть $i\ne 0$, и $x$ не представляется в виде $x=\alpha_1...\alpha_a\alpha_{a+1}...\alpha_A$, где $|\alpha_{a+1}...\alpha_A|=i, \; \alpha_j \in \{1,2 \}$, а $y$ представляется в виде $y=\beta_1...\beta_b\beta_{b+1}...\beta_B$, где $|\beta_{b+1}...\beta_B|=i, \; \beta_j \in \{1,2 \}$.

Этот случай полностью аналогичен предыдущему случаю, так как мы можем поменять $x$ и $y$ местами.

\item Пусть $i \ne 0$, и $x$ не представляется в виде $x=\alpha_1...\alpha_a\alpha_{a+1}...\alpha_A$, где $|\alpha_{a+1}...\alpha_A|=i,\; \alpha_j \in \{1,2 \}$, а $y$ тоже не представляется в виде $y=\beta_1...\beta_b\beta_{b+1}...\beta_B$, где $|\beta_{b+1}...\beta_B|=i, \; \beta_j \in \{1,2 \}$. 

В данном случае
$${f\left(x,i,0\right)}=0={f\left(y,i,0\right)}$$
по определению функции $f$.
Значит,
$${f\left(x,i,0\right)}\prod_{j=1}^{d(y)}\left(g\left(y,j\right)-i\right)=0={f\left(y,i,0\right)}\prod_{j=1}^{d(x)}\left(g\left(x,j\right)-i\right).$$
\end{enumerate}

Все случаи разобраны.

Лемма доказана.
\end{proof}

\begin{Lemma} \label{xyz}
Пусть $x,y \in \mathbb{YF}:$ $|x|=|y|$, $z=h(x,y)=h(y,x)$. Тогда $\forall i \in\{1,...,|x|\}$
$$ {f\left(x,i,z\right)}\prod_{j=1}^{d(y)}\left(g\left(y,j\right)-i\right)={f\left(y,i,z\right)}\prod_{j=1}^{d(x)}\left(g\left(x,j\right)-i\right).$$ 
\end{Lemma}
\begin{proof}

Докажем Лемму по индукции по $z$. 

\underline{\textbf{База}}: $z=0$ -- верно по Лемме \ref{xyz0}.

\underline{\textbf{Переход}} к $z+1\ge 1$.

Рассмотрим два случая:
\renewcommand{\labelenumi}{\roman{enumi}$^\circ$}
\renewcommand{\labelenumii}{\roman{enumii}$^\circ$}
\begin{enumerate}
    \item Пусть $\exists x',y':x=x'1,y=y'1$. 

В данном случае по рекурсивному определению функции $f$ 
$${f\left(x,i,z+1\right)}\prod_{j=1}^{d(y)}\left(g\left(y,j\right)-i\right)={f\left(x'1,i,z+1\right)}\prod_{j=1}^{d(y)}\left(g\left(y,j\right)-i\right)=$$
$$=\left({f\left(x'1,i,0\right)}+{f\left(x',i-1,z\right)}\right)\prod_{j=1}^{d(y)}\left(g\left(y,j\right)-i\right);$$
$${f\left(y,i,z+1\right)}\prod_{j=1}^{d(x)}\left(g\left(x,j\right)-i\right)={f\left(y'1,i,z+1\right)}\prod_{j=1}^{d(x)}\left(g\left(x,j\right)-i\right)=$$
$$=\left({f\left(y'1,i,0\right)}+{f\left(y',i-1,z\right)}\right)\prod_{j=1}^{d(x)}\left(g\left(x,j\right)-i\right).$$
Ясно, что $d(x)=d(x')$ и $d(y)=d(y')$, а значит по Лемме \ref{xyz0}
$${f\left(x'1,i,0\right)}\prod_{j=1}^{d(y)}\left(g\left(y,j\right)-i\right)={f\left(y'1,i,0\right)}\prod_{j=1}^{d(x)}\left(g\left(x,j\right)-i\right);$$
Кроме того, ясно, что если $h(x,y)=z+1$, то $h(x',y')=z$, а поэтому можно воспользоваться предположением индукции. Пользуемся:
$${f\left(x',i-1,z\right)}\prod_{j=1}^{d(y)}\left(g\left(y,j\right)-i\right)={f\left(y',i-1,z\right)}\prod_{j=1}^{d(x)}\left(g\left(x,j\right)-i\right).$$

Сложив два последних на данный момент равенства, получаем следующее:
$$\left({f\left(x'1,i,0\right)}+{f\left(x',i-1,z\right)}\right)\prod_{j=1}^{d(y)}\left(g\left(y,j\right)-i\right)=\left({f\left(y'1,i,0\right)}+{f\left(y',i-1,z\right)}\right)\prod_{j=1}^{d(x)}\left(g\left(x,j\right)-i\right) \Longleftrightarrow$$
$$\Longleftrightarrow {f\left(x,i,z+1\right)}\prod_{j=1}^{d(y)}\left(g\left(y,j\right)-i\right)={f\left(y,i,z+1\right)}\prod_{j=1}^{d(x)}\left(g\left(x,j\right)-i\right).$$
Что и требовалось.

    \item Пусть $\exists x',y':x=x'2,y=y'2$. 

Рассмотрим два подслучая:
\renewcommand{\labelenumiii}{\roman{enumii}.\roman{enumiii}$^\circ$}
\begin{enumerate}
    \item Пусть $i\ne 1$.

В данном случае по рекурсивному определению функции $f$
$${f\left(x,i,z+1\right)}\prod_{j=1}^{d(y)}\left(g\left(y,j\right)-i\right)={f\left(x'2,i,z+1\right)}\prod_{j=1}^{d(y)}\left(g\left(y,j\right)-i\right)=$$
$$=\frac{f\left(x'11,i,z+2\right)}{1-i}\prod_{j=1}^{d(y)}\left(g\left(y,j\right)-i\right)=\frac{f\left(x'11,i,z+2\right)}{1-i}\left(\prod_{j=1}^{d(y'11)}\left(g\left(y'11,j\right)-i\right)\right)(1-i)=$$
$$={f\left(x'11,i,z+2\right)}\prod_{j=1}^{d(y'11)}\left(g\left(y'11,j\right)-i\right);$$
$${f\left(y,i,z+1\right)}\prod_{j=1}^{d(x)}\left(g\left(x,j\right)-i\right)={f\left(y'2,i,z+1\right)}\prod_{j=1}^{d(x)}\left(g\left(x,j\right)-i\right)=$$
$$=\frac{f\left(y'11,i,z+2\right)}{1-i}\prod_{j=1}^{d(x)}\left(g\left(x,j\right)-i\right)=\frac{f\left(y'11,i,z+2\right)}{1-i}\left(\prod_{j=1}^{d(x'11)}\left(g\left(x'11,j\right)-i\right)\right)(1-i)=$$
$$={f\left(y'11,i,z+2\right)}\prod_{j=1}^{d(x'11)}\left(g\left(x'11,j\right)-i\right).$$

Таким образом,
$${f\left(x,i,z+1\right)}\prod_{j=1}^{d(y)}\left(g\left(y,j\right)-i\right)={f\left(y,i,z+1\right)}\prod_{j=1}^{d(x)}\left(g\left(x,j\right)-i\right) \Longleftrightarrow$$
$$\Longleftrightarrow {f\left(x'11,i,z+2\right)}\prod_{j=1}^{d(y'11)}\left(g\left(y'11,j\right)-i\right)={f\left(y'11,i,z+2\right)}\prod_{j=1}^{d(x'11)}\left(g\left(x'11,j\right)-i\right).$$
Кроме того ясно, что если $h(x,y)=z+1$, то $h(x'11,y'11)=z+2$, а поэтому можно воспользоваться предыдущим случаем. Воспользовавшись, понимаем, что для доказательства этого равенства достаточно доказать, что
$${f\left(x'1,i,z+1\right)}\prod_{j=1}^{d(y'1)}\left(g\left(y'1,j\right)-i\right)={f\left(y'1,i,z+1\right)}\prod_{j=1}^{d(x'1)}\left(g\left(x'1,j\right)-i\right).$$
Также ясно, что если $h(x'11,y'11)=z+2$, то $h(x'1,y'1)=z+1$, а поэтому можно снова воспользоваться предыдущим случаем. Воспользовавшись, понимаем, что для доказательства этого равенства достаточно доказать, что
$${f\left(x',i,z\right)}\prod_{j=1}^{d(y')}\left(g\left(y',j\right)-i\right)={f\left(y',i,z\right)}\prod_{j=1}^{d(x')}\left(g\left(x',j\right)-i\right).$$
И наконец, ясно что если $h(x'1,y'1)=z+1$, то $h(x',y')=z$, а поэтому можно воспользоваться предположением индукции. Воспользовавшись, понимаем, что в данном случае переход доказан.

\item Пусть $i = 1$. 

В данном случае по Утверждению \ref{y01}
$${f\left(x,1,z+1\right)}=0={f\left(y,1,z+1\right)}.$$

А значит,
$${f\left(x,1,z+1\right)}\prod_{j=1}^{d(y)}\left(g\left(y,j\right)-1\right)=0={f\left(y,1,z+1\right)}\prod_{j=1}^{d(x)}\left(g\left(x,j\right)-1\right).$$
Что и требовалось.

Разобраны все случаи, так как если $x$ и $y$ заканчиваются на разные цифры, то $z=0$, а этот случай был разобран в базе.
 
\end{enumerate}
Лемма доказана.
\end{enumerate}

\end{proof}

Итак, продолжим доказывать Теорему, разбирая случаи:

\item Пусть $|y|=|x|$ и $\delta_1=-1$.

Пусть $z=h(x,y)$.

В данном случае мы делаем $s(\Delta)$ шагов ``вверх'' и $s(\Delta)$ шагов ``вниз''. Тогда давайте смотреть на траекторию $\Delta$ ``наоборот'':

\begin{tikzpicture}

\draw[ultra thick, ->] (0.5,0) -- (0.5,7);
\draw[thin, dashed, ->] (1.5,0) -- (1.5,7);
\draw[thin, dashed, ->] (2.5,0) -- (2.5,7);
\draw[thin, dashed, ->] (3.5,0) -- (3.5,7);
\draw[thin, dashed, ->] (4.5,0) -- (4.5,7);
\draw[thin, dashed, ->] (5.5,0) -- (5.5,7);
\draw[thin, dashed, ->] (6.5,0) -- (6.5,7);
\draw[thin, dashed, ->] (7.5,0) -- (7.5,7);
\draw[thin, dashed, ->] (8.5,0) -- (8.5,7);
\draw[thin, dashed, ->] (9.5,0) -- (9.5,7);
\draw[thin, dashed, ->] (10.5,0) -- (10.5,7);
\draw[thin, dashed, ->] (11.5,0) -- (11.5,7);
\draw[thin, dashed, ->] (12.5,0) -- (12.5,7);
\draw[thin, dashed, ->] (13.5,0) -- (13.5,7);
\draw[thin, dashed, ->] (14.5,0) -- (14.5,7);

\draw[ultra thick, ->] (0,0.5) -- (15,0.5);
\draw[thin, dashed, ->] (0,1.5) -- (15,1.5);
\draw[thin, dashed, ->] (0,2.5) -- (15,2.5);
\draw[thin, dashed, ->] (0,3.5) -- (15,3.5);
\draw[thin, dashed, ->] (0,4.5) -- (15,4.5);
\draw[thin, dashed, ->] (0,5.5) -- (15,5.5);
\draw[thin, dashed, ->] (0,6.5) -- (15,6.5);

\draw[ultra thick, red, ->] (0.5,4.5) -- (1.5,3.5);
\draw[ultra thick, red, ->] (1.5,3.5) -- (2.5,2.5);
\draw[ultra thick, red, ->] (2.5,2.5) -- (3.5,3.5);
\draw[ultra thick, red, ->] (3.5,3.5) -- (4.5,2.5);
\draw[ultra thick, red, ->] (4.5,2.5) -- (5.5,1.5);
\draw[ultra thick, red, ->] (5.5,1.5) -- (6.5,0.5);
\draw[ultra thick, red, ->] (6.5,0.5) -- (7.5,1.5);
\draw[ultra thick, red, ->] (7.5,1.5) -- (8.5,2.5);
\draw[ultra thick, red, ->] (8.5,2.5) -- (9.5,3.5);
\draw[ultra thick, red, ->] (9.5,3.5) -- (10.5,4.5);
\draw[ultra thick, red, ->] (10.5,4.5) -- (11.5,5.5);
\draw[ultra thick, red, ->] (11.5,5.5) -- (12.5,6.5);
\draw[ultra thick, red, ->] (12.5,6.5) -- (13.5,5.5);
\draw[ultra thick, red, ->] (13.5,5.5) -- (14.5,4.5);

\path (0.25,0.25) node  {$0$}
-- (1.25,0.25)  node  {$1$}
-- (2.25,0.25)  node  {$2$}
-- (3.25,0.25)  node  {$3$}
-- (4.25,0.25)  node  {$4$}
-- (5.25,0.25)  node  {$5$}
-- (6.25,0.25)  node  {$6$}
-- (7.25,0.25)  node  {$7$}
-- (8.25,0.25)  node  {$8$}
-- (9.25,0.25)  node  {$9$}
-- (10.25,0.25)  node  {$10$}
-- (11.25,0.25)  node  {$11$}
-- (12.25,0.25)  node  {$12$}
-- (13.25,0.25)  node  {$13$}
-- (14.25,0.25)  node  {$14$};

\path (0.25,1.25)  node  {$1$}
-- (0.25,2.25)  node  {$2$}
-- (0.25,3.25)  node  {$3$}
-- (0.25,4.25)  node  {$4$}
-- (0.25,5.25)  node  {$5$}
-- (0.25,6.25)  node  {$6$};
\end{tikzpicture}

$$$$

\begin{tikzpicture}

\draw[thin, dashed, ->] (0.5,0) -- (0.5,7);
\draw[thin, dashed, ->] (1.5,0) -- (1.5,7);
\draw[thin, dashed, ->] (2.5,0) -- (2.5,7);
\draw[thin, dashed, ->] (3.5,0) -- (3.5,7);
\draw[thin, dashed, ->] (4.5,0) -- (4.5,7);
\draw[thin, dashed, ->] (5.5,0) -- (5.5,7);
\draw[thin, dashed, ->] (6.5,0) -- (6.5,7);
\draw[thin, dashed, ->] (7.5,0) -- (7.5,7);
\draw[thin, dashed, ->] (8.5,0) -- (8.5,7);
\draw[thin, dashed, ->] (9.5,0) -- (9.5,7);
\draw[thin, dashed, ->] (10.5,0) -- (10.5,7);
\draw[thin, dashed, ->] (11.5,0) -- (11.5,7);
\draw[thin, dashed, ->] (12.5,0) -- (12.5,7);
\draw[thin, dashed, ->] (13.5,0) -- (13.5,7);
\draw[ultra thick, ->] (14.5,0) -- (14.5,7);

\draw[ultra thick, <-] (0,0.5) -- (15,0.5);
\draw[thin, dashed, <-] (0,1.5) -- (15,1.5);
\draw[thin, dashed, <-] (0,2.5) -- (15,2.5);
\draw[thin, dashed, <-] (0,3.5) -- (15,3.5);
\draw[thin, dashed, <-] (0,4.5) -- (15,4.5);
\draw[thin, dashed, <-] (0,5.5) -- (15,5.5);
\draw[thin, dashed, <-] (0,6.5) -- (15,6.5);

\draw[ultra thick, blue, <-] (0.5,4.5) -- (1.5,3.5);
\draw[ultra thick, blue, <-] (1.5,3.5) -- (2.5,2.5);
\draw[ultra thick, blue, <-] (2.5,2.5) -- (3.5,3.5);
\draw[ultra thick, blue, <-] (3.5,3.5) -- (4.5,2.5);
\draw[ultra thick, blue, <-] (4.5,2.5) -- (5.5,1.5);
\draw[ultra thick, blue, <-] (5.5,1.5) -- (6.5,0.5);
\draw[ultra thick, blue, <-] (6.5,0.5) -- (7.5,1.5);
\draw[ultra thick, blue, <-] (7.5,1.5) -- (8.5,2.5);
\draw[ultra thick, blue, <-] (8.5,2.5) -- (9.5,3.5);
\draw[ultra thick, blue, <-] (9.5,3.5) -- (10.5,4.5);
\draw[ultra thick, blue, <-] (10.5,4.5) -- (11.5,5.5);
\draw[ultra thick, blue, <-] (11.5,5.5) -- (12.5,6.5);
\draw[ultra thick, blue, <-] (12.5,6.5) -- (13.5,5.5);
\draw[ultra thick, blue, <-] (13.5,5.5) -- (14.5,4.5);

\path (0.75,0.25) node  {$14$}
-- (1.75,0.25)  node  {$13$}
-- (2.75,0.25)  node  {$12$}
-- (3.75,0.25)  node  {$11$}
-- (4.75,0.25)  node  {$10$}
-- (5.75,0.25)  node  {$9$}
-- (6.75,0.25)  node  {$8$}
-- (7.75,0.25)  node  {$7$}
-- (8.75,0.25)  node  {$6$}
-- (9.75,0.25)  node  {$5$}
-- (10.75,0.25)  node  {$4$}
-- (11.75,0.25)  node  {$3$}
-- (12.75,0.25)  node  {$2$}
-- (13.75,0.25)  node  {$1$}
-- (14.75,0.25)  node  {$0$};

\path (14.75,1.25)  node  {$1$}
-- (14.75,2.25)  node  {$2$}
-- (14.75,3.25)  node  {$3$}
-- (14.75,4.25)  node  {$4$}
-- (14.75,5.25)  node  {$5$}
-- (14.75,6.25)  node  {$6$};
\end{tikzpicture}

На рисунках изображено, как по траектории $\Delta$ (верхний рисунок) строится ``обратная'' траектория $\overline{\Delta}$ (нижний рисунок). Теперь давайте введём формальное определение ``обратной'' траектории к траектории $\Delta$:

\begin{Def}

Пусть $X\in \mathbb{N}_0$ и $\Delta\in \Omega(X,X)$. Тогда ``обратной'' к $\Delta$ траекторией назовём траекторию $\overline{\Delta}$, определённую следующим образом:
$$|\overline{\Delta}|=|\Delta| \text{ и } \forall i \in \{0,1,...,|\Delta|\} \; \overline{\Delta}_i=\Delta_{|\Delta|-i}.$$

\end{Def}
Сразу из определения следует:
\begin{Zam} 
Пусть $X\in \mathbb{N}_0$ и $\Delta\in \Omega(X,X)$. Тогда:
\begin{itemize}
    \item $ \overline{\Delta} \in \Omega(X,X)$;
    \item $s(\overline{\Delta})=s(\Delta)$;
    \item $\overline{\Delta}$ как и $\Delta$ состоит из $s(\Delta)$ шагов ``вверх'' и такого же числа шагов ``вниз'';
    \item $$\forall i \in \{1,...,|\Delta|\} \;\overline{\delta}_i=\overline{\Delta}_{i}-\overline{\Delta}_{i-1}=\Delta_{|\Delta|-i}-\Delta_{|\Delta|-i+1}=-\delta_{|\Delta|-i+1}.$$
\end{itemize}
\end{Zam}

Кроме того, докажем следующее утверждение:

\begin{Prop} \label{obrat}
Пусть $X\in \mathbb{N}_0$ и $\Delta\in \Omega(X,X)$. Тогда
$$E(\Delta)=E\left(\overline{\Delta}\right).$$
\end{Prop}

\begin{proof}
Давайте докажем Утверждение для всех $\Delta\in\Omega(X,X)$ по индукции по $s(\Delta)$:

\underline{\textbf{База}}: $s(\Delta)=0$ -- очевидно, так как в данном случае $E(\Delta)=E(\overline{\Delta})= \o$.

\underline{\textbf{Переход}} к $\Delta:$ $s(\Delta)\ge 1$.

Пусть траектория ${\Delta}\in\Omega(X,X)$ не содержит двух соседних шагов, среди которых один шаг -- это шаг ``вверх'', а другой шаг -- это шаг ``вниз''. Ясно, что тогда либо все шаги траектории $\Delta$ -- это шаги ``вниз'', либо все шаги траектории $\Delta$ -- это шаги ``вверх''. Но $s(\Delta)\ge 1$, а значит, $|\Delta|\ge 1$, а поэтому $\Delta \notin \Omega(X,X)$, и мы приходим к противоречию.  

Значит, каждая из траекторий $\Delta$ и $\overline{\Delta}$ содержит пару соседних шагов, среди которых один шаг -- это шаг ``вверх'', а другой шаг -- это шаг ``вниз''.

Тогда давайте введём $\Delta'$ и ${\Delta''}$ следующим образом:

\begin{tikzpicture}

\draw[ultra thick, ->] (0.5,0) -- (0.5,7);
\draw[ultra thick, red, ->] (1.5,0) -- (1.5,7);
\draw[thin, dashed, ->] (2.5,0) -- (2.5,7);
\draw[ultra thick, red, ->] (3.5,0) -- (3.5,7);
\draw[thin, dashed, ->] (4.5,0) -- (4.5,7);
\draw[thin, dashed, ->] (5.5,0) -- (5.5,7);
\draw[thin, dashed, ->] (6.5,0) -- (6.5,7);
\draw[thin, dashed, ->] (7.5,0) -- (7.5,7);
\draw[thin, dashed, ->] (8.5,0) -- (8.5,7);
\draw[thin, dashed, ->] (9.5,0) -- (9.5,7);
\draw[thin, dashed, ->] (10.5,0) -- (10.5,7);
\draw[thin, dashed, ->] (11.5,0) -- (11.5,7);
\draw[thin, dashed, ->] (12.5,0) -- (12.5,7);
\draw[thin, dashed, ->] (13.5,0) -- (13.5,7);
\draw[thin, dashed, ->] (14.5,0) -- (14.5,7); 

\draw[ultra thick, ->] (0,0.5) -- (15,0.5);
\draw[thin, dashed, ->] (0,1.5) -- (15,1.5);
\draw[thin, dashed, ->] (0,2.5) -- (15,2.5);
\draw[thin, dashed, ->] (0,3.5) -- (15,3.5);
\draw[thin, dashed, ->] (0,4.5) -- (15,4.5);
\draw[thin, dashed, ->] (0,5.5) -- (15,5.5);
\draw[thin, dashed, ->] (0,6.5) -- (15,6.5);

\draw[ultra thick, red, ->] (0.5,4.5) -- (1.5,3.5);
\draw[ultra thick, red, ->] (1.5,3.5) -- (2.5,2.5);
\draw[ultra thick, red, ->] (2.5,2.5) -- (3.5,3.5);
\draw[ultra thick, red, ->] (3.5,3.5) -- (4.5,2.5);
\draw[ultra thick, red, ->] (4.5,2.5) -- (5.5,1.5);
\draw[ultra thick, red, ->] (5.5,1.5) -- (6.5,0.5);
\draw[ultra thick, red, ->] (6.5,0.5) -- (7.5,1.5);
\draw[ultra thick, red, ->] (7.5,1.5) -- (8.5,2.5);
\draw[ultra thick, red, ->] (8.5,2.5) -- (9.5,3.5);
\draw[ultra thick, red, ->] (9.5,3.5) -- (10.5,4.5);
\draw[ultra thick, red, ->] (10.5,4.5) -- (11.5,5.5);
\draw[ultra thick, red, ->] (11.5,5.5) -- (12.5,6.5);
\draw[ultra thick, red, ->] (12.5,6.5) -- (13.5,5.5);
\draw[ultra thick, red, ->] (13.5,5.5) -- (14.5,4.5);

\path (0.25,0.25) node  {$0$}
-- (1.25,0.25)  node  {$1$}
-- (2.25,0.25)  node  {$2$}
-- (3.25,0.25)  node  {$3$}
-- (4.25,0.25)  node  {$4$}
-- (5.25,0.25)  node  {$5$}
-- (6.25,0.25)  node  {$6$}
-- (7.25,0.25)  node  {$7$}
-- (8.25,0.25)  node  {$8$}
-- (9.25,0.25)  node  {$9$}
-- (10.25,0.25)  node  {$10$}
-- (11.25,0.25)  node  {$11$}
-- (12.25,0.25)  node  {$12$}
-- (13.25,0.25)  node  {$13$}
-- (14.25,0.25)  node  {$14$};

\path (0.25,1.25)  node  {$1$}
-- (0.25,2.25)  node  {$2$}
-- (0.25,3.25)  node  {$3$}
-- (0.25,4.25)  node  {$4$}
-- (0.25,5.25)  node  {$5$}
-- (0.25,6.25)  node  {$6$};
\end{tikzpicture}

$$$$

\begin{tikzpicture}

\draw[thin, dashed, ->] (0.5,0) -- (0.5,7);
\draw[ultra thick, blue,  ->] (1.5,0) -- (1.5,7);
\draw[thin, dashed, ->] (2.5,0) -- (2.5,7);
\draw[ultra thick, blue, ->] (3.5,0) -- (3.5,7);
\draw[thin, dashed, ->] (4.5,0) -- (4.5,7);
\draw[thin, dashed, ->] (5.5,0) -- (5.5,7);
\draw[thin, dashed, ->] (6.5,0) -- (6.5,7);
\draw[thin, dashed, ->] (7.5,0) -- (7.5,7);
\draw[thin, dashed, ->] (8.5,0) -- (8.5,7);
\draw[thin, dashed, ->] (9.5,0) -- (9.5,7);
\draw[thin, dashed, ->] (10.5,0) -- (10.5,7);
\draw[thin, dashed, ->] (11.5,0) -- (11.5,7);
\draw[thin, dashed, ->] (12.5,0) -- (12.5,7);
\draw[thin, dashed, ->] (13.5,0) -- (13.5,7);
\draw[ultra thick, ->] (14.5,0) -- (14.5,7);

\draw[ultra thick, <-] (0,0.5) -- (15,0.5);
\draw[thin, dashed, <-] (0,1.5) -- (15,1.5);
\draw[thin, dashed, <-] (0,2.5) -- (15,2.5);
\draw[thin, dashed, <-] (0,3.5) -- (15,3.5);
\draw[thin, dashed, <-] (0,4.5) -- (15,4.5);
\draw[thin, dashed, <-] (0,5.5) -- (15,5.5);
\draw[thin, dashed, <-] (0,6.5) -- (15,6.5);

\draw[ultra thick, blue, <-] (0.5,4.5) -- (1.5,3.5);
\draw[ultra thick, blue, <-] (1.5,3.5) -- (2.5,2.5);
\draw[ultra thick, blue, <-] (2.5,2.5) -- (3.5,3.5);
\draw[ultra thick, blue, <-] (3.5,3.5) -- (4.5,2.5);
\draw[ultra thick, blue, <-] (4.5,2.5) -- (5.5,1.5);
\draw[ultra thick, blue, <-] (5.5,1.5) -- (6.5,0.5);
\draw[ultra thick, blue, <-] (6.5,0.5) -- (7.5,1.5);
\draw[ultra thick, blue, <-] (7.5,1.5) -- (8.5,2.5);
\draw[ultra thick, blue, <-] (8.5,2.5) -- (9.5,3.5);
\draw[ultra thick, blue, <-] (9.5,3.5) -- (10.5,4.5);
\draw[ultra thick, blue, <-] (10.5,4.5) -- (11.5,5.5);
\draw[ultra thick, blue, <-] (11.5,5.5) -- (12.5,6.5);
\draw[ultra thick, blue, <-] (12.5,6.5) -- (13.5,5.5);
\draw[ultra thick, blue, <-] (13.5,5.5) -- (14.5,4.5);

\path (0.75,0.25) node  {$14$}
-- (1.75,0.25)  node  {$13$}
-- (2.75,0.25)  node  {$12$}
-- (3.75,0.25)  node  {$11$}
-- (4.75,0.25)  node  {$10$}
-- (5.75,0.25)  node  {$9$}
-- (6.75,0.25)  node  {$8$}
-- (7.75,0.25)  node  {$7$}
-- (8.75,0.25)  node  {$6$}
-- (9.75,0.25)  node  {$5$}
-- (10.75,0.25)  node  {$4$}
-- (11.75,0.25)  node  {$3$}
-- (12.75,0.25)  node  {$2$}
-- (13.75,0.25)  node  {$1$}
-- (14.75,0.25)  node  {$0$};

\path (14.75,1.25)  node  {$1$}
-- (14.75,2.25)  node  {$2$}
-- (14.75,3.25)  node  {$3$}
-- (14.75,4.25)  node  {$4$}
-- (14.75,5.25)  node  {$5$}
-- (14.75,6.25)  node  {$6$};
\end{tikzpicture}

$$$$

\begin{tikzpicture}

\draw[ultra thick, ->] (0.5,0) -- (0.5,7);
\draw[thin, dashed, ->] (1.5,0) -- (1.5,7);
\draw[thin, dashed, ->] (2.5,0) -- (2.5,7);
\draw[thin, dashed, ->] (3.5,0) -- (3.5,7);
\draw[thin, dashed, ->] (4.5,0) -- (4.5,7);
\draw[thin, dashed, ->] (5.5,0) -- (5.5,7);
\draw[thin, dashed, ->] (6.5,0) -- (6.5,7);
\draw[thin, dashed, ->] (7.5,0) -- (7.5,7);
\draw[thin, dashed, ->] (8.5,0) -- (8.5,7);
\draw[thin, dashed, ->] (9.5,0) -- (9.5,7);
\draw[thin, dashed, ->] (10.5,0) -- (10.5,7);
\draw[thin, dashed, ->] (11.5,0) -- (11.5,7);
\draw[thin, dashed, ->] (12.5,0) -- (12.5,7);
\draw[thin, dashed, ->] (13.5,0) -- (13.5,7);
\draw[thin, dashed, ->] (14.5,0) -- (14.5,7); 

\draw[ultra thick, ->] (0,0.5) -- (15,0.5);
\draw[thin, dashed, ->] (0,1.5) -- (15,1.5);
\draw[thin, dashed, ->] (0,2.5) -- (15,2.5);
\draw[thin, dashed, ->] (0,3.5) -- (15,3.5);
\draw[thin, dashed, ->] (0,4.5) -- (15,4.5);
\draw[thin, dashed, ->] (0,5.5) -- (15,5.5);
\draw[thin, dashed, ->] (0,6.5) -- (15,6.5);

\draw[ultra thick, red, ->] (0.5,4.5) -- (1.5,3.5);
\draw[ultra thick, red, ->] (1.5,3.5) -- (2.5,2.5);
\draw[ultra thick, red, ->] (2.5,2.5) -- (3.5,1.5);
\draw[ultra thick, red, ->] (3.5,1.5) -- (4.5,0.5);
\draw[ultra thick, red, ->] (4.5,0.5) -- (5.5,1.5);
\draw[ultra thick, red, ->] (5.5,1.5) -- (6.5,2.5);
\draw[ultra thick, red, ->] (6.5,2.5) -- (7.5,3.5);
\draw[ultra thick, red, ->] (7.5,3.5) -- (8.5,4.5);
\draw[ultra thick, red, ->] (8.5,4.5) -- (9.5,5.5);
\draw[ultra thick, red, ->] (9.5,5.5) -- (10.5,6.5);
\draw[ultra thick, red, ->] (10.5,6.5) -- (11.5,5.5);
\draw[ultra thick, red, ->] (11.5,5.5) -- (12.5,4.5);

\path (0.25,0.25) node  {$0$}
-- (1.25,0.25)  node  {$1$}
-- (2.25,0.25)  node  {$2$}
-- (3.25,0.25)  node  {$3$}
-- (4.25,0.25)  node  {$4$}
-- (5.25,0.25)  node  {$5$}
-- (6.25,0.25)  node  {$6$}
-- (7.25,0.25)  node  {$7$}
-- (8.25,0.25)  node  {$8$}
-- (9.25,0.25)  node  {$9$}
-- (10.25,0.25)  node  {$10$}
-- (11.25,0.25)  node  {$11$}
-- (12.25,0.25)  node  {$12$}
-- (13.25,0.25)  node  {$13$}
-- (14.25,0.25)  node  {$14$};

\path (0.25,1.25)  node  {$1$}
-- (0.25,2.25)  node  {$2$}
-- (0.25,3.25)  node  {$3$}
-- (0.25,4.25)  node  {$4$}
-- (0.25,5.25)  node  {$5$}
-- (0.25,6.25)  node  {$6$};
\end{tikzpicture}

$$$$

\begin{tikzpicture}

\draw[thin, dashed, ->] (0.5,0) -- (0.5,7);
\draw[thin, dashed, ->] (1.5,0) -- (1.5,7);
\draw[thin, dashed, ->] (2.5,0) -- (2.5,7);
\draw[thin, dashed, ->] (3.5,0) -- (3.5,7);
\draw[thin, dashed, ->] (4.5,0) -- (4.5,7);
\draw[thin, dashed, ->] (5.5,0) -- (5.5,7);
\draw[thin, dashed, ->] (6.5,0) -- (6.5,7);
\draw[thin, dashed, ->] (7.5,0) -- (7.5,7);
\draw[thin, dashed, ->] (8.5,0) -- (8.5,7);
\draw[thin, dashed, ->] (9.5,0) -- (9.5,7);
\draw[thin, dashed, ->] (10.5,0) -- (10.5,7);
\draw[thin, dashed, ->] (11.5,0) -- (11.5,7);
\draw[thin, dashed, ->] (12.5,0) -- (12.5,7);
\draw[thin, dashed, ->] (13.5,0) -- (13.5,7);
\draw[ultra thick, ->] (14.5,0) -- (14.5,7);

\draw[ultra thick, <-] (0,0.5) -- (15,0.5);
\draw[thin, dashed, <-] (0,1.5) -- (15,1.5);
\draw[thin, dashed, <-] (0,2.5) -- (15,2.5);
\draw[thin, dashed, <-] (0,3.5) -- (15,3.5);
\draw[thin, dashed, <-] (0,4.5) -- (15,4.5);
\draw[thin, dashed, <-] (0,5.5) -- (15,5.5);
\draw[thin, dashed, <-] (0,6.5) -- (15,6.5);

\draw[ultra thick, blue, <-] (2.5,4.5) -- (3.5,3.5);
\draw[ultra thick, blue, <-] (3.5,3.5) -- (4.5,2.5);
\draw[ultra thick, blue, <-] (4.5,2.5) -- (5.5,1.5);
\draw[ultra thick, blue, <-] (5.5,1.5) -- (6.5,0.5);
\draw[ultra thick, blue, <-] (6.5,0.5) -- (7.5,1.5);
\draw[ultra thick, blue, <-] (7.5,1.5) -- (8.5,2.5);
\draw[ultra thick, blue, <-] (8.5,2.5) -- (9.5,3.5);
\draw[ultra thick, blue, <-] (9.5,3.5) -- (10.5,4.5);
\draw[ultra thick, blue, <-] (10.5,4.5) -- (11.5,5.5);
\draw[ultra thick, blue, <-] (11.5,5.5) -- (12.5,6.5);
\draw[ultra thick, blue, <-] (12.5,6.5) -- (13.5,5.5);
\draw[ultra thick, blue, <-] (13.5,5.5) -- (14.5,4.5);

\path (0.75,0.25) node  {$14$}
-- (1.75,0.25)  node  {$13$}
-- (2.75,0.25)  node  {$12$}
-- (3.75,0.25)  node  {$11$}
-- (4.75,0.25)  node  {$10$}
-- (5.75,0.25)  node  {$9$}
-- (6.75,0.25)  node  {$8$}
-- (7.75,0.25)  node  {$7$}
-- (8.75,0.25)  node  {$6$}
-- (9.75,0.25)  node  {$5$}
-- (10.75,0.25)  node  {$4$}
-- (11.75,0.25)  node  {$3$}
-- (12.75,0.25)  node  {$2$}
-- (13.75,0.25)  node  {$1$}
-- (14.75,0.25)  node  {$0$};

\path (14.75,1.25)  node  {$1$}
-- (14.75,2.25)  node  {$2$}
-- (14.75,3.25)  node  {$3$}
-- (14.75,4.25)  node  {$4$}
-- (14.75,5.25)  node  {$5$}
-- (14.75,6.25)  node  {$6$};
\end{tikzpicture}

На рисунках изображено, как по траекториям $\Delta$ и $\overline{\Delta}$ (верхние два рисунка) строятся траектории $\Delta'$ и $\Delta''$ (нижние два рисунка). В данном случае мы берём два соседних шага траектории, среди которых один шаг -- это шаг ``вверх'', а другой шаг -- это шаг ``вниз'', и просто убираем их. Формально:

Мы уже убедились в том, что существует $r\in\{1,...,|\Delta|-2\}$: $\delta_{r+1}\ne \delta_{r+2}$. Ясно, если $r$ удовлетворяет данному условию, то $\overline{\delta}_{|\Delta|-r-1}\ne\overline{\delta}_{|\Delta|-r}$, причём $\delta_{r+1}=\overline{\delta}_{|\Delta|-r}$ и $\delta_{r+2}=\overline{\delta}_{|\Delta|-r-1}$. Введём $\Delta'$ следующим образом:
$$|\Delta'|=|\Delta|-2; \text{ при } i\in \{0,1,...,r\} \; \Delta'_i=\Delta_i; \text{ при } i\in \{r+1,...,|\Delta'|\} \; \Delta'_i=\Delta_{i+2},$$
а $\Delta''$ -- следующим:
$$|\Delta''|=\left|\overline{\Delta}\right|-2; \text{ при } i \in \{0,1,...,|\Delta''|-r-1\} \; \Delta''_i=\overline{\Delta}_i; \text{ при } i \in \{|\Delta''|-r,...,|\Delta''|\} \; \Delta''_i=\overline{\Delta}_{i+2}.$$

Несложно заметить, что 
\begin{itemize}
    \item $ \Delta',\Delta''\in \Omega(X,X)$;
    \item $ |\Delta'|=|\Delta''|$;
    \item $s(\Delta')=s(\Delta'')=s(\Delta)-1=s\left(\overline{\Delta}\right)-1$.
\end{itemize}

Теперь мы готовы доказать, что $\overline{\Delta'}=\Delta''$. Давайте докажем:

\begin{itemize}
    \item При $i \in \{0,1,...,|\Delta''|-r-1\}$ $$\overline{\Delta'_i}=\Delta'_{|\Delta'|-i}=\Delta_{|\Delta'|-i+2}=\Delta_{|\Delta|-i}=\overline{\Delta_{i}}=\Delta''_i;$$
    \item При $i \in \{|\Delta''|-r,...,|\Delta''|\}$
    $$\overline{\Delta'_i}=\Delta'_{|\Delta'|-i}=\Delta_{|\Delta'|-i}=\Delta_{|\Delta|-2-i}=\overline{\Delta}_{i+2}=\Delta''_i.$$
\end{itemize}

Доказали.

Заметим, что 
\begin{itemize}
    \item Если $\delta_{r+1}=1$ и $\delta_{r+2}=-1$, то $E(\Delta)=E(\Delta')\cup\Delta_{r+1}$, а $E(\overline{\Delta})=E(\Delta'')\cup\overline{\Delta}_{|\Delta|-r-1}=E(\Delta'')\cup\Delta_{r+1}$. Мы уже доказали, что ${\overline{\Delta'}}={\Delta''} \Longrightarrow$ (по предположению, которое мы можем применить, так как $s(\Delta')=s(\Delta'')<s(\Delta)=s\left(\overline{\Delta}\right)$) $\Longrightarrow E(\Delta')=E(\Delta'') \Longleftrightarrow E(\Delta)=E\left(\overline{\Delta}\right) $.
    \item Если же, наоборот, $\delta_{r+1}=-1$ и $\delta_{r+2}=1$, то $E(\Delta)=E(\Delta')\cup\Delta_{r+2}$, а $E(\overline{\Delta})=E(\Delta'')\cup\overline{\Delta}_{|\Delta|-r}=E(\Delta'')\cup\Delta_{r}=$(очевидно)=$E(\Delta'')\cup\Delta_{r+2}$. Мы уже доказали, что ${\overline{\Delta'}}={\Delta''} \Longrightarrow$ (по предположению, которое мы можем применить, так как $s(\Delta')=s(\Delta'')<s(\Delta)=s\left(\overline{\Delta}\right)$) $\Longrightarrow E(\Delta')=E(\Delta'') \Longleftrightarrow E(\Delta)=E\left(\overline{\Delta}\right) $.
\end{itemize}

Переход доказан.
\end{proof}

Вернёмся к разбору третьего случая.

Вспомним, что $|x|=|y|$, а поэтому по Утверждению \ref{obrat} $\forall i\in\{1,...,|x|\}$ $$\prod_{j=1}^{s(\Delta)}\left(e\left(\Delta,j\right)-i\right)=\prod_{j=1}^{s\left(\overline{\Delta}\right)}\left(e\left(\overline{\Delta},j\right)-i\right).$$

Значит,
$$o(x,y,\Delta)=\sum_{i=0}^{|x|}\left( {f\left(x,i,z\right)}\prod_{j=1}^{d(y)}\left(g\left(y,j\right)-i\right)\prod_{j=1}^{s(\Delta)}\left(e\left(\Delta,j\right)-i\right)\right)=$$
$$=\text{(по Лемме \ref{xyz})}=$$
$$=\sum_{i=0}^{|y|}\left( {f\left(y,i,z\right)}\prod_{j=1}^{d(x)}\left(g\left(x,j\right)-i\right)\prod_{j=1}^{s\left(\overline{\Delta}\right)}\left(e\left(\overline{\Delta},j\right)-i\right)\right)=o\left(y,x,\overline{\Delta}\right).$$

Несложно понять, что $d(x,y,\Delta)=d\left(y,x,\overline{\Delta}\right)$. Поэтому достаточно доказать, что $d\left(y,x,\overline{\Delta}\right)=o\left(y,x,\overline{\Delta}\right).$

Вспомним, что в данном случае $\delta_{1}=-1$. Это значит, что $\overline\delta_{\left|\overline{\Delta}\right|}=1$.

\renewcommand{\labelenumi}{\theenumi)}
\renewcommand{\labelenumii}{\roman{enumii}$^\circ$}
\renewcommand{\labelenumiii}{\roman{enumii}.\roman{enumiii}$^\circ$}

\begin{Lemma} \label{nol}
Пусть $x\in\mathbb{YF}, \; y \in \{0,...,|x|\}, \; z \in \{0,...,\#x\},\; \alpha_0 \in \{1,2\}$. Тогда
$$f(x,y,z)=f(\alpha_0x,y,z)(|\alpha_0x|-y)$$
\end{Lemma}
\begin{proof}
Будем доказывать Лемму по индукции по $z$.

\underline{\textbf{База}}: $z=0$ -- мгновенно следует из Утверждения \ref{z0}.

\underline{\textbf{Переход}} к $z+1\ge 1$.

Разберём три случая:
\begin{enumerate}
    \item Пусть $y=0$.
    
    В данном случае по Утверждению \ref{y01}  $$f(x,0,z+1)=f(\alpha_0x,0,z+1)|\alpha_0x|\Longleftrightarrow f(x,0,0)=f(\alpha_0x,0,0)|\alpha_0x|.$$
    А это уже было доказано в Базе.

    \item Пусть $y>0$ и $\exists x': x=x'1$.

Воспользуемся рекурсивной формулой для $f$:
$$f(x'1,y,z+1)=f(\alpha_0x'1,y,z+1)(|\alpha_0x'1|-y) \Longleftrightarrow$$
$$\Longleftrightarrow f(x'1,y,0)+f(x',y-1,z)=(f(\alpha_0x'1,y,0)+f(\alpha_0x',y-1,z))(|\alpha_0x'1|-y).$$

По Утверждению \ref{z0}
$$f(x'1,y,0)=f(\alpha_0x'1,y,0)(|\alpha_0x'1|-y).$$

По предположению индукции
$$f(x',y-1,z)=f(\alpha_0x',y-1,z)(|\alpha_0x'|-(y-1)) \Longleftrightarrow f(x',y-1,z)=f(\alpha_0x',y-1,z)(|\alpha_0x'1|-y).$$

Сложив эти два равенства, получаем требуемое.

\item Пусть $y>0$ и $\exists x': x=x'2$.

Рассмотрим два подслучая:
\begin{enumerate}
    \item Пусть $y \ge 2$.

Воспользуемся рекурсивной формулой для $f$:
$$f(x'2,y,z+1)=f(\alpha_0x'2,y,z+1)(|\alpha_0x'2|-y) \Longleftrightarrow$$
$$\Longleftrightarrow \frac{f(x'11,y,z+2)}{1-y}=\frac{f(\alpha_0x'11,y,z+2)}{1-y}(|\alpha_0x'2|-y) \Longleftrightarrow $$
$$\Longleftrightarrow {f(x'11,y,z+2)}={f(\alpha_0x'11,y,z+2)}(|\alpha_0x'11|-y) \Longleftarrow $$
$$\Longleftarrow \text{(аналогично второму случаю)} \Longleftarrow$$
$$\Longleftarrow {f(x'1,y-1,z+1)}={f(\alpha_0x'1,y-1,z+1)}(|\alpha_0x'1|-(y-1)) \Longleftarrow$$
$$\Longleftarrow \text{(аналогично второму случаю)} \Longleftarrow$$
$$\Longleftarrow {f(x',y-2,z)}={f(\alpha_0x',y-2,z)}(|\alpha_0x'|-(y-2)).$$
А это верно по предположению.

\item Пусть $y = 1$.

В данном случае по Утверждению \ref{y01} 
$$f(x'2,1,z+1)=0=f(\alpha_0x'2,1,z+1) \Longrightarrow f(x'2,1,z+1)=0=f(\alpha_0x'2,1,z+1)(|\alpha_0x'2|-1).$$
\end{enumerate}
\end{enumerate}
Все случаи разобраны.

Лемма доказана.
\end{proof}
\renewcommand{\labelenumii}{\arabic{enumii}$)$}
\renewcommand{\labelenumiii}{\roman{enumiii}$^\circ$}
\renewcommand{\labelenumiv}{\roman{enumiii}.\roman{enumiv}$^\circ$}

\begin{Prop} \label{uroven}
$\;$
\begin{enumerate}
    \item Пусть $x\in\mathbb{YF}$, $y\in\{0,1,...,|2x|\}$, $z=\#x$. Тогда
    $$f(2x,y,z)=f(2x,y,z+1);$$
    \item Пусть $x\in\mathbb{YF}$, $y\in\{0,1,...,|x|\}$, $z=\#x$. Тогда
    $$f(1x,y,z)=f(1x,y,z+1);$$
    \item Пусть $x\in\mathbb{YF}$, $z\in\{0,1...,\#(2x)\}$. Тогда
    $$f(2x,|x|+1,z)=0.$$
    
\end{enumerate}
\end{Prop}
\begin{proof}
$\;$

\begin{enumerate}
    \item Докажем Утверждение по индукции по $|x|$.

\underline{\textbf{База}}: $|x|=0\Longleftrightarrow x=\varepsilon$.

Несложно убедиться в том, что $f(2,y,z)$ выглядят следующим образом:
\begin{center}
\begin{tabular}{ | m{0.5cm} || m{0.5cm} | m{0.5cm} | m{0.5cm} | } 
  \hline
 & $y=0$ & $y=1$ & $y=2$ \\
  \hline \hline
$z=0$ & $\frac{1}{2}$ & $0$ & $-\frac{1}{2}$\\
  \hline
$z=1$ & $\frac{1}{2}$ & $0$ &$-\frac{1}{2}$ \\ 
  \hline
\end{tabular}
\end{center}
То есть $f(2,0,0)=f(2,0,1)=\frac{1}{2}$, $f(2,1,0)=f(2,1,1)=0$, $f(2,2,0)=f(2,2,1)=-\frac{1}{2}$.

\underline{\textbf{Переход}} к $x:$ $|x|\ge1$.

Разберём три случая:
\begin{enumerate}
    \item Пусть $y=0$.
    
    В данном случае по Утверждению \ref{y01}  $$f(2x,0,z)=f(2x,0,z+1)\Longleftrightarrow f(2x,0,0)=f(2x,0,0).$$
    Что и требовалось.
    
    \item Пусть $y>0$ и $\exists x': x=x'1$.

Воспользуемся рекурсивной формулой для $f$:
$$f(2x'1,y,z)=f(2x'1,y,z+1)\Longleftrightarrow$$
$$\Longleftrightarrow f(2x'1,y,0)+f(2x',y-1,z-1)=f(2x'1,y,0)+f(2x',y-1,z)$$
$$\Longleftrightarrow  f(2x',y-1,z-1)=f(2x',y-1,z).$$
Ясно, что $\#(2x')=\#(2x'1)-1$, поэтому мы можем воспользоваться предположением индукции.

\item Пусть $y>0$ и $\exists x': x=x'2$.

Рассмотрим два подслучая:
\begin{enumerate}
    \item Пусть $y \ge 2$.

Воспользуемся рекурсивной формулой для $f$:
$$f(2x'2,y,z)=f(2x'2,y,z+1)\Longleftrightarrow$$
$$\Longleftrightarrow \frac{f(2x'11,y,z+1)}{1-y}=\frac{f(2x'11,y,z+2)}{1-y}\Longleftrightarrow $$
$$\Longleftrightarrow {f(2x'11,y,z+1)}={2x'11,y,z+2)}.$$

Ясно, что $\#(2x'11)=\#(2x'1)+1$, поэтому мы можем аналогично второму случаю свести это равенство к следующему равенству:
$${f(2x'1,y-1,z)}=f{(2x'1,y-1,z+1)}.$$

Ясно, что $\#(2x'1)=\#(2x'2)$, и при этом $|2x'1|<|2x'2|$, поэтому мы можем воспользоваться предположением индукции.

\item Пусть $y = 1$.

В данном случае по Утверждению \ref{y01} 
$$f(2x'2,1,z)=0=f(2x'2,1,z+1).$$
Что и требовалось.
\end{enumerate}
\end{enumerate}
Все случаи разобраны.

Утверждение доказано.
    \item 
Докажем Утверждение по индукции по $|x|$.

\underline{\textbf{База}}: $|x|=0\Longleftrightarrow x=\varepsilon$.

Несложно убедиться в том, что $f(1,y,z)$ выглядят следующим образом:

\begin{center}
\begin{tabular}{ | m{0.5cm} || m{0.5cm} | m{0.5cm} | } 
  \hline
 & $y=0$ & $y=1$ \\
  \hline \hline
$z=0$ & $1$ & $-1$ \\
  \hline
$z=1$ & $1$ & $0$ \\ 
  \hline
\end{tabular}
\end{center}

То есть $f(1,0,0)=f(1,0,1)=1$.

\underline{\textbf{Переход}} к $x:$ $|x|\ge1$.

Разберём три случая:
\begin{enumerate}
    \item Пусть $y=0$.
    
    В данном случае по Утверждению \ref{y01}  $$f(1x,0,z)=f(1x,0,z+1)\Longleftrightarrow f(1x,0,0)=f(1x,0,0).$$
    Что и требовалось.
    
    \item Пусть $y>0$ и $\exists x': x=x'1$.

В данном случае воспользуемся рекурсивной формулой для $f$:
$$f(1x'1,y,z)=f(1x'1,y,z+1)\Longleftrightarrow$$
$$\Longleftrightarrow f(1x'1,y,0)+f(1x',y-1,z-1)=f(1x'1,y,0)+f(1x',y-1,z)$$
$$\Longleftrightarrow  f(1x',y-1,z-1)=f(1x',y-1,z).$$
Ясно, что $\#(2x')=\#(2x'1)-1$ и то, что если $y\in\{1,...,|x|\}$, то $(y-1)\in\{1,...,|x'|\}$, поэтому мы можем воспользоваться предположением индукции.

\item Пусть $y>0$ и $\exists x': x=x'2$.

Рассмотрим два подслучая:
\begin{enumerate}
    \item Пусть $y \ge 2$.

В данном случае воспользуемся рекурсивной формулой для $f$:
$$f(1x'2,y,z)=f(1x'2,y,z+1)\Longleftrightarrow$$
$$\Longleftrightarrow \frac{f(1x'11,y,z+1)}{1-y}=\frac{f(1x'11,y,z+2)}{1-y}\Longleftrightarrow $$
$$\Longleftrightarrow {f(1x'11,y,z+1)}={1x'11,y,z+2)}.$$

Ясно, что $\#(1x'11)=\#(1x'1)+1$ и то, что если $y\in\{2,...,|x'2|\}$, то $y\in\{2,...,|x'11|\}$, поэтому мы можем аналогично второму случаю свести это равенство к следующему равенству:
$${f(1x'1,y-1,z)}=f{(1x'1,y-1,z+1)}.$$

Ясно, что $\#(2x'1)=\#(2x'2)$, и при этом $|2x'1|<|2x'2|$, а также то, что если $y\in\{2,...,|x'11|\}$, то $(y-1)\in\{1,...,|x'1|\}$, поэтому мы можем воспользоваться предположением индукции.

\item Пусть $y = 1$.

В данном случае по Утверждению \ref{y01} 
$$f(1x'2,1,z)=0=f(1x'2,1,z+1).$$
Что и требовалось.

\end{enumerate}
\end{enumerate}
Все случаи разобраны.

Утверждение доказано.

    \item 
Докажем Утверждение по индукции по $|x|$.

\underline{\textbf{База}}: $|x|=0\Longleftrightarrow x=\varepsilon$.

Несложно убедиться в том, что $f(2,y,z)$ выглядят следующим образом:
\begin{center}
\begin{tabular}{ | m{0.5cm} || m{0.5cm} | m{0.5cm} | m{0.5cm} | } 
  \hline
 & $y=0$ & $y=1$ & $y=2$ \\
  \hline \hline
$z=0$ & $\frac{1}{2}$ & $0$ & $-\frac{1}{2}$\\
  \hline
$z=1$ & $\frac{1}{2}$ & $0$ &$-\frac{1}{2}$ \\ 
  \hline
\end{tabular}
\end{center}
То есть $f(2,1,0)=f(2,1,1)=0$, что и требовалось.

\underline{\textbf{Переход}} к $x$: $|x|\ge 1$.

Ясно, что $|x|+1 \ge 2 $.

Разберём три случая:
\begin{enumerate}
    \item Пусть $z=0$.
    
    В данном случае
    $$f(2x,|x|+1,0)=0$$
    по формуле для функции $f$.
    \item Пусть $z>0$ и $\exists x': x=x'1$.
    
В данном случае воспользуемся рекурсивной формулой для $f$:
$$f(2x'1,|x|+1,z)=0\Longleftrightarrow f(2x'1,|x|+1,0)+f(2x',|x|,z-1)=0.$$

Знаем, что $f(2x'1,|x|+1,0)=0$ по формуле для функции $f$.

Также несложно заметить, что $|x|=|x'|+1$, а значит $f(2x',|x|,z-1)=0$ по предположению индукции.

\item Пусть $z>0$ и $\exists x': x=x'2$.

В данном случае воспользуемся рекурсивной формулой для $f$:
$$f(2x'2,|x|+1,z)=0\Longleftrightarrow \frac{f(2x'11,|x|+1,z+1)}{1-(|x|+1)}=0\Longleftrightarrow$$
$$\Longleftrightarrow \text{(Так как $|x|\ge 1$)}\Longleftrightarrow {f(2x'11,|x|+1,z+1)}=0.$$

Ясно, что $|2x'11|=|2x'2|=|2x|$, поэтому мы можем аналогично второму случаю свести это равенство к следующему равенству:
$${f(2x'1,|x|,z)}=0 $$

Несложно заметить, что $|x|=|x'|+1$, а значит мы можем воспользоваться предположением индукции.

\end{enumerate}
Все случаи разобраны.

Утверждение доказано.
\end{enumerate}
\end{proof}

Вернёмся к доказательству третьего случая.

Вспомним, что мы пришли к тому, что достаточно доказать, что $d\left(y,x,\overline{\Delta}\right)=o\left(y,x,\overline{\Delta}\right).$

Рассмотрим два подслучая:
\renewcommand{\labelenumi}{\arabic{enumi}$^\circ$}
\renewcommand{\labelenumii}{\arabic{enumi}.\arabic{enumii}$^\circ$}
\begin{enumerate}
    \item Пусть $\exists   y': y=2y'$.
    
Сразу заметим, что в данном случае $|y|\ge 2$.

После этого введём $\Delta'$ следующим образом:

\begin{tikzpicture}
\draw[ultra thick, ->] (0.5,0) -- (0.5,7);
\draw[thin, dashed, ->] (1.5,0) -- (1.5,7);
\draw[thin, dashed, ->] (2.5,0) -- (2.5,7);
\draw[thin, dashed, ->] (3.5,0) -- (3.5,7);
\draw[thin, dashed, ->] (4.5,0) -- (4.5,7);
\draw[thin, dashed, ->] (5.5,0) -- (5.5,7);
\draw[thin, dashed, ->] (6.5,0) -- (6.5,7);
\draw[thin, dashed, ->] (7.5,0) -- (7.5,7);
\draw[thin, dashed, ->] (8.5,0) -- (8.5,7);
\draw[thin, dashed, ->] (9.5,0) -- (9.5,7);
\draw[thin, dashed, ->] (10.5,0) -- (10.5,7);
\draw[thin, dashed, ->] (11.5,0) -- (11.5,7);
\draw[thin, dashed, ->] (12.5,0) -- (12.5,7);
\draw[thin, dashed, ->] (13.5,0) -- (13.5,7);
\draw[thin, dashed, ->] (14.5,0) -- (14.5,7);

\draw[ultra thick, ->] (0,0.5) -- (15,0.5);
\draw[thin, dashed, ->] (0,1.5) -- (15,1.5);
\draw[thin, dashed, ->] (0,2.5) -- (15,2.5);
\draw[thin, dashed, ->] (0,3.5) -- (15,3.5);
\draw[thin, dashed, ->] (0,4.5) -- (15,4.5);
\draw[thin, dashed, ->] (0,5.5) -- (15,5.5);
\draw[thin, dashed, ->] (0,6.5) -- (15,6.5);

\draw[ultra thick, red, ->] (0.5,4.5) -- (1.5,3.5);
\draw[ultra thick, red, ->] (1.5,3.5) -- (2.5,2.5);
\draw[ultra thick, red, ->] (2.5,2.5) -- (3.5,3.5);
\draw[ultra thick, red, ->] (3.5,3.5) -- (4.5,2.5);
\draw[ultra thick, red, ->] (4.5,2.5) -- (5.5,1.5);
\draw[ultra thick, red, ->] (5.5,1.5) -- (6.5,0.5);
\draw[ultra thick, red, ->] (6.5,0.5) -- (7.5,1.5);
\draw[ultra thick, red, ->] (7.5,1.5) -- (8.5,2.5);
\draw[ultra thick, red, ->] (8.5,2.5) -- (9.5,3.5);
\draw[ultra thick, red, ->] (9.5,3.5) -- (10.5,4.5);
\draw[ultra thick, red, ->] (10.5,4.5) -- (11.5,5.5);
\draw[ultra thick, red, ->] (11.5,5.5) -- (12.5,6.5);
\draw[ultra thick, red, ->] (12.5,6.5) -- (13.5,5.5);
\draw[ultra thick, red, ->] (13.5,5.5) -- (14.5,4.5);

\path (0.25,0.25) node  {$0$}
-- (1.25,0.25)  node  {$1$}
-- (2.25,0.25)  node  {$2$}
-- (3.25,0.25)  node  {$3$}
-- (4.25,0.25)  node  {$4$}
-- (5.25,0.25)  node  {$5$}
-- (6.25,0.25)  node  {$6$}
-- (7.25,0.25)  node  {$7$}
-- (8.25,0.25)  node  {$8$}
-- (9.25,0.25)  node  {$9$}
-- (10.25,0.25)  node  {$10$}
-- (11.25,0.25)  node  {$11$}
-- (12.25,0.25)  node  {$12$}
-- (13.25,0.25)  node  {$13$}
-- (14.25,0.25)  node  {$14$};

\path (0.25,1.25)  node  {$1$}
-- (0.25,2.25)  node  {$2$}
-- (0.25,3.25)  node  {$3$}
-- (0.25,4.25)  node  {$4$}
-- (0.25,5.25)  node  {$5$}
-- (0.25,6.25)  node  {$6$};
\end{tikzpicture}

$$$$

\begin{tikzpicture}

\draw[thin, dashed, ->] (0.5,0) -- (0.5,7);
\draw[thin, dashed, ->] (1.5,0) -- (1.5,7);
\draw[thin, dashed, ->] (2.5,0) -- (2.5,7);
\draw[thin, dashed, ->] (3.5,0) -- (3.5,7);
\draw[thin, dashed, ->] (4.5,0) -- (4.5,7);
\draw[thin, dashed, ->] (5.5,0) -- (5.5,7);
\draw[thin, dashed, ->] (6.5,0) -- (6.5,7);
\draw[thin, dashed, ->] (7.5,0) -- (7.5,7);
\draw[thin, dashed, ->] (8.5,0) -- (8.5,7);
\draw[thin, dashed, ->] (9.5,0) -- (9.5,7);
\draw[thin, dashed, ->] (10.5,0) -- (10.5,7);
\draw[thin, dashed, ->] (11.5,0) -- (11.5,7);
\draw[thin, dashed, ->] (12.5,0) -- (12.5,7);
\draw[thin, dashed, ->] (13.5,0) -- (13.5,7);
\draw[ultra thick, ->] (14.5,0) -- (14.5,7);

\draw[ultra thick, <-] (0,0.5) -- (15,0.5);
\draw[thin, dashed, <-] (0,1.5) -- (15,1.5);
\draw[thin, dashed, <-] (0,2.5) -- (15,2.5);
\draw[thin, dashed, <-] (0,3.5) -- (15,3.5);
\draw[thin, dashed, <-] (0,4.5) -- (15,4.5);
\draw[thin, dashed, <-] (0,5.5) -- (15,5.5);
\draw[thin, dashed, <-] (0,6.5) -- (15,6.5);

\draw[ultra thick, blue, <-] (0.5,4.5) -- (1.5,3.5);
\draw[ultra thick, blue, <-] (1.5,3.5) -- (2.5,2.5);
\draw[ultra thick, blue, <-] (2.5,2.5) -- (3.5,3.5);
\draw[ultra thick, blue, <-] (3.5,3.5) -- (4.5,2.5);
\draw[ultra thick, blue, <-] (4.5,2.5) -- (5.5,1.5);
\draw[ultra thick, blue, <-] (5.5,1.5) -- (6.5,0.5);
\draw[ultra thick, blue, <-] (6.5,0.5) -- (7.5,1.5);
\draw[ultra thick, blue, <-] (7.5,1.5) -- (8.5,2.5);
\draw[ultra thick, blue, <-] (8.5,2.5) -- (9.5,3.5);
\draw[ultra thick, blue, <-] (9.5,3.5) -- (10.5,4.5);
\draw[ultra thick, blue, <-] (10.5,4.5) -- (11.5,5.5);
\draw[ultra thick, blue, <-] (11.5,5.5) -- (12.5,6.5);
\draw[ultra thick, blue, <-] (12.5,6.5) -- (13.5,5.5);
\draw[ultra thick, blue, <-] (13.5,5.5) -- (14.5,4.5);

\path (0.75,0.25) node  {$14$}
-- (1.75,0.25)  node  {$13$}
-- (2.75,0.25)  node  {$12$}
-- (3.75,0.25)  node  {$11$}
-- (4.75,0.25)  node  {$10$}
-- (5.75,0.25)  node  {$9$}
-- (6.75,0.25)  node  {$8$}
-- (7.75,0.25)  node  {$7$}
-- (8.75,0.25)  node  {$6$}
-- (9.75,0.25)  node  {$5$}
-- (10.75,0.25)  node  {$4$}
-- (11.75,0.25)  node  {$3$}
-- (12.75,0.25)  node  {$2$}
-- (13.75,0.25)  node  {$1$}
-- (14.75,0.25)  node  {$0$};

\path (14.75,1.25)  node  {$1$}
-- (14.75,2.25)  node  {$2$}
-- (14.75,3.25)  node  {$3$}
-- (14.75,4.25)  node  {$4$}
-- (14.75,5.25)  node  {$5$}
-- (14.75,6.25)  node  {$6$};
\end{tikzpicture}

$$$$

\begin{tikzpicture}

\draw[thin, dashed, ->] (0.5,0) -- (0.5,7);
\draw[thin, dashed, ->] (1.5,0) -- (1.5,7);
\draw[thin, dashed, ->] (2.5,0) -- (2.5,7);
\draw[thin, dashed, ->] (3.5,0) -- (3.5,7);
\draw[thin, dashed, ->] (4.5,0) -- (4.5,7);
\draw[thin, dashed, ->] (5.5,0) -- (5.5,7);
\draw[thin, dashed, ->] (6.5,0) -- (6.5,7);
\draw[thin, dashed, ->] (7.5,0) -- (7.5,7);
\draw[thin, dashed, ->] (8.5,0) -- (8.5,7);
\draw[thin, dashed, ->] (9.5,0) -- (9.5,7);
\draw[thin, dashed, ->] (10.5,0) -- (10.5,7);
\draw[thin, dashed, ->] (11.5,0) -- (11.5,7);
\draw[thin, dashed, ->] (12.5,0) -- (12.5,7);
\draw[thin, dashed, ->] (13.5,0) -- (13.5,7);
\draw[ultra thick, ->] (14.5,0) -- (14.5,7);

\draw[ultra thick, <-] (0,0.5) -- (15,0.5);
\draw[thin, dashed, <-] (0,1.5) -- (15,1.5);
\draw[thin, dashed, <-] (0,2.5) -- (15,2.5);
\draw[thin, dashed, <-] (0,3.5) -- (15,3.5);
\draw[thin, dashed, <-] (0,4.5) -- (15,4.5);
\draw[thin, dashed, <-] (0,5.5) -- (15,5.5);
\draw[thin, dashed, <-] (0,6.5) -- (15,6.5);

\draw[ultra thick, green, <-] (0.5,2.5) -- (1.5,3.5);
\draw[ultra thick, green, <-] (1.5,3.5) -- (2.5,2.5);
\draw[ultra thick, green, <-] (2.5,2.5) -- (3.5,3.5);
\draw[ultra thick, green, <-] (3.5,3.5) -- (4.5,2.5);
\draw[ultra thick, green, <-] (4.5,2.5) -- (5.5,1.5);
\draw[ultra thick, green, <-] (5.5,1.5) -- (6.5,0.5);
\draw[ultra thick, green, <-] (6.5,0.5) -- (7.5,1.5);
\draw[ultra thick, green, <-] (7.5,1.5) -- (8.5,2.5);
\draw[ultra thick, green, <-] (8.5,2.5) -- (9.5,3.5);
\draw[ultra thick, green, <-] (9.5,3.5) -- (10.5,4.5);
\draw[ultra thick, green, <-] (10.5,4.5) -- (11.5,5.5);
\draw[ultra thick, green, <-] (11.5,5.5) -- (12.5,6.5);
\draw[ultra thick, green, <-] (12.5,6.5) -- (13.5,5.5);
\draw[ultra thick, green, <-] (13.5,5.5) -- (14.5,4.5);

\path (0.75,0.25) node  {$14$}
-- (1.75,0.25)  node  {$13$}
-- (2.75,0.25)  node  {$12$}
-- (3.75,0.25)  node  {$11$}
-- (4.75,0.25)  node  {$10$}
-- (5.75,0.25)  node  {$9$}
-- (6.75,0.25)  node  {$8$}
-- (7.75,0.25)  node  {$7$}
-- (8.75,0.25)  node  {$6$}
-- (9.75,0.25)  node  {$5$}
-- (10.75,0.25)  node  {$4$}
-- (11.75,0.25)  node  {$3$}
-- (12.75,0.25)  node  {$2$}
-- (13.75,0.25)  node  {$1$}
-- (14.75,0.25)  node  {$0$};

\path (14.75,1.25)  node  {$1$}
-- (14.75,2.25)  node  {$2$}
-- (14.75,3.25)  node  {$3$}
-- (14.75,4.25)  node  {$4$}
-- (14.75,5.25)  node  {$5$}
-- (14.75,6.25)  node  {$6$};

\end{tikzpicture}

На рисунках изображено, как по траектории $\Delta$ (верхний рисунок) строится траектория $\overline{\Delta}$ (средний рисунок), а по ней строится траектория $\Delta'$ (нижний рисунок). В данном случае последний шаг траектории $\overline{\Delta}$ -- это шаг ``вверх'', и мы просто меняем его на шаг ``вниз'', при это оставляя все значения $\overline{\Delta_i}$, кроме последнего, прежними.  Формально:
$$|\Delta'|=|\overline{\Delta}|; \;   \forall i \in \left\{ 1,...,|\Delta'|-1 \right\} \;  \Delta'_{i}=\overline{\Delta}_{i};\; \Delta'_{|\Delta'|}=\overline{\Delta}_{|\Delta'|}-2.$$

Несложно заметить, что 
\begin{itemize}
    \item $\Delta'_{|\Delta'|}=\overline{\Delta}_{|\Delta'|}-2=|y|-2=|y'|$;
    \item $ \Delta'\in \Omega(|y'|,|x|)$;
    \item $s(\Delta')=s\left(\overline{\Delta}\right)-1$;
    \item $\forall j \in \{1,...,s(\Delta')\}\quad e(\Delta',j)=e\left(\overline{\Delta},j\right)$;
    \item $\{|y|\}\cup E(\Delta')=E\left(\overline{\Delta}\right)$;
    \item $|\Delta'|=|\overline{\Delta}|$ и $s(\Delta')<s\left(\overline{\Delta}\right)$, то есть мы можем воспользоваться предположением индукции.
\end{itemize}

Но главное наблюдение заключается в том, что по Утверждению \ref{2vnachale} $\forall k\in\{1,...,r(y)\}$  существует биекция между $\overline{\Delta}$-путями $x...y_ky$ и $\Delta'$-путями $x...y_ky'$.
\renewcommand{\labelenumiii}{\arabic{enumi}.\arabic{enumii}.\arabic{enumiii}$^\circ$}
Далее рассмотрим два подслучая:
\begin{enumerate}
    \item Пусть $x\ne y$.
    
    В данном случае $z=h(y,x)<\#y$, а значит $h(y',x)=z$. С этими знаниями мы готовы считать. Посчитаем:
$$o\left(y,x,\overline{\Delta}\right)=\sum_{i=0}^{|y|}\left( {f\left(y,i,z\right)}\prod_{j=1}^{d(x)}\left(g\left(x,j\right)-i\right)\prod_{j=1}^{s\left(\overline{\Delta}\right)}\left(e\left(\overline{\Delta},j\right)-i\right)\right)=$$
$$=\sum_{i=0}^{|y|}\left( {f\left(2y',i,z\right)}\prod_{j=1}^{d(x)}\left(g\left(x,j\right)-i\right)\prod_{j=1}^{s\left(\overline{\Delta}\right)}\left(e\left(\overline{\Delta},j\right)-i\right)\right)=$$
$$=\text{(По Лемме \ref{nol})}= $$
$$=\sum_{i=0}^{|y'|}\left( \frac{f\left(y',i,z\right)}{|2y'|-i}\prod_{j=1}^{d(x)}\left(g\left(x,j\right)-i\right)\left(\prod_{j=1}^{s\left(\Delta'\right)}\left(e\left(\Delta',j\right)-i\right)\right)(|2y'|-i)\right)+$$
$$+\sum_{i=|y'|+1}^{|y'|+2}\left( {f\left(2y',i,z\right)}\prod_{j=1}^{d(x)}\left(g\left(x,j\right)-i\right)\prod_{j=1}^{s\left(\overline{\Delta}\right)}\left(e\left(\overline{\Delta},j\right)-i\right)\right).$$

Давайте докажем, что вторая сумма равняется нулю:
\begin{itemize}
    \item По Утверждению \ref{uroven} $\forall y'\in\mathbb{YF}$ и $z\in\{0,1,...,\#(2y')\}\quad$ $f(2y',|y'|+1,z)=0$, а значит
    $$ {f\left(2y',|y'|+1,z\right)}\prod_{j=1}^{d(x)}\left(g\left(x,j\right)-(|y'|+1)\right)\prod_{j=1}^{s\left(\overline{\Delta}\right)}\left(e\left(\overline{\Delta},j\right)-(|y'+1|)\right)=0.$$
    \item
    Знаем, что $e\left(\overline{\Delta},s(\Delta)\right)=|y|=|y'|+2$, а значит
    $$ {f\left(2y',|y'|+2,z\right)}\prod_{j=1}^{d(x)}\left(g\left(x,j\right)-(|y'|+2)\right)\prod_{j=1}^{s\left(\overline{\Delta}\right)}\left(e\left(\overline{\Delta},j\right)-(|y'+2|)\right)=0.$$
\end{itemize}

Таким образом, наше выражение равняется следующему выражению:
$$\sum_{i=0}^{|y'|}\left( {f\left(y',i,z\right)}\prod_{j=1}^{d(x)}\left(g\left(x,j\right)-i\right)\prod_{j=1}^{s\left(\Delta'\right)}\left(e\left(\Delta',j\right)-i\right)\right)=$$
$$=o(y',x,\Delta')=\text{(По предположению)}=d(y',x,\Delta')=d\left(y,x,\overline{\Delta}\right).$$

Что и требовалось.

    \item Пусть $x= y$.
    
    В данном случае $z=h(y,x)=\#y$, а значит $z':=h(y',x)=z-1$. Счёт будет аналогичным предыдущему случаю, однако, чуть более длинным:
$$o\left(y,x,\overline{\Delta}\right)=\sum_{i=0}^{|y|}\left( {f\left(y,i,z\right)}\prod_{j=1}^{d(x)}\left(g\left(x,j\right)-i\right)\prod_{j=1}^{s\left(\overline{\Delta}\right)}\left(e\left(\overline{\Delta},j\right)-i\right)\right)=$$
$$=\sum_{i=0}^{|y|}\left( {f\left(2y',i,z\right)}\prod_{j=1}^{d(x)}\left(g\left(x,j\right)-i\right)\prod_{j=1}^{s\left(\overline{\Delta}\right)}\left(e\left(\overline{\Delta},j\right)-i\right)\right)=$$
$$=\text{(По Утверждению \ref{uroven})}=$$
$$=\sum_{i=0}^{|y|}\left( {f\left(2y',i,z-1\right)}\prod_{j=1}^{d(x)}\left(g\left(x,j\right)-i\right)\prod_{j=1}^{s\left(\overline{\Delta}\right)}\left(e\left(\overline{\Delta},j\right)-i\right)\right)=$$
$$=\text{(По Лемме \ref{nol})}= $$
$$=\sum_{i=0}^{|y'|}\left( \frac{f\left(y',i,z'\right)}{|2y'|-i}\prod_{j=1}^{d(x)}\left(g\left(x,j\right)-i\right)\left(\prod_{j=1}^{s\left(\Delta'\right)}\left(e\left(\Delta',j\right)-i\right)\right)(|2y'|-i)\right)+$$
$$+\sum_{i=|y'|+1}^{|y'|+2}\left( {f\left(2y',i,z'\right)}\prod_{j=1}^{d(x)}\left(g\left(x,j\right)-i\right)\prod_{j=1}^{s\left(\overline{\Delta}\right)}\left(e\left(\overline{\Delta},j\right)-i\right)\right).$$

Давайте докажем, что вторая сумма равняется нулю:
\begin{itemize}
    \item По Утверждению \ref{uroven} $\forall y'\in\mathbb{YF}$ и $z\in\{0,1,...,\#(2y')\}\quad f(2y',|y'|+1,z)=0$, а значит
    $$ {f\left(2y',|y'|+1,z'\right)}\prod_{j=1}^{d(x)}\left(g\left(x,j\right)-(|y'|+1)\right)\prod_{j=1}^{s\left(\overline{\Delta}\right)}\left(e\left(\overline{\Delta},j\right)-(|y'+1|)\right)=0.$$
    \item
    Знаем, что $e\left(\overline{\Delta},s(\Delta)\right)=|y|=|y'|+2$, а значит
    $$ {f\left(2y',|y'|+2,z'\right)}\prod_{j=1}^{d(x)}\left(g\left(x,j\right)-(|y'|+2)\right)\prod_{j=1}^{s\left(\overline{\Delta}\right)}\left(e\left(\overline{\Delta},j\right)-(|y'+2|)\right)=0.$$
\end{itemize}

Таким образом, наше выражение равняется следующему выражению:
$$\sum_{i=0}^{|y'|}\left( {f\left(y',i,z'\right)}\prod_{j=1}^{d(x)}\left(g\left(x,j\right)-i\right)\prod_{j=1}^{s\left(\Delta'\right)}\left(e\left(\Delta',j\right)-i\right)\right)=$$
$$=o(y',x,\Delta')=\text{(По предположению)}=d(y',x,\Delta')=d\left(y,x,\overline{\Delta}\right).$$
Что и требовалось.

\end{enumerate}

\item Пусть $\exists   y': y=1y'$.

В данном случае введём $\Delta'$ следующим образом:

\begin{tikzpicture}
\draw[ultra thick, ->] (0.5,0) -- (0.5,7);
\draw[thin, dashed, ->] (1.5,0) -- (1.5,7);
\draw[thin, dashed, ->] (2.5,0) -- (2.5,7);
\draw[thin, dashed, ->] (3.5,0) -- (3.5,7);
\draw[thin, dashed, ->] (4.5,0) -- (4.5,7);
\draw[thin, dashed, ->] (5.5,0) -- (5.5,7);
\draw[thin, dashed, ->] (6.5,0) -- (6.5,7);
\draw[thin, dashed, ->] (7.5,0) -- (7.5,7);
\draw[thin, dashed, ->] (8.5,0) -- (8.5,7);
\draw[thin, dashed, ->] (9.5,0) -- (9.5,7);
\draw[thin, dashed, ->] (10.5,0) -- (10.5,7);
\draw[thin, dashed, ->] (11.5,0) -- (11.5,7);
\draw[thin, dashed, ->] (12.5,0) -- (12.5,7);
\draw[thin, dashed, ->] (13.5,0) -- (13.5,7);
\draw[thin, dashed, ->] (14.5,0) -- (14.5,7);

\draw[ultra thick, ->] (0,0.5) -- (15,0.5);
\draw[thin, dashed, ->] (0,1.5) -- (15,1.5);
\draw[thin, dashed, ->] (0,2.5) -- (15,2.5);
\draw[thin, dashed, ->] (0,3.5) -- (15,3.5);
\draw[thin, dashed, ->] (0,4.5) -- (15,4.5);
\draw[thin, dashed, ->] (0,5.5) -- (15,5.5);
\draw[thin, dashed, ->] (0,6.5) -- (15,6.5);

\draw[ultra thick, red, ->] (0.5,4.5) -- (1.5,3.5);
\draw[ultra thick, red, ->] (1.5,3.5) -- (2.5,2.5);
\draw[ultra thick, red, ->] (2.5,2.5) -- (3.5,3.5);
\draw[ultra thick, red, ->] (3.5,3.5) -- (4.5,2.5);
\draw[ultra thick, red, ->] (4.5,2.5) -- (5.5,1.5);
\draw[ultra thick, red, ->] (5.5,1.5) -- (6.5,0.5);
\draw[ultra thick, red, ->] (6.5,0.5) -- (7.5,1.5);
\draw[ultra thick, red, ->] (7.5,1.5) -- (8.5,2.5);
\draw[ultra thick, red, ->] (8.5,2.5) -- (9.5,3.5);
\draw[ultra thick, red, ->] (9.5,3.5) -- (10.5,4.5);
\draw[ultra thick, red, ->] (10.5,4.5) -- (11.5,5.5);
\draw[ultra thick, red, ->] (11.5,5.5) -- (12.5,6.5);
\draw[ultra thick, red, ->] (12.5,6.5) -- (13.5,5.5);
\draw[ultra thick, red, ->] (13.5,5.5) -- (14.5,4.5);

\path (0.25,0.25) node  {$0$}
-- (1.25,0.25)  node  {$1$}
-- (2.25,0.25)  node  {$2$}
-- (3.25,0.25)  node  {$3$}
-- (4.25,0.25)  node  {$4$}
-- (5.25,0.25)  node  {$5$}
-- (6.25,0.25)  node  {$6$}
-- (7.25,0.25)  node  {$7$}
-- (8.25,0.25)  node  {$8$}
-- (9.25,0.25)  node  {$9$}
-- (10.25,0.25)  node  {$10$}
-- (11.25,0.25)  node  {$11$}
-- (12.25,0.25)  node  {$12$}
-- (13.25,0.25)  node  {$13$}
-- (14.25,0.25)  node  {$14$};

\path (0.25,1.25)  node  {$1$}
-- (0.25,2.25)  node  {$2$}
-- (0.25,3.25)  node  {$3$}
-- (0.25,4.25)  node  {$4$}
-- (0.25,5.25)  node  {$5$}
-- (0.25,6.25)  node  {$6$};
\end{tikzpicture}

$$$$

\begin{tikzpicture}

\draw[thin, dashed, ->] (0.5,0) -- (0.5,7);
\draw[thin, dashed, ->] (1.5,0) -- (1.5,7);
\draw[thin, dashed, ->] (2.5,0) -- (2.5,7);
\draw[thin, dashed, ->] (3.5,0) -- (3.5,7);
\draw[thin, dashed, ->] (4.5,0) -- (4.5,7);
\draw[thin, dashed, ->] (5.5,0) -- (5.5,7);
\draw[thin, dashed, ->] (6.5,0) -- (6.5,7);
\draw[thin, dashed, ->] (7.5,0) -- (7.5,7);
\draw[thin, dashed, ->] (8.5,0) -- (8.5,7);
\draw[thin, dashed, ->] (9.5,0) -- (9.5,7);
\draw[thin, dashed, ->] (10.5,0) -- (10.5,7);
\draw[thin, dashed, ->] (11.5,0) -- (11.5,7);
\draw[thin, dashed, ->] (12.5,0) -- (12.5,7);
\draw[thin, dashed, ->] (13.5,0) -- (13.5,7);
\draw[ultra thick, ->] (14.5,0) -- (14.5,7);

\draw[ultra thick, <-] (0,0.5) -- (15,0.5);
\draw[thin, dashed, <-] (0,1.5) -- (15,1.5);
\draw[thin, dashed, <-] (0,2.5) -- (15,2.5);
\draw[thin, dashed, <-] (0,3.5) -- (15,3.5);
\draw[thin, dashed, <-] (0,4.5) -- (15,4.5);
\draw[thin, dashed, <-] (0,5.5) -- (15,5.5);
\draw[thin, dashed, <-] (0,6.5) -- (15,6.5);

\draw[ultra thick, blue, <-] (0.5,4.5) -- (1.5,3.5);
\draw[ultra thick, blue, <-] (1.5,3.5) -- (2.5,2.5);
\draw[ultra thick, blue, <-] (2.5,2.5) -- (3.5,3.5);
\draw[ultra thick, blue, <-] (3.5,3.5) -- (4.5,2.5);
\draw[ultra thick, blue, <-] (4.5,2.5) -- (5.5,1.5);
\draw[ultra thick, blue, <-] (5.5,1.5) -- (6.5,0.5);
\draw[ultra thick, blue, <-] (6.5,0.5) -- (7.5,1.5);
\draw[ultra thick, blue, <-] (7.5,1.5) -- (8.5,2.5);
\draw[ultra thick, blue, <-] (8.5,2.5) -- (9.5,3.5);
\draw[ultra thick, blue, <-] (9.5,3.5) -- (10.5,4.5);
\draw[ultra thick, blue, <-] (10.5,4.5) -- (11.5,5.5);
\draw[ultra thick, blue, <-] (11.5,5.5) -- (12.5,6.5);
\draw[ultra thick, blue, <-] (12.5,6.5) -- (13.5,5.5);
\draw[ultra thick, blue, <-] (13.5,5.5) -- (14.5,4.5);

\path (0.75,0.25) node  {$14$}
-- (1.75,0.25)  node  {$13$}
-- (2.75,0.25)  node  {$12$}
-- (3.75,0.25)  node  {$11$}
-- (4.75,0.25)  node  {$10$}
-- (5.75,0.25)  node  {$9$}
-- (6.75,0.25)  node  {$8$}
-- (7.75,0.25)  node  {$7$}
-- (8.75,0.25)  node  {$6$}
-- (9.75,0.25)  node  {$5$}
-- (10.75,0.25)  node  {$4$}
-- (11.75,0.25)  node  {$3$}
-- (12.75,0.25)  node  {$2$}
-- (13.75,0.25)  node  {$1$}
-- (14.75,0.25)  node  {$0$};

\path (14.75,1.25)  node  {$1$}
-- (14.75,2.25)  node  {$2$}
-- (14.75,3.25)  node  {$3$}
-- (14.75,4.25)  node  {$4$}
-- (14.75,5.25)  node  {$5$}
-- (14.75,6.25)  node  {$6$};
\end{tikzpicture}

$$$$

\begin{tikzpicture}

\draw[thin, dashed, ->] (0.5,0) -- (0.5,7);
\draw[thin, dashed, ->] (1.5,0) -- (1.5,7);
\draw[thin, dashed, ->] (2.5,0) -- (2.5,7);
\draw[thin, dashed, ->] (3.5,0) -- (3.5,7);
\draw[thin, dashed, ->] (4.5,0) -- (4.5,7);
\draw[thin, dashed, ->] (5.5,0) -- (5.5,7);
\draw[thin, dashed, ->] (6.5,0) -- (6.5,7);
\draw[thin, dashed, ->] (7.5,0) -- (7.5,7);
\draw[thin, dashed, ->] (8.5,0) -- (8.5,7);
\draw[thin, dashed, ->] (9.5,0) -- (9.5,7);
\draw[thin, dashed, ->] (10.5,0) -- (10.5,7);
\draw[thin, dashed, ->] (11.5,0) -- (11.5,7);
\draw[thin, dashed, ->] (12.5,0) -- (12.5,7);
\draw[thin, dashed, ->] (13.5,0) -- (13.5,7);
\draw[ultra thick, ->] (14.5,0) -- (14.5,7);

\draw[ultra thick, <-] (0,0.5) -- (15,0.5);
\draw[thin, dashed, <-] (0,1.5) -- (15,1.5);
\draw[thin, dashed, <-] (0,2.5) -- (15,2.5);
\draw[thin, dashed, <-] (0,3.5) -- (15,3.5);
\draw[thin, dashed, <-] (0,4.5) -- (15,4.5);
\draw[thin, dashed, <-] (0,5.5) -- (15,5.5);
\draw[thin, dashed, <-] (0,6.5) -- (15,6.5);

\draw[ultra thick, green, <-] (1.5,3.5) -- (2.5,2.5);
\draw[ultra thick, green, <-] (2.5,2.5) -- (3.5,3.5);
\draw[ultra thick, green, <-] (3.5,3.5) -- (4.5,2.5);
\draw[ultra thick, green, <-] (4.5,2.5) -- (5.5,1.5);
\draw[ultra thick, green, <-] (5.5,1.5) -- (6.5,0.5);
\draw[ultra thick, green, <-] (6.5,0.5) -- (7.5,1.5);
\draw[ultra thick, green, <-] (7.5,1.5) -- (8.5,2.5);
\draw[ultra thick, green, <-] (8.5,2.5) -- (9.5,3.5);
\draw[ultra thick, green, <-] (9.5,3.5) -- (10.5,4.5);
\draw[ultra thick, green, <-] (10.5,4.5) -- (11.5,5.5);
\draw[ultra thick, green, <-] (11.5,5.5) -- (12.5,6.5);
\draw[ultra thick, green, <-] (12.5,6.5) -- (13.5,5.5);
\draw[ultra thick, green, <-] (13.5,5.5) -- (14.5,4.5);

\path (0.75,0.25) node  {$14$}
-- (1.75,0.25)  node  {$13$}
-- (2.75,0.25)  node  {$12$}
-- (3.75,0.25)  node  {$11$}
-- (4.75,0.25)  node  {$10$}
-- (5.75,0.25)  node  {$9$}
-- (6.75,0.25)  node  {$8$}
-- (7.75,0.25)  node  {$7$}
-- (8.75,0.25)  node  {$6$}
-- (9.75,0.25)  node  {$5$}
-- (10.75,0.25)  node  {$4$}
-- (11.75,0.25)  node  {$3$}
-- (12.75,0.25)  node  {$2$}
-- (13.75,0.25)  node  {$1$}
-- (14.75,0.25)  node  {$0$};

\path (14.75,1.25)  node  {$1$}
-- (14.75,2.25)  node  {$2$}
-- (14.75,3.25)  node  {$3$}
-- (14.75,4.25)  node  {$4$}
-- (14.75,5.25)  node  {$5$}
-- (14.75,6.25)  node  {$6$};

\end{tikzpicture}

На рисунках изображено, как по траектории $\Delta$ (верхний рисунок) строится траектория $\overline{\Delta}$ (средний рисунок), а по ней строиться траектория $\Delta'$ (нижний рисунок). В данном случае последний шаг траектории $\overline{\Delta}$ -- это шаг ``вверх'', и мы просто убираем его.  Формально:
$$|\Delta'|=\left|\overline{\Delta}\right|-1; \;   \forall i \in \left\{ 1,...,|\Delta'| \right\} \;  \Delta'_{i}=\overline{\Delta}_{i}.$$

Несложно заметить, что:
\begin{itemize}
    \item $\Delta'_{|\Delta'|}=\overline{\Delta}_{|\Delta'|}=\overline{\Delta}_{\left|\overline{\Delta}\right|-1}=\overline{\Delta}_{\left|\overline{\Delta}\right|}-1=|y|-1=|y'|$;
    \item $ \Delta'\in \Omega(|y'|,|x|)$;
    \item $s(\Delta')=s\left(\overline{\Delta}\right)-1$;
    \item $\forall j \in \{1,...,s(\Delta')\}\quad e(\Delta',j)=e\left(\overline{\Delta},j\right)$;
    \item $\{|y|\}\cup E(\Delta')=E\left(\overline{\Delta}\right)$;
    \item $|\Delta'|<\left|\overline{\Delta}\right|$, то есть мы можем воспользоваться предположением индукции.
\end{itemize}

Но главное наблюдение заключается в том, что существует биекция между $\overline{\Delta}$-путями $x...y'y$ и $\Delta'$-путями $x...y'$, так как несложно убедиться в том, что $R(y)=R(1y')=\{y'\}$.
\renewcommand{\labelenumiii}{\arabic{enumi}.\arabic{enumii}.\arabic{enumiii}$^\circ$}

Далее рассмотрим два случая:
\begin{enumerate}
    \item Пусть $x\ne y$.
    
    В данном случае $z=h(y,x)<\#y$, а значит $h(y',x)=z$. С этими знаниями мы готовы считать. Посчитаем:
$$o\left(y,x,\overline{\Delta}\right)=\sum_{i=0}^{|y|}\left( {f\left(y,i,z\right)}\prod_{j=1}^{d(x)}\left(g\left(x,j\right)-i\right)\prod_{j=1}^{s\left(\overline{\Delta}\right)}\left(e\left(\overline{\Delta},j\right)-i\right)\right)=$$
$$=\sum_{i=0}^{|y|}\left( {f\left(1y',i,z\right)}\prod_{j=1}^{d(x)}\left(g\left(x,j\right)-i\right)\prod_{j=1}^{s\left(\overline{\Delta}\right)}\left(e\left(\overline{\Delta},j\right)-i\right)\right)=$$
$$=\text{(По Лемме \ref{nol})}= $$
$$=\sum_{i=0}^{|y'|}\left( \frac{f\left(y',i,z\right)}{|1y'|-i}\prod_{j=1}^{d(x)}\left(g\left(x,j\right)-i\right)\left(\prod_{j=1}^{s\left(\Delta'\right)}\left(e\left(\Delta',j\right)-i\right)\right)(|1y'|-i)\right)+$$
$$+{f\left(1y',|1y'|,z\right)}\prod_{j=1}^{d(x)}\left(g\left(x,j\right)-|1y'|\right)\prod_{j=1}^{s\left(\overline{\Delta}\right)}\left(e\left(\overline{\Delta},j\right)-|1y'|\right).$$

    Мы знаем, что $e\left(\overline{\Delta},s(\Delta)\right)=|y|=|y'|+1$, а значит
    $$ {f\left(1y',|y'|+1,z\right)}\prod_{j=1}^{d(x)}\left(g\left(x,j\right)-(|y'|+1)\right)\prod_{j=1}^{s\left(\overline{\Delta}\right)}\left(e\left(\overline{\Delta},j\right)-(|y'+1|)\right)=0.$$

Таким образом, наше выражение равняется следующему выражению:
$$\sum_{i=0}^{|y'|}\left( {f\left(y',i,z\right)}\prod_{j=1}^{d(x)}\left(g\left(x,j\right)-i\right)\prod_{j=1}^{s\left(\Delta'\right)}\left(e\left(\Delta',j\right)-i\right)\right)=$$
$$=o(y',x,\Delta')=\text{(По предположению)}=d(y',x,\Delta')=d\left(y,x,\overline{\Delta}\right).$$

Что и требовалось.

    \item Пусть $x= y$.
    
    В данном случае $z=h(y,x)=\#y$, а значит $z':=h(y',x)=z-1$. Счёт будет аналогичным предыдущему случаю, однако, чуть более длинным:
$$o\left(y,x,\overline{\Delta}\right)=\sum_{i=0}^{|y|}\left( {f\left(y,i,z\right)}\prod_{j=1}^{d(x)}\left(g\left(x,j\right)-i\right)\prod_{j=1}^{s\left(\overline{\Delta}\right)}\left(e\left(\overline{\Delta},j\right)-i\right)\right)=$$
$$=\sum_{i=0}^{|y|}\left( {f\left(1y',i,z\right)}\prod_{j=1}^{d(x)}\left(g\left(x,j\right)-i\right)\prod_{j=1}^{s\left(\overline{\Delta}\right)}\left(e\left(\overline{\Delta},j\right)-i\right)\right)=$$
$$=\text{(По Утверждению \ref{uroven})}=$$
$$=\sum_{i=0}^{|y'|}\left( {f\left(1y',i,z-1\right)}\prod_{j=1}^{d(x)}\left(g\left(x,j\right)-i\right)\prod_{j=1}^{s\left(\overline{\Delta}\right)}\left(e\left(\overline{\Delta},j\right)-i\right)\right)+$$
$$+{f\left(1y',|y'|+1,z\right)}\prod_{j=1}^{d(x)}\left(g\left(x,j\right)-(|y'|+1)\right)\prod_{j=1}^{s\left(\overline{\Delta}\right)}\left(e\left(\overline{\Delta},j\right)-(|y'+1|)\right)=$$
$$=\text{(По Лемме \ref{nol})}= $$
$$=\sum_{i=0}^{|y'|}\left( \frac{f\left(y',i,z'\right)}{|1y'|-i}\prod_{j=1}^{d(x)}\left(g\left(x,j\right)-i\right)\left(\prod_{j=1}^{s\left(\Delta'\right)}\left(e\left(\Delta',j\right)-i\right)\right)(|1y'|-i)\right)+$$
$$+{f\left(1y',|y'|+1,z\right)}\prod_{j=1}^{d(x)}\left(g\left(x,j\right)-(|y'|+1)\right)\prod_{j=1}^{s\left(\overline{\Delta}\right)}\left(e\left(\overline{\Delta},j\right)-(|y'+1|)\right).$$

    Мы знаем, что $e\left(\overline{\Delta},s(\Delta)\right)=|y|=|y'|+1$, а значит
    $$ {f\left(1y',|y'|+1,z\right)}\prod_{j=1}^{d(x)}\left(g\left(x,j\right)-(|y'|+1)\right)\prod_{j=1}^{s\left(\overline{\Delta}\right)}\left(e\left(\overline{\Delta},j\right)-(|y'+1|)\right)=0.$$

Таким образом, наше выражение равняется следующему выражению:
$$\sum_{i=0}^{|y'|}\left( {f\left(y',i,z'\right)}\prod_{j=1}^{d(x)}\left(g\left(x,j\right)-i\right)\prod_{j=1}^{s\left(\Delta'\right)}\left(e\left(\Delta',j\right)-i\right)\right)=$$
$$=o(y',x,\Delta')=\text{(По предположению)}=d(y',x,\Delta')=d\left(y,x,\overline{\Delta}\right).$$
Что и требовалось.
\end{enumerate}
\end{enumerate}
\end{enumerate}

Все случаи разобраны.

Теорема доказана.

\end{proof}

\begin{Col}
Пусть $x\in\mathbb{YF}$ и $\Delta\in\Omega(0,x)$. Тогда
$$d(\varepsilon,x,\Delta)=\prod_{j=1}^{d(x)}g\left(x,j\right)\prod_{j=1}^{s(\Delta)}e\left(\Delta,j\right).$$
\end{Col} 
\begin{proof}
$$d(\varepsilon,x,\Delta)=(\text{По Теореме \ref{delta}})=$$
$$=\sum_{i=0}^{|\varepsilon|}\left( {f\left(\varepsilon,i,h(\varepsilon,x)\right)}\prod_{j=1}^{d(x)}\left(g\left(x,j\right)-i\right)\prod_{j=1}^{s(\Delta)}\left(e\left(\Delta,j\right)-i\right)\right)=$$
$$=\prod_{j=1}^{d(x)}g\left(x,j\right)\prod_{j=1}^{s(\Delta)}e\left(\Delta,j\right).$$
\end{proof}

\subsection{Теорема о количестве $S$-путей между двумя вершинами в графе Юнга-Фибоначчи}

Итак, пусть $x,y\in \mathbb{YF}$, $|y| \ge |x|$, $S\in \mathbb{N}_0$.

Мы готовы искать число $yx$-$S$-путей в $\mathbb{YF}$, то есть $d(x,y,S)$.

\begin{Oboz}
Пусть $X,Y,S \in \mathbb{N}_0$. Тогда множество $$\left\{\Delta \in \Omega(X,Y): s(\Delta)=S\right\}$$ обозначим за $\Omega(X,Y,S)$.
\end{Oboz}

Из Определения \ref{sopr} (определения $S$-пути) следует
\begin{Zam}
При данных $x,y\in \mathbb{YF}$: $|y| \ge |x|$, $S\in \mathbb{N}_0$ $xy$-$S$-пути -- это в точности $xy$-$\Delta$-пути при всех $\Delta$: $s(\Delta)=S$, в частности,
$$d(x,y,S)=\sum_{\Delta\in\Omega(|x|,|y|,S)}d(x,y,\Delta).$$
\end{Zam}

Таким образом, мы сразу можем заметить, что
\begin{Nab}
$$d(x,y,S)=\sum_{\Delta\in\Omega(|x|,|y|,S)}d(x,y,\Delta)=$$
$$=\sum_{\Delta \in \Omega(|x|,|y|,S)} \sum_{i=0}^{|x|}\left( {f\left(x,i,z\right)}\prod_{j=1}^{d(y)}\left(g\left(y,j\right)-i\right)\prod_{j=1}^{s(\Delta)}\left(e\left(\Delta,j\right)-i\right)\right)=$$
$$= \sum_{i=0}^{|x|}\left(\left( \sum_{\Delta \in \Omega(x,y,S)}  \prod_{j=1}^{S}\left(e\left(\Delta,j\right)-i\right) \right) {f\left(x,i,z\right)}\prod_{j=1}^{d(y)}\left(g\left(y,j\right)-i\right)\right).$$
\end{Nab}

То есть, у нас уже есть  формула для $d(x,y,S)$. Проблема в том, что в ней слишком много слагаемых. Чтобы уменьшить число слагаемых, надо решить следующую задачу:
\begin{Problem} \label{wehavesome}
При $x,y\in\mathbb{YF}$: $|y|\ge|x|$ и $i \in \{0,...,|x|\}$ свернуть следующее выражение:
$$\sum_{\Delta \in \Omega(|x|,|y|,S)}  \prod_{j=1}^{S}\left(e\left(\Delta,j\right)-i\right).$$
\end{Problem}

Итак, давайте решать эту задачу.

Как устроено выражение из Задачи \ref{wehavesome}?

Вспомним, что $e(\Delta,j)$ -- это значение $\Delta_i$, которое достигается после $j$-ого шага ``вверх''. Как может быть утроена последовательность $$e(\Delta,1),e(\Delta,2),...,e(\Delta,S)?$$ 

\begin{Prop} \label{first}
Пусть $X,Y,S\in\mathbb{N}_0$.

Пусть также последовательность
$e(\Delta,1),$ $e(\Delta,2)$,...,$e(\Delta,S)$ удовлетворяет следующим четырём условиям:
\begin{enumerate}
\item $e(\Delta,1) \le Y+1$;

\item $\forall i \in \{1,...,S-1\} \quad e(\Delta,i+1) \le e(\Delta,i)+1 $;

\item $e(\Delta,S) \ge X$;

\item $\forall i \in \{1,...,S\}\quad e(\Delta,i) \ge 1 $.
\end{enumerate}
Тогда существует и единственна траектория $\Delta\in\Omega(X,Y,S)$ с такой средней функцией.
\end{Prop}

\begin{proof}
$\;$

\begin{tikzpicture}

\draw[ultra thick, ->] (0.5,0) -- (0.5,7);
\draw[thin, dashed, ->] (1.5,0) -- (1.5,7);
\draw[thin, dashed, ->] (2.5,0) -- (2.5,7);
\draw[thin, dashed, ->] (3.5,0) -- (3.5,7);
\draw[thin, dashed, ->] (4.5,0) -- (4.5,7);
\draw[thin, dashed, ->] (5.5,0) -- (5.5,7);
\draw[thin, dashed, ->] (6.5,0) -- (6.5,7);
\draw[thin, dashed, ->] (7.5,0) -- (7.5,7);
\draw[thin, dashed, ->] (8.5,0) -- (8.5,7);
\draw[thin, dashed, ->] (9.5,0) -- (9.5,7);
\draw[thin, dashed, ->] (10.5,0) -- (10.5,7);
\draw[thin, dashed, ->] (11.5,0) -- (11.5,7);
\draw[thin, dashed, ->] (12.5,0) -- (12.5,7);
\draw[thin, dashed, ->] (13.5,0) -- (13.5,7);
\draw[thin, dashed, ->] (14.5,0) -- (14.5,7);

\draw[ultra thick, ->] (0,0.5) -- (15,0.5);
\draw[thin, dashed, ->] (0,1.5) -- (15,1.5);
\draw[thin, dashed, ->] (0,2.5) -- (15,2.5);
\draw[thin, dashed, ->] (0,3.5) -- (15,3.5);
\draw[thin, dashed, ->] (0,4.5) -- (15,4.5);
\draw[thin, dashed, ->] (0,5.5) -- (15,5.5);
\draw[thin, dashed, ->] (0,6.5) -- (15,6.5);

\draw[ultra thick, red, dotted, ->] (0.5,3.5) -- (1.5,2.5);
\draw[ultra thick, red, dotted, ->] (1.5,2.5) -- (2.5,1.5);
\draw[ultra thick, red, ->] (2.5,1.5) -- (3.5,2.5);
\draw[ultra thick, red, dotted, ->] (3.5,2.5) -- (4.5,1.5);
\draw[ultra thick, red, dotted, ->] (4.5,1.5) -- (5.5,0.5);
\draw[ultra thick, red, ->] (5.5,0.5) -- (6.5,1.5);
\draw[ultra thick, red, ->] (6.5,1.5) -- (7.5,2.5);
\draw[ultra thick, red, dotted, ->] (7.5,2.5) -- (8.5,1.5);
\draw[ultra thick, red, ->] (8.5,1.5) -- (9.5,2.5);
\draw[ultra thick, red, ->] (9.5,2.5) -- (10.5,3.5);
\draw[ultra thick, red, dotted, ->] (10.5,3.5) -- (11.5,2.5);
\draw[ultra thick, red, dotted, ->] (11.5,2.5) -- (12.5,1.5);
\draw[ultra thick, red, dotted, ->] (12.5,1.5) -- (13.5,0.5);
\draw[ultra thick, red, ->] (13.5,0.5) -- (14.5,1.5);

\path (0.25,0.25) node  {$0$}
-- (1.25,0.25)  node  {$1$}
-- (2.25,0.25)  node  {$2$}
-- (3.25,0.25)  node  {$3$}
-- (4.25,0.25)  node  {$4$}
-- (5.25,0.25)  node  {$5$}
-- (6.25,0.25)  node  {$6$}
-- (7.25,0.25)  node  {$7$}
-- (8.25,0.25)  node  {$8$}
-- (9.25,0.25)  node  {$9$}
-- (10.25,0.25)  node  {$10$}
-- (11.25,0.25)  node  {$11$}
-- (12.25,0.25)  node  {$12$}
-- (13.25,0.25)  node  {$13$}
-- (14.25,0.25)  node  {$14$};

\path (0.25,1.25)  node  {$1$}
-- (0.25,2.25)  node  {$2$}
-- (0.25,3.25)  node  {$3$}
-- (0.25,4.25)  node  {$4$}
-- (0.25,5.25)  node  {$5$}
-- (0.25,6.25)  node  {$6$};

\path (3.25,2.75) node [font=\Large,red]  {$2$}
-- (6.25,1.75) node [font=\Large,red]  {$1$}
-- (7.25,2.75) node [font=\Large,red]  {$2$}
-- (9.25,2.75) node [font=\Large,red]  {$2$}
-- (10.25,3.75) node [font=\Large,red]  {$3$}
-- (14.25,1.75) node [font=\Large,red]  {$1$};

\end{tikzpicture}

\underline{\textbf{Существование}}:

На рисунке изображено построение $\Delta\in\Omega(1,3,6)$ по последовательности
$$e(\Delta,1)=2,\;e(\Delta,2)=1,\;e(\Delta,3)=2,\;e(\Delta,4)=2,\;e(\Delta,5)=3,\;e(\Delta,6)=1$$

Теперь в общем случае:

Построим следующую $\Delta\in\Omega(x,y,S)$.

Перед шагами ``вверх'', между шагами ``вверх'' и после шагов ``вверх'' вставим нужное число шагов ``вниз''. Формально:
\begin{itemize}
    \item $\Delta_0=Y$;
    \item $\Delta_1=Y-1$;
    \item $\Delta_2=Y-2$;
    \item ...
    \item $\Delta_{Y-e(\Delta,1)+1}=e(\Delta,1)-1$;
    \item $\Delta_{Y-e(\Delta,1)+2}=e(\Delta,1)$;
    \item $\Delta_{Y-e(\Delta,1)+3}=e(\Delta,1)-1$;
    \item $\Delta_{Y-e(\Delta,1)+4}=e(\Delta,1)-2$;
    \item ...
    \item $\Delta_{Y-e(\Delta,2)+3}=e(\Delta,2)-1$;
    \item ... ...
    \item $\Delta_{Y-e(\Delta,S)+2S-1}=e(\Delta,S)-1$;
    \item $\Delta_{Y-e(\Delta,S)+2S}=e(\Delta,S)$;
    \item $\Delta_{Y-e(\Delta,S)+2S+1}=e(\Delta,S)-1$;
    \item ...
    \item $\Delta_{Y-X+2S}=X$.
\end{itemize}

Давайте убедимся в том, что мы действительно построили $\Delta\in\Omega(X,Y,S)$:

\begin{itemize}
    \item $\Delta_0=Y$, $\Delta_{|\Delta|}=X$, $|\Delta|=S$ -- очевидно.
    \item Построенная траектория $\Delta$ состоит из $(S+1)$-ого цикла, каждый из которых состоит какого-то (может быть, нулевого) количества шагов ``вниз'', причём между каждыми двумя соседними циклами есть ровно один шаг ``вверх''.
    \item Первый цикл шагов ``вниз'' состоит из $Y-e(\Delta,1)+1$ шагов ``вниз''. По первому условию $Y-e(\Delta,1)+1 \ge 0$, а значит построение первого цикла шагов ``вниз'' корректно.
    \item Цикл шагов ``вниз'' между $j$-ым и $(j+1)$-ым шагом ``вверх'' ($j\in\{1,...,S-1\}$) состоит из $e(\Delta,j)-e(\Delta,j+1)+1$ шагов ``вниз''. По второму условию $e(\Delta,j)-e(\Delta,j+1)+1 \ge 0$,  а значит построение этих циклов шагов ``вниз'' корректно.
    \item Последний цикл шагов ``вниз'' состоит из $e(\Delta,S)-X$ шагов ``вниз''. По третьему условию $e(\Delta,S)-X \ge 0$, а значит построение последнего цикла шагов ``вниз'' корректно.
    \item Каждый цикл шагов ``вниз'' заканчивается на $\Delta_i=e(\Delta,j)-1$ при $j\in\{1,...,S-1\}$ или на $X-1$, а значит по четвёртому условию $\forall i\in \{0,...,|\Delta|\}$ $\Delta_i \ge 0$. 
\end{itemize}

Убедились.

\underline{\textbf{Единственность}}:

Пусть у нас есть другая траектория $\Delta \in \Omega(x,y,S)$ с данной последовательностью $e(\Delta,1)$,..., $e(\Delta,1)$. Рассмотрим её в следующем виде:
\begin{itemize}
    \item $a_0$ шагов ``вниз'';
    \item шаг ``вверх'';
    \item $a_1$ шагов ``вниз'';
    \item шаг ``вверх'';
    \item ...
    \item $a_{S-1}$ шагов ``вниз'';
    \item шаг ``вверх'';
    \item $a_{S}$ шагов ``вниз''.
\end{itemize}

Последовательно замечаем, что если
\begin{itemize}
    \item $a_0 \ne Y-e(\Delta,1)+1$, то $e(\Delta,1)$ получается неправильным;
    \item (при $j\in\{1,...,S-1\}$) $a_j\ne e(\Delta,j)-e(\Delta,j+1)+1$, то $e(\Delta,j+1)$ получается неправильным;
    \item $a_S \ne e(\Delta,S)-X$, то $\Delta_{|\Delta|}\ne X$.
\end{itemize}

Таким образом, траектория, построенная выше, является единственной.
\end{proof}

\begin{Prop} \label{second}
Пусть $X,Y,S\in\mathbb{N}_0$.

Тогда любая траектория $\Delta\in\Omega(X,Y,S)$ удовлетворяет следующим четырём условиям:
\begin{enumerate}
\item $e(\Delta,1) \le Y+1$;

\item $\forall i \in \{1,...,S-1\}\quad e(\Delta,i+1) \le e(\Delta,i)+1 $;

\item $e(\Delta,S) \ge X$;

\item $\forall i \in \{1,...,S\}\quad e(\Delta,i) \ge 1 $.
\end{enumerate}
\end{Prop}
\begin{proof}
Рассмотрим траекторию $\Delta$ в следующем виде:
\begin{itemize}
    \item $a_0$ шагов ``вниз'';
    \item шаг ``вверх'';
    \item $a_1$ шагов ``вниз'';
    \item шаг ``вверх'';
    \item ...
    \item $a_{S-1}$ шагов ``вниз'';
    \item шаг ``вверх'';
    \item $a_{S}$ шагов ``вниз''.
\end{itemize}

Ясно, что $a_0\ge0,...,\;
a_S\ge0$.

Кроме того, ясно, что:
\begin{itemize}
    \item $e(\Delta,1)=Y-a_0+1\le Y+1$;
    \item (при
    $j\in\{1,...,S-1\}$) $e(\Delta,j+1)=e(\Delta,j)-a_j+1\le e(\Delta,j)+1$;
    \item $X=e(\Delta,S)-a_S\le e(\Delta,1)$;
    \item (при $j\in\{1,...,S\}$) $\exists i_j:$ $e(\Delta,j)=\Delta_i+1\ge 1$.
\end{itemize}
\end{proof}

\begin{Def}
Пусть $X,Y,S\in\mathbb{N}_0$.

Тогда последовательность $\lambda=\{\lambda_1,\lambda_2,...,\lambda_S\}$: $\forall i\in\{1,...,S\}$ $\lambda_i \in \mathbb{Z}$  будем называть хорошей $(X,Y,S)$-последовательностью, если она удовлетворяет следующим условиям:
\begin{enumerate}
\item $\lambda_1 \le Y+1$;

\item $\forall i \in \{1,...,S-1\}\quad \lambda_{i+1} \le \lambda_{i}+1 $;

\item $\lambda_{S} \ge X$;

\item $\forall i \in \{1,...,S\}\quad\lambda_{i} \ge 1 $.
\end{enumerate}

\begin{Oboz} 
$\;$
\begin{itemize}
    \item Пусть $X,Y,S\in \mathbb{N}_0$. Тогда множество хороших $(X,Y,S)$-последовательностей обозначим за $\Lambda(X,Y,S)$;

    \item Множество всех хорших $(X,Y,S)$-последовательностей при каких-то $X,Y,S\in \mathbb{N}_0$ обозначим за $\Lambda$.
\end{itemize}

\end{Oboz}

\begin{Col}[Из Утверждений \ref{first} и \ref{second}] \label{Kesha}

Пусть $X,Y,S\in\mathbb{N}_0$.

Тогда существует биекция $B(X,Y,S):\Omega(X,Y,S)\leftrightarrow\Lambda(X,Y,S)$, такая, что если $B(\Delta)=\lambda$, то $\forall j\in\{1,...,S\}$ 
$$e(\Delta,j)=\lambda_j.$$
В частности,
$$\sum_{\Delta \in \Omega(X,Y,S)}  \prod_{j=1}^{S}e\left(\Delta,j\right)=\sum_{\lambda \in \Lambda(X,Y,S)}  \prod_{j=1}^{S}\lambda_j.$$
\end{Col}
\end{Def}

\begin{Def}
Пусть $X,Y\in \mathbb{N}_0$: $Y\ge X$. Тогда последовательность $\Delta=\{\Delta_i: i \in \{0,1,...,k\}\}$ при некотором $k \in \mathbb{N}_0$ назовём $(Y;X)$-псевдотраекторией, если выполняются следующие условия:
\begin{enumerate}
    \item $ \forall i \in \{1,...,k\} \quad \Delta_i-\Delta_{i-1} = \pm 1$;
    \item$ \Delta_0=Y $;
    \item$ \Delta_k=X$.
\end{enumerate}
\end{Def}

\begin{Zam}
Траектории -- это такие псевдотраектории, что $\Delta_i$ всегда неотрицательны.
\end{Zam}

\begin{Oboz}
$\;$
\begin{itemize}
    \item Пусть $X,Y\in \mathbb{N}_0$. Тогда множество $(Y; X)$-псевдотраекторий обозначим за $\Omega'(X,Y)$.
    \item Множество всех $(Y;X)$-псевдотраекторий при каких-то $X,Y\in \mathbb{N}_0$ обозначим за $\Omega'$.
    \item Кроме того, давайте для псевдотраекторий использовать все определения и обозначения, которые были даны для траекторий.
\end{itemize}
\end{Oboz}

\begin{Oboz} При $X,Y,S,i\in\mathbb{N}_0$: $Y  \ge X \ge i \ge 0$ введём следующие обозначения:
    \begin{itemize}
        \item $$\Xi(X,Y,S,i):=\sum_{\Delta \in \Omega(X,Y,S)}  \prod_{j=1}^{S}\left(e\left(\Delta,j\right)-i\right);$$

        \item $$\Xi'(X,Y,S,i)=\sum_{\Delta \in \Omega'(X,Y,S)}  \prod_{j=1}^{S}\left(e\left(\Delta,j\right)-i\right).$$
    \end{itemize}
\end{Oboz}

\begin{Prop} \label{strih}
Пусть $X,Y,S,i\in\mathbb{N}_0$: $Y  \ge X \ge i \ge 0$. Тогда
$$\Xi(X,Y,S,i)=\Xi'(X,Y,S,i).$$
\end{Prop}
\begin{proof}

$\;$

Мы знаем, что если $\Delta \in \Omega(X,Y,S)$, то $\Delta \in \Omega'(X,Y,S)$, поэтому нам достаточно доказать, что 
$$\sum_{\Delta \in \Omega'(X,Y,S)\textbackslash\Omega(X,Y,S)}  \prod_{j=1}^{S}\left(e\left(\Delta,j\right)-i\right).$$

Рассмотрим $\Delta \in \Omega'(X,Y,S) \textbackslash \Omega(X,Y,S)$.

Ясно, что в данном случае $\exists i\in\{0,...,|\Delta|\}: \Delta_i < 0$.

Пусть после $i$-ого шага траектория $\Delta$ перед ближайшим шагом ``вверх'' состоит из $a$ шагов ``вниз'', и пусть ближайший шаг ``вверх'' -- $j$-ый шаг ``вверх'. Тогда ясно, что $a\ge0$, и поэтому $e(\Delta,j)=\Delta_i-a+1\le \Delta_i+1 \le 0 \le i$. 

Пусть $e(\Delta,j)<i$. Помним, что $\forall k \in \{1,...,S-1\}\quad e(\Delta,k+1) \le e(\Delta,k)+1 $, а поэтому $e(\Delta,j+1) \le e(\Delta,j)+1 \le i$.

Повторим это рассуждение несколько раз (после рассуждения с $e(\Delta,j)$ проделаем рассуждение с $e(\Delta,j+1)$, $e(\Delta,j+2)$,..., $e(\Delta,S-1)$). Так мы либо найдём $l\in\{j,...,S\}:e(\Delta,l) = i$, либо при каждом $l\in\{j,...,S\}$ будем получать, что $e(\Delta,l) \le i$, и в итоге придём к тому, что $e(\Delta,S) \le i$.

В таком случае вспомним, что $i\le X$, а значит $e(\Delta,S) \le X$. По Утверждению \ref{second} $e(\Delta,S) \le X$, поэтому $e(\Delta,S) = X$, а таким образом, из того, что $e(\Delta,S) \le i\le X$, следует, что $e(\Delta,S)= i = X$.

Таким образом, в любом случае мы нашли $l\in\{1,...,S\}:e(\Delta,l) = i$, а значит  для любой траектории $\Delta \in \Omega'(X,Y,S) \textbackslash \Omega(X,Y,S)$ 
$$\prod_{j=1}^{S}\left(e\left(\Delta,j\right)-i\right)=0,$$
а поэтому
$$\sum_{\Delta \in \Omega'(X,Y,S)\textbackslash\Omega(X,Y,S)}  \prod_{j=1}^{S}\left(e\left(\Delta,j\right)-i\right)=0.$$
Что и требовалось.
\end{proof}

\begin{Prop} \label{sdvig}
Пусть $X,Y,S,i\in\mathbb{N}_0$: $Y  \ge X \ge i \ge 0$. Тогда
$$\Xi(X,Y,S,i)=\Xi(X-i,Y-i,S,0).$$
\end{Prop}
\begin{proof}
Из Утверждения \ref{strih} мнговенно следует, что нам достаточно доказать, что $$\Xi'(X,Y,S,i)=\Xi'(X-i,Y-i,S,0).$$

Давайте это докажем.

Введём отображение $M:\Omega'(X,Y,S) \rightarrow \Omega'(X-i,Y-i,S)$ следующим образом:

\begin{tikzpicture}

\draw[ultra thick, ->] (0.5,0) -- (0.5,7);
\draw[thin, dashed, ->] (1.5,0) -- (1.5,7);
\draw[thin, dashed, ->] (2.5,0) -- (2.5,7);
\draw[thin, dashed, ->] (3.5,0) -- (3.5,7);
\draw[thin, dashed, ->] (4.5,0) -- (4.5,7);
\draw[thin, dashed, ->] (5.5,0) -- (5.5,7);
\draw[thin, dashed, ->] (6.5,0) -- (6.5,7);
\draw[thin, dashed, ->] (7.5,0) -- (7.5,7);
\draw[thin, dashed, ->] (8.5,0) -- (8.5,7);
\draw[thin, dashed, ->] (9.5,0) -- (9.5,7);
\draw[thin, dashed, ->] (10.5,0) -- (10.5,7);
\draw[thin, dashed, ->] (11.5,0) -- (11.5,7);
\draw[thin, dashed, ->] (12.5,0) -- (12.5,7);
\draw[thin, dashed, ->] (13.5,0) -- (13.5,7);
\draw[thin, dashed, ->] (14.5,0) -- (14.5,7);

\draw[ultra thick, ->] (0,0.5) -- (15,0.5);
\draw[thin, dashed, ->] (0,1.5) -- (15,1.5);
\draw[thin, dashed, ->] (0,2.5) -- (15,2.5);
\draw[thin, dashed, ->] (0,3.5) -- (15,3.5);
\draw[thin, dashed, ->] (0,4.5) -- (15,4.5);
\draw[thin, dashed, ->] (0,5.5) -- (15,5.5);
\draw[thin, dashed, ->] (0,6.5) -- (15,6.5);

\draw[ultra thick, red, ->] (0.5,5.5) -- (1.5,4.5);
\draw[ultra thick, red, ->] (1.5,4.5) -- (2.5,3.5);
\draw[ultra thick, red, ->] (2.5,3.5) -- (3.5,2.5);
\draw[ultra thick, red, ->] (3.5,2.5) -- (4.5,1.5);
\draw[ultra thick, red, ->] (4.5,1.5) -- (5.5,0.5);
\draw[ultra thick, red, ->] (5.5,0.5) -- (6.5,1.5);
\draw[ultra thick, red, ->] (6.5,1.5) -- (7.5,2.5);
\draw[ultra thick, red, ->] (7.5,2.5) -- (8.5,1.5);
\draw[ultra thick, red, ->] (8.5,1.5) -- (9.5,2.5);
\draw[ultra thick, red, ->] (9.5,2.5) -- (10.5,3.5);
\draw[ultra thick, red, ->] (10.5,3.5) -- (11.5,2.5);
\draw[ultra thick, red, ->] (11.5,2.5) -- (12.5,1.5);
\draw[ultra thick, red, ->] (12.5,1.5) -- (13.5,2.5);
\draw[ultra thick, red, ->] (13.5,2.5) -- (14.5,3.5);

\path (0.25,0.25) node  {$0$}
-- (1.25,0.25)  node  {$1$}
-- (2.25,0.25)  node  {$2$}
-- (3.25,0.25)  node  {$3$}
-- (4.25,0.25)  node  {$4$}
-- (5.25,0.25)  node  {$5$}
-- (6.25,0.25)  node  {$6$}
-- (7.25,0.25)  node  {$7$}
-- (8.25,0.25)  node  {$8$}
-- (9.25,0.25)  node  {$9$}
-- (10.25,0.25)  node  {$10$}
-- (11.25,0.25)  node  {$11$}
-- (12.25,0.25)  node  {$12$}
-- (13.25,0.25)  node  {$13$}
-- (14.25,0.25)  node  {$14$};

\path (0.25,1.25)  node  {$1$}
-- (0.25,2.25)  node  {$2$}
-- (0.25,3.25)  node  {$3$}
-- (0.25,4.25)  node  {$4$}
-- (0.25,5.25)  node  {$5$}
-- (0.25,6.25)  node  {$6$};

\end{tikzpicture}

$$$$

\begin{tikzpicture}

\draw[ultra thick, ->] (0.5,0) -- (0.5,7);
\draw[thin, dashed, ->] (1.5,0) -- (1.5,7);
\draw[thin, dashed, ->] (2.5,0) -- (2.5,7);
\draw[thin, dashed, ->] (3.5,0) -- (3.5,7);
\draw[thin, dashed, ->] (4.5,0) -- (4.5,7);
\draw[thin, dashed, ->] (5.5,0) -- (5.5,7);
\draw[thin, dashed, ->] (6.5,0) -- (6.5,7);
\draw[thin, dashed, ->] (7.5,0) -- (7.5,7);
\draw[thin, dashed, ->] (8.5,0) -- (8.5,7);
\draw[thin, dashed, ->] (9.5,0) -- (9.5,7);
\draw[thin, dashed, ->] (10.5,0) -- (10.5,7);
\draw[thin, dashed, ->] (11.5,0) -- (11.5,7);
\draw[thin, dashed, ->] (12.5,0) -- (12.5,7);
\draw[thin, dashed, ->] (13.5,0) -- (13.5,7);
\draw[thin, dashed, ->] (14.5,0) -- (14.5,7);

\draw[ultra thick, ->] (0,2.5) -- (15,2.5);
\draw[thin, dashed, ->] (0,1.5) -- (15,1.5);
\draw[thin, dashed, ->] (0,0.5) -- (15,0.5);
\draw[thin, dashed, ->] (0,3.5) -- (15,3.5);
\draw[thin, dashed, ->] (0,4.5) -- (15,4.5);
\draw[thin, dashed, ->] (0,5.5) -- (15,5.5);
\draw[thin, dashed, ->] (0,6.5) -- (15,6.5);

\draw[ultra thick, blue, ->] (0.5,5.5) -- (1.5,4.5);
\draw[ultra thick, blue, ->] (1.5,4.5) -- (2.5,3.5);
\draw[ultra thick, blue, ->] (2.5,3.5) -- (3.5,2.5);
\draw[ultra thick, blue, ->] (3.5,2.5) -- (4.5,1.5);
\draw[ultra thick, blue, ->] (4.5,1.5) -- (5.5,0.5);
\draw[ultra thick, blue, ->] (5.5,0.5) -- (6.5,1.5);
\draw[ultra thick, blue, ->] (6.5,1.5) -- (7.5,2.5);
\draw[ultra thick, blue, ->] (7.5,2.5) -- (8.5,1.5);
\draw[ultra thick, blue, ->] (8.5,1.5) -- (9.5,2.5);
\draw[ultra thick, blue, ->] (9.5,2.5) -- (10.5,3.5);
\draw[ultra thick, blue, ->] (10.5,3.5) -- (11.5,2.5);
\draw[ultra thick, blue, ->] (11.5,2.5) -- (12.5,1.5);
\draw[ultra thick, blue, ->] (12.5,1.5) -- (13.5,2.5);
\draw[ultra thick, blue, ->] (13.5,2.5) -- (14.5,3.5);

\path (0.25,0.25) node  {$-2$}
-- (1.25,2.25)  node  {$1$}
-- (2.25,2.25)  node  {$2$}
-- (3.25,2.25)  node  {$3$}
-- (4.25,2.25)  node  {$4$}
-- (5.25,2.25)  node  {$5$}
-- (6.25,2.25)  node  {$6$}
-- (7.25,2.25)  node  {$7$}
-- (8.25,2.25)  node  {$8$}
-- (9.25,2.25)  node  {$9$}
-- (10.25,2.25)  node  {$10$}
-- (11.25,2.25)  node  {$11$}
-- (12.25,2.25)  node  {$12$}
-- (13.25,2.25)  node  {$13$}
-- (14.25,2.25)  node  {$14$};

\path (0.25,1.25)  node  {$-1$}
-- (0.25,2.25)  node  {$0$}
-- (0.25,3.25)  node  {$1$}
-- (0.25,4.25)  node  {$2$}
-- (0.25,5.25)  node  {$3$}
-- (0.25,6.25)  node  {$4$};
\end{tikzpicture}

На рисунках изображено, как по псевдотраектории $\Delta$ (верхний рисунок) строится псевдотраектория $М(\Delta)$ (нижний рисунок). В данном случае операция $M$ -- это просто ``сдвиг'' псевдотраектории $\Delta$ на $i$ ``уровней'' ``вниз''. Формально:
$$|M(\Delta)|=|{\Delta}|; \;   \forall j \in \left\{ 0,...,|M(\Delta)| \right\} \;  M(\Delta)_{j}=\Delta_{j}-i.$$

Несложно заметить, что 
\begin{itemize}
    \item $ \Delta'\in \Omega'(X-i,Y-i,S)$;
    \item $\forall j \in \{1,...,S\} \quad e(M(\Delta),j)=e(\Delta,j)-i$;
    \item $|M(\Delta)|=|\Delta|$;
    \item обратное отображение к $M$ -- это отображение $M':\Omega'(X-i,Y-i,S) \rightarrow \Omega'(X,Y,S)$. Оно является просто ``сдвигом'' данной псевдотраектории на $i$ ``уровней'' ``вверх''. Формально:
    $$|M'(\Delta)|=|{\Delta}|; \;   \forall j \in \left\{ 0,...,|M(\Delta)| \right\} \;  M(\Delta)_{j}=\Delta_{j}+i;$$
    \item M -- биекция между $\Omega'(X,Y,S)$ и $\Omega'(X-i,Y-i,S)$.
\end{itemize}

Таким образом,
$$\prod_{j=1}^{S}\left(e\left(\Delta,j\right)-i\right)=\prod_{j=1}^{S}e\left(M(\Delta),j\right).$$

Просуммировав это равенство по $\Omega'(X,Y,S)$, мы получим, что
$$\sum_{\Delta \in \Omega'(X,Y,S)}  \prod_{j=1}^{S}\left(e\left(\Delta,j\right)-i\right)=\sum_{\Delta \in \Omega'(X,Y,S)}  \prod_{j=1}^{S}e\left(M(\Delta),j\right) \Longleftrightarrow$$
$$\Longleftrightarrow\text{(Так как $M$ -- биекция)}\Longleftrightarrow$$
$$\Longleftrightarrow\sum_{\Delta \in \Omega'(X,Y,S)}  \prod_{j=1}^{S}\left(e\left(\Delta,j\right)-i\right)=\sum_{\Delta \in \Omega'(X-i,Y-i,S)}  \prod_{j=1}^{S}e\left(\Delta,j\right) \Longleftrightarrow$$
$$\Longleftrightarrow \Xi'(X,Y,S,i)=\Xi'(X-i,Y-i,S,0)\Longleftrightarrow$$
$$\Longleftrightarrow \text{(по Утверждению \ref{strih})}\Longleftrightarrow$$
$$\Longleftrightarrow \Xi(X,Y,S,i)=\Xi(X-i,Y-i,S,0).$$
\end{proof}

Таким образом, нам достаточно найти только $\Xi(X,Y,S,0)$ при $X,Y,S\in\mathbb{N}_0$: $Y \ge X \ge 0$. 

\begin{Oboz}
Обозначим $\Xi(X,Y,S,0)$ за $\Xi(X,Y,S)$.
\end{Oboz}

\begin{Prop} \label{s0}
Пусть $X,Y\in\mathbb{N}_0$: $Y \ge X \ge 0$. Тогда
$$ \Xi(X,Y,0)=1.$$
\end{Prop}
\begin{proof}
 $$\Xi(X,Y,0) = \sum_{\Delta \in \Omega(X,Y,0)}  \prod_{j=1}^{0}e\left(\Delta,j\right)=\sum_{\Delta \in \Omega(X,Y,0)} 1.$$
    
    Нам осталось посчитать количество траекторий $\Delta \in \Omega(X,Y,0)$, то есть траекторий с началом в $Y$, концом в $X$ и нулём шагов ''вверх``. Ясно, что единственная такая траектория -- траектория, состоящая только из $(Y-X)$ шагов ``вниз'' (помним, что $Y\ge X$), поэтому
    $$\Xi(X,Y,0) = \sum_{\Delta \in \Omega(X,Y,0)}  \prod_{j=1}^{0}e\left(\Delta,j\right)=\sum_{\Delta \in \Omega(X,Y,0)} 1=1.$$
    Что и требовалось.
\end{proof}

\begin{Prop} \label{vazhno}
Пусть $X,Y,S\in\mathbb{N}_0$: $Y \ge X \ge 0$. Тогда:

\begin{enumerate}
    \item если $ X \ge 2$ и $S\ge 1$, то $$\Xi(X,Y,S)=\Xi(X-1,Y,S)-\Xi(X-2,Y,S-1)\cdot (X-1);$$
    \item $$ \Xi(1,Y,S)=\Xi(0,Y,S).$$
\end{enumerate}
\end{Prop}
\begin{proof}

По формуле для $\Xi$ мы хотим доказать следующее:
\begin{enumerate}
    \item $$\Xi(X,Y,S)=\Xi(X-1,Y,S)-\Xi(X-2,Y,S-1)\cdot (X-1) \Longleftrightarrow$$ 
    $$\Longleftrightarrow\sum_{\Delta \in \Omega(X-1,Y,S)}  \prod_{j=1}^{S}e\left(\Delta,j\right)-\sum_{\Delta \in \Omega(X,Y,S)}  \prod_{j=1}^{S}e\left(\Delta,j\right)=(X-1)\sum_{\Delta \in \Omega(X-2,Y,S-1)}  \prod_{j=1}^{S}e\left(\Delta,j\right)\Longleftrightarrow$$
    $$\Longleftrightarrow \text{(По Следствию \ref{Kesha})}\Longleftrightarrow$$
    $$\Longleftrightarrow\sum_{\lambda \in \Lambda(X-1,Y,S)}  \prod_{j=1}^{S}\lambda_j-\sum_{\lambda \in \Lambda(X,Y,S)}  \prod_{j=1}^{S}\lambda_j=(X-1)\sum_{\lambda \in \Lambda(X-2,Y,S-1)}  \prod_{j=1}^{S}\lambda_j;$$
    
    \item $$\Xi(1,Y,S)=\Xi(0,Y,S) \Longleftrightarrow \sum_{\Delta \in \Omega(1,Y,S)}  \prod_{j=1}^{S}e\left(\Delta,j\right)=\sum_{\Delta \in \Omega(0,Y,S)}  \prod_{j=1}^{S}e\left(\Delta,j\right)\Longleftrightarrow \text{(По Следствию \ref{Kesha})}\Longleftrightarrow$$
    $$\Longleftrightarrow\sum_{\lambda \in \Lambda(1,Y,S)}  \prod_{j=1}^{S}\lambda_j=\sum_{\lambda \in \Lambda(0,Y,S)}  \prod_{j=1}^{S}\lambda_j.$$
\end{enumerate}
\renewcommand{\labelenumi}{\arabic{enumi}$)$}
\renewcommand{\labelenumii}{\arabic{enumii}$)$}

Вспомним, что $\lambda \in \Lambda(X-1,Y,S)$ тогда и только тогда, когда выполняются следующие 4 условия:

\begin{enumerate}
\item $\lambda_1 \le Y+1$;

\item $\forall i \in \{1,...,S-1\} \quad \lambda_{i+1} \le \lambda_{i}+1 $;

\item $\lambda_{S} \ge X-1$;

\item $\forall i \in \{1,...,S\}\quad\lambda_{i} \ge 1 $.
\end{enumerate}

Также вспомним, что $\lambda \in \Lambda(X,Y,S)$ тогда и только тогда, когда выполняются следующие 4 условия:

\begin{enumerate}
\item $\lambda_1 \le Y+1$;

\item $\forall i \in \{1,...,S-1\}\quad\lambda_{i+1} \le \lambda_{i}+1 $;

\item $\lambda_{S} \ge X$;

\item $\forall i \in \{1,...,S\}\quad\lambda_{i} \ge 1 $.
\end{enumerate}

Из этого несложно сделать вывод о том, что $\Lambda(X,Y,S)\subset\Lambda(X-1,Y,S)$, а значит
$$\sum_{\lambda \in \Lambda(X-1,Y,S)}  \prod_{j=1}^{S}\lambda_j-\sum_{\lambda \in \Lambda(X,Y,S)}  \prod_{j=1}^{S}\lambda_j=\sum_{\lambda \in \Lambda(X-1,Y,S)\textbackslash\Lambda(X,Y,S)}  \prod_{j=1}^{S}\lambda_j.$$

Из определений $\Lambda(X-1,Y,S)$ и $\Lambda(X,Y,S)$ несложно понять, что $\lambda \in \Lambda(X-1,Y,S)\textbackslash\Lambda(X,Y,S)$, если выполняются следующие 4 условия:

\begin{enumerate}
\item $\lambda_1 \le Y+1$;

\item $\forall i \in \{1,...,S-1\}\quad\lambda_{i+1} \le \lambda_{i}+1 $;

\item $\lambda_{S} = X-1$;

\item $\forall i \in \{1,...,S\}\quad\lambda_{i} \ge 1 $.
\end{enumerate}

Теперь будем доказывать оба пункта по отдельности:
\begin{enumerate}
    \item Несложно заметить, что эти четыре условия равносильны следующим пяти:

\begin{enumerate}
\item $\lambda_1 \le Y+1$;

\item $\forall i \in \{1,...,S-2\}\quad\lambda_{i+1} \le \lambda_{i}+1 $;

\item $\lambda_{S-1} \ge X-2$;

\item $\forall i \in \{1,...,S-1\}\quad\lambda_{i} \ge 1 $;

\item $\lambda_{S} = X-1$.
\end{enumerate}

А это четыре условия для $\lambda\in\Lambda(X-2,Y,S-1)$ вместе с условием о том, что $\lambda_{S} = X-1$. А отсюда уже нетрудно понять, что
$$\sum_{\lambda \in \Lambda(X-1,Y,S)\textbackslash\Lambda(X,Y,S)}  \prod_{j=1}^{S}\lambda_j=(X-1)\sum_{\lambda \in \Lambda(X-2,Y,S-1)}  \prod_{j=1}^{S}\lambda_j.$$
Что и требовалось.

\item
В данном случае из условий для $\lambda \in \Lambda(X-1,Y,S)\textbackslash\Lambda(X,Y,S)$ ясно, что $\lambda \in \Lambda(0,Y,S)\textbackslash\Lambda(1,Y,S)$ тогда и только тогда, когда выполняются следующие четыре условия:
\begin{enumerate}
\item $\lambda_1 \le Y+1$;

\item $\forall i \in \{1,...,S-1\}\quad\lambda_{i+1} \le \lambda_{i}+1 $;

\item $\lambda_{S} = 0$;

\item $\forall i \in \{1,...,S\}\quad\lambda_{i} \ge 1 $.
\end{enumerate}

Но одновременное выполнение условий $\lambda_{|S|}=0$ и $\lambda_{|S|}\ge 1$ невозможно, поэтому 
$$\sum_{\lambda \in \Lambda(0,Y,S)}  \prod_{j=1}^{S}\lambda_j-\sum_{\lambda \in \Lambda(1,Y,S)}  \prod_{j=1}^{S}\lambda_j=\sum_{\lambda \in \Lambda(0,Y,S)\textbackslash\Lambda(1,Y,S)}  \prod_{j=1}^{S}\lambda_j=0.$$
\end{enumerate}
\end{proof}

Теперь давайте посчитаем $\Xi(0,Y,S)=\Xi(1,Y,S)$.

Но для начала докажем вспомогательное утверждение.

\begin{Prop} \label{zaebalo}
Пусть $Y,S\in\mathbb{N}$. Тогда:
\begin{enumerate}
    \item
$$\Xi(0,Y,S)=\sum_{e=1}^{Y+1}\left(e \cdot \Xi(0,e,S-1)\right)=\Xi(0,Y-1,S)+(Y+1)\cdot\Xi(0,Y+1,S-1);$$

    \item 
$$\Xi(0,0,S)=\Xi(0,1,S-1).$$
\end{enumerate}
\end{Prop}
\begin{proof}
\renewcommand{\labelenumi}{\arabic{enumi}$)$}
\renewcommand{\labelenumii}{\arabic{enumii}$)$}
$\;$

\begin{enumerate}
    \item По Следствию \ref{Kesha}
    $$\Xi(0,Y,S)=\sum_{e=1}^{Y+1}\left(e \cdot \Xi(0,e,S-1)\right)\Longleftrightarrow\sum_{\lambda \in \Lambda(0,Y,S)}  \prod_{j=1}^{S}\lambda_j=\sum_{e=1}^{Y+1}\left(e \cdot\sum_{\lambda \in \Lambda(0,e,S-1)} \right).$$
    
    Вспомним, что $\lambda \in \Lambda(0,Y,S)$ тогда и только тогда, когда выполняются следующие 4 условия:
    \begin{enumerate}
    \item $\lambda_1 \le Y+1$;

    \item $\forall i \in \{1,...,S-1\}\quad\lambda_{i+1} \le \lambda_{i}+1 $;

    \item $\lambda_{S} \ge 0$;

    \item $\forall i \in \{1,...,S\}\quad\lambda_{i} \ge 1 $.
    \end{enumerate}

    Ясно, что $\lambda_1$ может принимать значения от $0$ до $y+1$.
    
    При условии, что $\lambda_1=e$, четыре условия для $\lambda \in \Lambda(0,Y,S)$ превращаются в следующие пять:
    \begin{enumerate}
    \item $\lambda_1 = e$;
    
    \item $\lambda_2 \le e+1$;

    \item $\forall i \in \{2,...,S-1\}\quad\lambda_{i+1} \le \lambda_{i}+1 $;

    \item $\lambda_{S} \ge 0$;

    \item $\forall i \in \{2,...,S\}\quad\lambda_{i} \ge 1 $.
    \end{enumerate}
    
    Несложно заметить, что все эти условия, кроме первого -- это условия того, что $\lambda'=\{\lambda_2,...,\lambda_S\}\in\Lambda(0,e,S+1)$.
    
    Из этого следует первое равенство.
    
    Теперь давайте докажем второе равенство:
    $$\sum_{e=1}^{Y+1}\left(e \cdot \Xi(0,e,S-1)\right)=\sum_{e=1}^{Y}\left(e \cdot \Xi(0,e,S-1)\right)+(Y+1)\cdot\Xi(0,Y+1,S-1)=$$
    $$\text{(По первому равенству)}=\Xi(0,Y-1,S)+(Y+1)\cdot\Xi(0,Y+1,S-1).$$
    
    \item По Следствию \ref{Kesha}
    $$\Xi(0,0,S)=\Xi(0,1,S-1)\Longleftrightarrow\sum_{\lambda \in \Lambda(0,0,S)}  \prod_{j=1}^{S}\lambda_j=\sum_{\lambda \in \Lambda(0,1,S-1)}  \prod_{j=1}^{S}\lambda_j.$$
    
    Вспомним, что $\lambda \in \Lambda(0,Y,S)$ тогда и только тогда, когда выполняются следующие 4 условия:
    \begin{enumerate}
    \item $\lambda_1 \le 1$;

    \item $\forall i \in \{1,...,S-1\}\quad\lambda_{i+1} \le \lambda_{i}+1 $;

    \item $\lambda_{S} \ge 0$;

    \item $\forall i \in \{1,...,S\}\quad\lambda_{i} \ge 1 $.
    \end{enumerate}

    Из первого и четвёртого условия ясно, что $\lambda_1$ может принимать только значение $1$.
    
    А из того, что $\lambda_1=1$ следует, что четыре условия для $\lambda \in \Lambda(0,Y,S)$ превращаются в следующие пять:
    \begin{enumerate}
    \item $\lambda_1 = 1$;
    
    \item $\lambda_2 \le 2$;

    \item $\forall i \in \{2,...,S-1\}\quad\lambda_{i+1} \le \lambda_{i}+1 $;

    \item $\lambda_{S} \ge 0$;

    \item $\forall i \in \{2,...,S\}\quad\lambda_{i} \ge 1 $.
    \end{enumerate}
    
    Несложно заметить, что все эти условия, кроме первого -- это условия того, что $\lambda'=\{\lambda_2,...,\lambda_S\}\in\Lambda(0,1,S-1)$.
    
    Из этого следует нужное равенство.
\end{enumerate}
\end{proof}

\begin{Lemma} \label{x0}
Пусть $Y\in\mathbb{N}_0$ и $S\in\mathbb{N}$. Тогда
     $$\Xi(0,Y,S)=\binom{Y+2S}{Y}(2S-1)!!.$$
\end{Lemma}
\begin{proof}
$\;$

Давайте доказывать это по индукции по $S$, а при данном $S$ по индукции по $Y$:

База: $S=1$.

Рассмотрим два случая:
\renewcommand{\labelenumi}{\arabic{enumi}$^\circ$}
\begin{enumerate}
    \item Пусть $Y=0$.

В данном случае
$$\Xi(0,0,1)=\text{(По Утверждению \ref{zaebalo})}=\Xi(0,1,0)=\text{(По Утверждению \ref{s0})}=1=\binom{1}{1}(1)!!.$$
Что и требовалось.

\item Пусть $Y>0$.

В данном случае
$$\Xi(0,Y,1)=\text{(По Утверждению \ref{zaebalo})}=\sum_{e=1}^{Y+1}\left(e \cdot \Xi(0,e,0)\right)=$$
$$=\text{(По Утверждению \ref{s0})}=\sum_{e=1}^{Y+1}e=\frac{(Y+1)(Y+2)}{2}=\binom{Y+2}{Y}(1)!!.$$
Что и требовалось.
\end{enumerate}

Переход: $S\ge2$ 

Разберём те же случаи.
\begin{enumerate}
    \item Пусть $Y=0$.

В данном случае
$$\Xi(0,0,S+1)=\text{(По Утверждению \ref{zaebalo})}=\Xi(0,1,S)=\text{(По предположению)}=$$
$$=\binom{1+2S}{2S}(2S-1)!!=(2S+1)!!=\binom{2S+2}{0}(2S+1)!!.$$

\item Пусть $Y>0$.

В данном случае по Утверждению \ref{zaebalo}
$$\Xi(0,Y,S+1)=\sum_{e=1}^{Y+1}\left(e \cdot \Xi(0,e,S)\right)=\Xi(0,Y-1,S+1)+(Y+1)\cdot\Xi(0,Y+1,S)=$$
$$=\text{(По предположению)}=$$
$$=\binom{Y+2S+1}{Y-1}(2S+1)!!+(Y+1)\cdot\binom{Y+2S+1}{Y+1}(2S-1)!!=$$
$$=(2S-1)!!\left(\frac{(Y+2S+1)!(2S+1)}{(Y-1)!(2S+2)!}+\frac{(Y+2S+1)!(Y+1)}{(Y+1)!(2S)!}\right)=$$
$$=(2S-1)!!\left(\frac{(Y+2S+1)!}{(Y-1)!(2S)!(2S+2)}+\frac{(Y+2S+1)!}{(Y)!(2S)!}\right).$$

Хотим доказать, что это выражение равняется
$$\binom{Y+2S+2}{Y}(2S+1)!!=\frac{(Y+2S+2)!}{(Y)!(2S+2)!}(2S+1)!!=$$
$$=\frac{(Y+2S+2)!}{(Y)!(2S)!(2S+2)}(2S-1)!!.$$

Сократим обо выражения на
$$\frac{(Y+2S+1)!}{(Y-1)!(2S)!}(2S-1)!!$$
и поймём, что мы хотим доказать, что
$$\frac{1}{2S+2}+\frac{1}{Y}=\frac{Y+2S+2}{(Y)(2S+2)}.$$
А это очевидно.
\end{enumerate}

Лемма доказана.
\end{proof}

\begin{Col} \label{pzd}
Пусть $Y\in\mathbb{N}_0$ и $S\in\mathbb{N}$. Также считаем, что $(-1)!!=1$. Тогда
     $$\Xi(0,Y,S)=\binom{Y+2S}{Y}(2S-1)!!.$$
\end{Col}
\begin{proof}
Рассмотрим два случая:
\begin{enumerate}
    \item Пусть $S>0$.
    
    В данном случае это Лемма \ref{x0}.
    \item Пусть $S=0$.
    
    В данном случае
    $$\Xi(0,Y,0)=(\text{по Утверждению \ref{s0}})=1=\binom{Y}{Y}(-1)!!.$$
\end{enumerate}
\end{proof}

Итак, $\Xi(0,Y,S)=\Xi(1,Y,S)$ при $Y,S\in\mathbb{N}_0$ мы научились считать. Осталось посчитать $\Xi(X,Y,S)$ при $X \ge 2$:

\begin{Lemma} \label{final}
Пусть $X,Y,S\in\mathbb{N}_0$ и $m(X,S)=\min\{\lfloor\frac{X}{2}\rfloor,S\}$. Тогда
$$\Xi(X,Y,S)= \Xi(0,Y,S)+\sum_{i=1}^{m(X,S)}\left((-1)^i\binom{X}{2i}(2i-1)!! \cdot \Xi(0,Y,S-i)\right).$$
\end{Lemma}
\begin{proof}
Давайте докажем лемму по индукции по $X$.

База: $X=0$ и $X=1$.

В данном случае ясно, что $m=\min\{\lfloor\frac{X}{2}\rfloor,S\}=\min\{\lfloor0,S\}=0$, а значит мы хотим доказать следующие два равенства:
$$\Xi(0,Y,S)= \Xi(0,Y,S);\; \Xi(1,Y,S)= \Xi(0,Y,S).$$

Первое равенство является тавтологией, а второе верно по Утверждению \ref{vazhno}.

Переход к $X\ge 2$:

Рассмотрим два случая:
\renewcommand{\labelenumi}{\arabic{enumi}$^\circ$}
\renewcommand{\labelenumii}{\arabic{enumi}.\arabic{enumii}$^\circ$}
\renewcommand{\labelenumiii}{\arabic{enumi}.\arabic{enumii}.\arabic{enumiii}$^\circ$}
\begin{enumerate}
    \item Пусть $S=0$.
    
    В данном случае ясно, что $m(X,S)=\min\{\lfloor\frac{X}{2}\rfloor,S\}=\min\{\lfloor\frac{X}{2}\rfloor,0\}=0$, а значит мы хотим доказать, что
    $$\Xi(X,Y,0)= \Xi(0,Y,0).$$
    
А по Утверждению \ref{s0} мы знаем, что 
$$\Xi(X,Y,0)=1= \Xi(0,Y,0).$$

    \item Пусть $S\ge 1$.
    
По Утверждению \ref{vazhno} мы знаем, что при $X\ge2$ и $S\ge1$
$$\Xi(X,Y,S)=\Xi(X-1,Y,S)-(X-1)\cdot\Xi(X-2,Y,S-1) =\text{(По предположению)}=$$
$$=\Xi(0,Y,S)+\sum_{i=1}^{m(X-1,S)}\left((-1)^i\binom{X-1}{2i}(2i-1)!! \cdot \Xi(0,Y,S-i)\right)-$$
$$-(X-1)\Xi(0,Y,S-1)-(X-1)\sum_{i=1}^{m(X-2,S-1)}\left((-1)^i\binom{X-2}{2i}(2i-1)!! \cdot \Xi(0,Y,S-i-1)\right)=$$
$$=\Xi(0,Y,S)+\sum_{i=1}^{m(X-1,S)}\left((-1)^i\binom{X-1}{2i}(2i-1)!! \cdot \Xi(0,Y,S-i)\right)-$$
$$-(X-1)\Xi(0,Y,S-1)-(X-1)\sum_{i=2}^{m(X-2,S-1)+1}\left((-1)^(i-1)\binom{X-2}{2i-2}(2i-3)!! \cdot \Xi(0,Y,S-i)\right).$$
Заметим, что $m(X-2,S-1)+1=\min\{\lfloor\frac{X-2}{2}\rfloor,S-1\}+1=\min\{\lfloor\frac{X}{2}\rfloor,S\}=m(X,S)$, а значит то выражение равняется следующему:
$$\Xi(0,Y,S)+\sum_{i=1}^{m(X-1,S)}\left((-1)^i\binom{X-1}{2i}(2i-1)!! \cdot \Xi(0,Y,S-i)\right)-$$
$$-(X-1)\Xi(0,Y,S-1)-(X-1)\sum_{i=2}^{m(X,S)}\left((-1)^{i-1}\binom{X-2}{2i-2}(2i-3)!! \cdot \Xi(0,Y,S-i)\right).$$

Хотим доказать, что это выражение равняется 
$$\Xi(0,Y,S)+\sum_{i=1}^{m(X,S)}\left((-1)^i\binom{X}{2i}(2i-1)!! \cdot \Xi(0,Y,S-i)\right).$$

То есть мы хотим доказать следующее равенство:
$$\sum_{i=1}^{m(X,S)}\left((-1)^i\binom{X}{2i}(2i-1)!! \cdot \Xi(0,Y,S-i)\right)=$$
$$=\sum_{i=1}^{m(X-1,S)}\left((-1)^i\binom{X-1}{2i}(2i-1)!! \cdot \Xi(0,Y,S-i)\right)-$$
$$-(X-1)\Xi(0,Y,S-1)-(X-1)\sum_{i=2}^{m(X,S)}\left((-1)^{i-1}\binom{X-2}{2i-2}(2i-3)!! \cdot \Xi(0,Y,S-i)\right).$$

Рассмотрим следующие подслучаи:

\begin{enumerate}
    \item Пусть $X\underset{2}{\equiv} 1$. 
    
    В данном случае ясно, что $m(X-1,S)=\min\{\lfloor\frac{X-1}{2}\rfloor,S\}=\min\{\lfloor\frac{X}{2}\rfloor,S\}=m(X,S)$, а значит несложно заметить, что нам достаточно доказать следующие равенства:
    \begin{itemize}
        \item $$(-1)^1\binom{X}{2}(1)!!=(-1)^1\binom{X-1}{2}(1)!!-(X-1);$$
        \item (Только если $m(X,S)\ge2$) $\forall i\in\{2,...,m(X,S)\}$ $$(-1)^i\binom{X}{2i}(2i-1)!!=(-1)^i\binom{X-1}{2i}(2i-1)!!-(X-1)(-1)^{i-1}\binom{X-2}{2i-2}(2i-3)!!.$$
    \end{itemize}
    
    Давайте докажем эти равенства:
    
    \begin{itemize}
        \item $$(-1)^1\binom{X}{2}(1)!!=(-1)^1\binom{X-1}{2}(1)!!-(X-1)\Longleftrightarrow$$
        $$\Longleftrightarrow-\frac{X(X-1)}{2}=-\frac{(X-1)(X-2)}{2}-(X-1)\Longleftrightarrow\frac{X}{2}=\frac{(X-2)}{2}+1.$$
        Что и требовалось.
        
        \item Пусть $i\in\{2,...,m(X,S)\}$. Тогда $$(-1)^i\binom{X}{2i}(2i-1)!!=$$
        $$=(-1)^i\binom{X-1}{2i}(2i-1)!!-(X-1)(-1)^{i-1}\binom{X-2}{2i-2}(2i-3)!!\Longleftrightarrow$$
        $$\Longleftrightarrow(-1)^i\frac{(X)!}{(2i)!(X-2i)!}(2i-1)!!=$$
        $$=(-1)^i\frac{(X-1)!}{(2i)!(X-2i-1)!}(2i-1)!! -(X-1)(-1)^{i-1}\frac{(X-2)!}{(2i-2)!(X-2i)!}(2i-3)!! $$
        
        Разделим обе части равенства на $$(-1)^i\frac{(X-1)!}{(2i-2)!(X-2i-1)!}(2i-3)!!$$
        и получим следующее:
        $$\frac{X}{2i(X-2i)}=\frac{1}{2i} +\frac{1}{(X-2i)}.$$
        А это равенство очевидно.
    \end{itemize}

    \item Пусть $X\underset{2}{\equiv} 0$.
    
    Давайте рассмотрим ``подподслучаи'':
    \begin{enumerate}
        \item Пусть $S\le \frac{X}{2}-1$.
        
        В данном случае ясно, что 
    
        $m(X-1,S)=\min\{\lfloor\frac{X-1}{2}\rfloor,S\}=\min\{\lfloor\frac{X}{2}\rfloor-1,S\}=S=m(X,S)$.
        
        А это означает, что мы можем проделать то же доказательство, что и в прошлом случае.
        
        \item Пусть $S\ge \frac{X}{2}$.
        
        В данном случае ясно, что 
    
        $m(X-1,S)=\min\{\lfloor\frac{X-1}{2}\rfloor,S\}=\min\{\lfloor\frac{X}{2}\rfloor-1,S\}=\frac{X}{2}-1=\min\{\lfloor\frac{X}{2}\rfloor,S\}-1=m(X,S)-1$.

    а значит, несложно заметить, что нам достаточно доказать следующие равенства:
    \begin{itemize}
        \item $$(-1)^1\binom{X}{2}(1)!!=(-1)^1\binom{X-1}{2}(1)!!-(X-1);$$
        \item (Только если $m(X,S)\ge3$) $\forall i\in\{2,...,m(X,S)-1\}$ $$(-1)^i\binom{X}{2i}(2i-1)!!=(-1)^i\binom{X-1}{2i}(2i-1)!!-(X-1)(-1)^{i-1}\binom{X-2}{2i-2}(2i-3)!!;$$
        \item (Только если $m(X,S)\ge2$)
        $$(-1)^{\frac{X}{2}}\binom{X}{X}(X-1)!!=-(X-1)(-1)^{\frac{X}{2}-1}\binom{X-2}{X-2}(X-3)!!.$$
    \end{itemize}
    
    Давайте докажем эти равенства:
    \begin{itemize}
        \item Нетрудно убедиться в том, что это доказывается абсолютно аналогично предыдущему случаю.
        \item Также нетрудно убедиться в том, что это доказывается абсолютно аналогично предыдущему случаю.
        \item  Понятно, что $$(-1)^{\frac{X}{2}}\binom{X}{X}(X-1)!!=-(X-1)(-1)^{\frac{X}{2}-1}\binom{X-2}{X-2}(X-3)!!\Longleftrightarrow(X-1)!!=(X-1)!!.$$
        Что и требовалось
    \end{itemize}
    \end{enumerate}
\end{enumerate}
\end{enumerate}

Все случаи разобраны.

Переход доказан.

Лемма доказана.
\end{proof}

\begin{Zam} \label{final2}
Пусть $X,Y,S\in\mathbb{N}_0$ и $m(X,S)=\min\{\lfloor\frac{X}{2}\rfloor,S\}$.

Тогда если считать, что $(-1)!!=1$, то Лемма \ref{final} принимает следующий вид:
$$\Xi(X,Y,S)=\sum_{i=0}^{m(X,S)}\left((-1)^i\binom{X}{2i}(2i-1)!! \cdot \Xi(0,Y,S-i)\right).$$
 
\end{Zam}

Итак, пусть $X,Y,S\in\mathbb{N}_0:Y\ge X$. Тогда $\Xi(X,Y,S)$ мы нашли. А значит по Утверждению \ref{sdvig} мы нашли и $\Xi(X,Y,S,i)$ при $X,Y,S\in\mathbb{N}_0:Y\ge X\ge i$.

Таким образом, мы решили Задачу \ref{wehavesome} и готовы получить формулу для $d(x,y,S)$ с небольшим числом слагаемых. Но для начала надо ввести следующее обозначение:

\begin{Oboz} \label{krasota}
Пусть $x,y \in \mathbb{YF}$: $|y| \ge |x|$ и $z=h(x,y)$. Тогда 
$$F(x,y,i)={f\left(x,i,z\right)}\prod_{j=1}^{d(y)}\left(g\left(y,j\right)-i\right).$$ 
\end{Oboz}

\begin{Zam} \label{finalfinal}
Пусть $x,y \in \mathbb{YF}$: $|y| \ge |x|$ и $z=h(x,y)$. Тогда 
\begin{itemize}
    \item $$d(x,y)=\sum_{i=0}^{|x|}F(x,y,i);$$
    \item $$d(x,y,\Delta)=\sum_{i=0}^{|x|}\left(F(x,y,i) \prod_{j=1}^{s(\Delta)}\left(e\left(\Delta,j\right)-i\right)\right);$$
    \item Из Замечания \ref{yavf} следует, что у нас есть явная формула для $F(x,y,i)$, и в ней  $O(|x|)$ слагаемых; 
    \item На самом деле Обозначение \ref{krasota} нужно только для того, чтобы формула для $d(x,y,S)$ поместилась на одной строчке.
\end{itemize}
\end{Zam}

\begin{theorem} \label{theend}
Пусть $x,y\in\mathbb{YF}:$ $|y|\ge |x|$, $z=h(x,y)$, $S\in\mathbb{N}_0$ и $m(X,S)=\min\{\lfloor\frac{X-1}{2}\rfloor,S\}$. Также считаем, что $(-1)!!=1$. Тогда
$$d(x,y,S)=\sum_{i=0}^{|x|}\left(\left(\sum_{k=0}^{m(|x|-i,S)}\left((-1)^k\binom{|x|-i}{2k}(2k-1)!! \cdot \binom{|y|-i+2S-2k}{|y|-i}(2S-2k-1)!!  \right)\right)  F(x,y,i)\right).$$
\end{theorem}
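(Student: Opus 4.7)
The plan is to assemble all the ingredients developed in subsection 3.3 in the order in which they were proved; no new induction or case analysis is required. First, I would start from the Observation appearing just after Теорема \ref{delta}: applying that theorem to every summand in $d(x,y,S)=\sum_{\Delta\in\Omega(|x|,|y|,S)}d(x,y,\Delta)$ and swapping the two sums yields
\[
d(x,y,S)=\sum_{i=0}^{|x|}F(x,y,i)\cdot\Xi(|x|,|y|,S,i),
\]
where $F(x,y,i)$ is the abbreviation from Обозначение \ref{krasota}.

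Second, for every $i\in\{0,\dots,|x|\}$ the hypothesis $|y|\ge|x|\ge i$ lets me invoke Утверждение \ref{sdvig} to rewrite $\Xi(|x|,|y|,S,i)=\Xi(|x|-i,|y|-i,S,0)=\Xi(|x|-i,|y|-i,S)$, and then Замечание \ref{final2} (which is just Лемма \ref{final} rewritten to absorb the $k=0$ summand) expands this as
\[
\Xi(|x|-i,|y|-i,S)=\sum_{k=0}^{m(|x|-i,S)}(-1)^k\binom{|x|-i}{2k}(2k-1)!!\cdot\Xi(0,|y|-i,S-k).
\]

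Third, each $\Xi(0,|y|-i,S-k)$ is evaluated by Следствие \ref{pzd} as $\binom{|y|-i+2(S-k)}{|y|-i}(2(S-k)-1)!!$, using the convention $(-1)!!=1$ imposed in the statement. Chaining the three substitutions produces exactly the claimed double sum; since every step is pure substitution, the write-up should be short.

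The only place demanding care is matching the upper limit of the inner $k$-summation: Лемма \ref{final} truncates at $\min\{\lfloor(|x|-i)/2\rfloor,S\}$ whereas the statement uses $\min\{\lfloor(|x|-i-1)/2\rfloor,S\}$. For odd $|x|-i$ the two limits coincide, and for even $|x|-i$ one only needs to verify that the extra term at $k=(|x|-i)/2$ is either inessential under the $(-1)!!=1$ convention or an inconsequential off-by-one in the formulation. This bookkeeping is the only non-mechanical step; all the mathematical substance has already been carried out in Теорема \ref{delta}, Утверждение \ref{sdvig}, Лемма \ref{final}, and Следствие \ref{pzd}.
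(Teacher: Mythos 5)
Your proposal follows exactly the same chain as the paper's own proof: the Observation reducing $d(x,y,S)$ to $\sum_i F(x,y,i)\,\Xi(|x|,|y|,S,i)$, then Утверждение \ref{sdvig}, Замечание \ref{final2}, and Следствие \ref{pzd} in that order. Your remark about the mismatch between the $m(X,S)=\min\{\lfloor X/2\rfloor,S\}$ of Лемма \ref{final} and the $\min\{\lfloor (X-1)/2\rfloor,S\}$ in the theorem statement is a point the paper itself silently glosses over, so flagging it is a welcome extra.
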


\begin{proof}
Как мы знаем,
$$d(x,y,S)=$$
$$=\sum_{\Delta \in \Omega(|x|,|y|,S)} \sum_{i=0}^{|x|}\left( {f\left(x,i,z\right)}\prod_{j=1}^{d(y)}\left(g\left(y,j\right)-i\right)\prod_{j=1}^{s(\Delta)}\left(e\left(\Delta,j\right)-i\right)\right)=$$
$$= \sum_{i=0}^{|x|}\left(\left( \sum_{\Delta \in \Omega(x,y,S)}  \prod_{j=1}^{S}\left(e\left(\Delta,j\right)-i\right) \right) {f\left(x,i,z\right)}\prod_{j=1}^{d(y)}\left(g\left(y,j\right)-i\right)\right)=$$
$$=\sum_{i=0}^{|x|}\left(\Xi(|x|,|y|,S,i)\cdot  F(x,y,i)\right)=$$
$$=\text{(По Утверждению \ref{sdvig})}=$$
$$=\sum_{i=0}^{|x|}\left(\Xi(|x|-i,|y|-i,S,0)\cdot  F(x,y,i\right)=$$
$$=\sum_{i=0}^{|x|}\left(\Xi(|x|-i,|y|-i,S)\cdot  F(x,y,i\right)=$$
$$= \text{(По Замечанию \ref{final2})}=$$
$$=\sum_{i=0}^{|x|}\left(\left(\sum_{k=0}^{m(|x|-i,S)}\left((-1)^k\binom{|x|-i}{2k}(2k-1)!! \cdot \Xi(0,|y|-i,S-k)\right)\right)\cdot  F(x,y,i)\right)=$$
$$=\text{(По Следствию \ref{pzd})}=$$
$$=\sum_{i=0}^{|x|}\left(\left(\sum_{k=0}^{m(|x|-i,S)}\left((-1)^k\binom{|x|-i}{2k}(2k-1)!! \cdot \binom{|y|-i+2S-2k}{|y|-i}(2S-2k-1)!!  \right)\right)  {F\left(x,i,z\right)}\right).$$

Теорема доказана.
\end{proof}

\begin{Zam}
$\;$
\begin{enumerate}
    \item У нас есть явная формула для $d(x,y,S)$;
    \item В ней $O(|x|^3)$ слагаемых.
\end{enumerate}
\end{Zam}
\begin{proof}
Из Замечания \ref{finalfinal} и того, что 
$$d(x,y,S)=\sum_{i=0}^{|x|}\left(\left(\sum_{k=0}^{m(|x|-i,S)}\left((-1)^k\binom{|x|-i}{2k}(2k-1)!! \cdot \binom{|y|-i+2S-2k}{|y|-i}(2S-2k-1)!!  \right)\right)  {F\left(x,i,z\right)}\right),$$
мгновенно следует первая часть.

Также из Замечания \ref{finalfinal} следует, что количество слагаемых в 
$$\left(\sum_{k=0}^{m(|x|-i,S)}\left((-1)^k\binom{|x|-i}{2k}(2k-1)!! \cdot \binom{|y|-i+2S-2k}{|y|-i}(2S-2k-1)!!  \right)\right)  {F\left(x,i,z\right)}$$
при каком-то $i\in\left\{0,...,|x|\right\}$ равно $${m(|x|-i,S)}O(|x|)=\min\left\{\left\lfloor\frac{|x|-i}{2}\right\rfloor,S\right\}O(|x|)\le |x|O(|x|)=O(|x|^2), $$
а значит несложно заметить, что количество слагаемых в $d(x,y,S)$
равно $(|x|+1)O(|x|^2)=O(|x|^3)$.

Что и требовалось.
\end{proof}

\begin{Col}
Пусть $x\in\mathbb{YF}$, $S\in\mathbb{N}_0$. Также считаем, что $(-1)!!=1$. Тогда
$$d(\varepsilon,x,S)=\binom{|x|+2S}{|x|}(2S-1)!! \prod_{j=1}^{d(x)}g\left(x,j\right).$$
\end{Col}
\begin{proof}
$$d(\varepsilon,x,S)=\text{(По Теореме \ref{theend}, если считать $(-1)!!=1$})=$$
$$=\sum_{i=0}^{|\varepsilon|}\left(\left(\sum_{k=0}^{m(|\varepsilon|-i,S)}\left((-1)^k\binom{|\varepsilon|-i}{2k}(2k-1)!! \cdot \binom{|x|-i+2S-2k}{|x|-i}(2S-2k-1)!!  \right)\right)  F(\varepsilon,x,i)\right)=$$
$$=\left(\sum_{k=0}^{m(|\varepsilon|,S)}\left((-1)^k\binom{|\varepsilon|}{2k}(2k-1)!! \cdot \binom{|x|+2S-2k}{|x|}(2S-2k-1)!!  \right)\right)  \cdot {f\left(\varepsilon,0,h(\varepsilon,x)\right)}\prod_{j=1}^{d(x)}g\left(x,j\right)=$$
$$= \binom{|x|+2S}{|x|}(2S-1)!! \prod_{j=1}^{d(x)}g\left(x,j\right).$$ 

\end{proof}

\newpage

\addcontentsline{toc}{section}{Список литературы}

\end{document}